\newcommand{\equinf}[1]{\ \underset{#1\to+\infty}{\mathlarger{\mathlarger{{\sim}}}}\ }
\newcommand{\Nat}{\mathbb N}
\newcommand{\R}{\mathbb R}
\newcommand{\Esp}{\mathbb E}
\newcommand{\p}{\mathbb P}
\newcommand{\Einf}[1]{\Esp_\infty\left(#1\right)}
\newcommand{\Vinf}[1]{\Var_\infty\left(#1\right)}
\newcommand{\pinf}[1]{\p_\infty\!\left(#1\right)}
\newcommand{\linf}[1]{\underset{#1\to+\infty}\longrightarrow}
\newcommand{\1}[1]{\mathbf{1}\!_{\left\{#1\right\}}}
\newcommand{\EE}[1]{\Esp\left(#1\right)}
\newcommand{\Ek}[1]{\Esp_k\left(#1\right)}
\newcommand{\as}{\quad\mathrm{ a.s.}}
\newcommand{\eps}{\varepsilon}
\newcommand{\dif}{\mathrm{d}}
\newcommand{\Var}{\mathrm{Var}}
\newcommand{\me}{\medskip \noindent}
\newcommand{\bi}{\bigskip \noindent}
\def\be{\begin{eqnarray}}
\def\ee{\end{eqnarray}}
\def\ben{\begin{eqnarray*}}
\def\een{\end{eqnarray*}}
\newtheorem{prop}{Proposition}[section]
\newtheorem{defi}[prop]{Definition}
\newtheorem{lem}[prop]{Lemma}
\newtheorem{thm}[prop]{Theorem}
\newcounter{example}
\newenvironment {example}
		{\refstepcounter{example}
		 {\noindent\emph{Example \arabic{example}.} }}
		{\medskip}
\title{How do  birth and death processes come down from infinity?}
\author{Vincent Bansaye\thanks{CMAP, Ecole Polytechnique, CNRS, route de
    Saclay, 91128 Palaiseau Cedex-France; E-mail: \href{mailto:vincent.bansaye@polytechnique.edu}{\texttt{vincent.bansaye@polytechnique.edu}}}, Sylvie M\'el\'eard\thanks{CMAP, Ecole Polytechnique, CNRS, route de
    Saclay, 91128 Palaiseau Cedex-France; E-mail: \href{mailto:sylvie.meleard@polytechnique.edu}{\texttt{sylvie.meleard@polytechnique.edu}}},
     Mathieu Richard\thanks{CMAP, Ecole Polytechnique, CNRS, route de
    Saclay, 91128 Palaiseau Cedex-France; E-mail: \href{mailto:mathieu.richard@cmap.polytechnique.fr}{\texttt{mathieu.richard@cmap.polytechnique.fr}}}}
\begin{document}
\maketitle

\begin{abstract}
We finely describe the "coming down from infinity" for  birth and death processes which eventually become extinct.  Our biological motivation is   to study the decrease of regulated populations  which are initially large.
 Under general assumptions on the birth and death rates, we describe the  behavior of the hitting time of large integers. We let two regimes appear and   derive an  expression of the  speed of coming down from infinity. In the case of  death rates  with regular variations, we also get a central limit theorem and the asymptotic  probability of extinction in small times.  
Finally, we apply our results to  birth and death processes in varying environment in whose the environment influences the competition.
\end{abstract}

\noindent\emph{Key words:} Birth and death processes, Coming down from infinity, Law of large numbers, Central limit theorem, Extinction probability.\\

\noindent\emph{MSC 2010:} 60J27, 60J75, 60F15, 
60F05, 60F10, 92D25.

\section{Introduction}

Our goal in this paper is to finely describe the "coming down from infinity" for a birth and death process which eventually becomes extinct. Our motivations come from the study of population dynamics with  initially large populations. In particular we wish to describe the effect of the competition in large populations and  specify persistence criteria in a possibly varying environment. For this purpose, we decompose  the trajectory of the process with respect to the hitting times of large integers and we use some asymptotic results about sums of independent random variables and Tauberian theorems. 

\me
The population size is modeled by  a birth and death process   $(X(t),t\geq0)$  whose birth rate (resp. death rate) at state $n\in \mathbb{N}$ is $\,\lambda_n$ (resp. $\mu_n$).
In the whole paper, we assume that $\lambda_n$ are nonnegative  and $\mu_n$ are positive  for $n\geq1$ and that   $\mu_0=\lambda_0=0$. The latter implies that $0$ is an absorbing state.
These processes have been extensively studied from the pioneering works on birth and death processes \cite{KarlinMcGregor57} and on the quasi-stationary distribution \cite{Doorn1991}.

\me
It is well known \cite{KarlinMcGregor57, Karlin1975}  that
\begin{equation}\label{condition_extinction}
\sum_{i\geq1}\frac{1}{\lambda_i\pi_i}=\infty
\end{equation}
is a necessary and sufficient 
 condition for  almost sure absorption of the process at $0$, where  for $n\geq1$,
\begin{equation*}
\label{defi_pi}
\pi_n:=\frac{\lambda_1\cdots\lambda_{n-1}}{\mu_1\cdots\mu_n}.
\end{equation*}

\me 
Under \eqref{condition_extinction}, we let the initial population size go to infinity and focus on the case where the limiting process comes back to finite values in finite time. This behavior is captured
by the notion of "coming down from infinity".   Characterizations of the coming down from infinity have been given in \cite{Anderson1991, Cattiaux2009}. They rely on  the convergence of the  series  
\be
\label{cv_{S}}
S:=\sum_{n=1}^{+\infty}\frac{1}{\lambda_n\pi_n}\sum_{i\geq n+1}\pi_i \ < + \infty
\ee
or on the finiteness of the first moment of time of absorption, uniformly in the initial condition.
  As proved in Van Doorn \cite{Doorn1991}, this is also equivalent to the existence and uniqueness of the quasi-stationary distribution  at $0$.  In particular, the uniqueness of the quasi-stationary distribution is deeply related to the way the process comes back into compact sets,  see \cite{meleard_villemonais} or \cite{Cattiaux2009}.

\me
In Section \ref{section_CDI}, we improve this result by an additional exponential moment condition linked to the Lyapounov function $J(n)= \sum_{k=1}^{n-1}\frac{1}{\lambda_k\pi_k}\sum_{i\geq k+1}\pi_i$.

\me We 
 go further in the description of the coming down from infinity, under a slightly more restrictive  
  assumption than \eqref{condition_extinction} : 
\begin{equation}
 \label{condition_limite}
\frac{\lambda_n}{\mu_{n}}\linf{n}l<1.
\end{equation}
 \noindent That allows us to rigorously define  the law $\mathbb{P}_{\infty}$ of the process starting from infinity by a tightness argument. Assumption \eqref{condition_limite} is satisfied by the parameters of the classical models motivated by ecology, including competition between individuals or Allee effect. 
 We also need the following technical assumption  on the death rate to obtain the asymptotic behavior of the integers hitting times : 
\begin{equation}\label{condition_croissance}
\sup_{k,n\geq1}\frac{\mu_n}{\mu_{n+k}}<\infty.
\end{equation}
Two interesting classes of sequences fulfill condition \eqref{condition_croissance}: the death rates which are non-decreasing for large enough integers and the regularly varying death rates (see Section \ref{fonctions_variation_reguliere} in Appendix  for  definitions). \\
When conditions \eqref{condition_limite} and \eqref{condition_croissance} are both satisfied, it is easy to check (see Lemma \ref{equivalence_CDI} in the Appendix) that 
 the series $S$ is finite if and only if 
\begin{equation}\label{condition_CDI_mu}
\sum_{i\geq1} \frac{1}{\mu_i}<+\infty.
\end{equation}

Under the assumptions  \eqref{condition_limite}, \eqref{condition_croissance}  and \eqref{condition_CDI_mu}, we study in Section \ref{section_comportement_T_n} the asymptotic behavior of the decreasing sequence $\,(T_{n})_{n}\,$ of hitting times, defined by
 $$
 T_{n}= \inf\{t\geq 0, X(t)=n\}.$$
Then, \eqref{condition_extinction} and \eqref{cv_{S}} are satisfied, $X$ comes down from infinity a.s. at time $0$. We put in light two different regimes which depend on whether the mean time to go from  $n+1$ to $n$  is negligible or not compared to the mean time to reach $n$. We are then able to give the asymptotic behavior of $X$ for small times.  We show that the  speed of coming down from infinity is obtained from a deterministic decreasing function $t\mapsto v(t)$  tending to infinity at $0$ and  defined as the generalized inverse of the mapping $n\mapsto \mathbb{E}_{\infty}(T_{n}) = S - J(n-1)$.
More precisely, our main result (Section \ref{SCDI}) ensures that 
  \begin{equation*}
  \lim_{t\to0}\frac{X(t)}{v(t)}=1,
 \end{equation*}
where the convergence is either  in probability  or almost surely,  depending on the respective asymptotic behaviors of the birth and death rates. For that, 
   we need to control the trajectory of the process between two successive times $T_n$. We also require that  the rates $\mu_n$ are regularly varying to
   get the a.s. convergence. In addition, we derive in that case a central limit theorem (Theorem \ref{TLC_X}) and  the probability to be absorbed exceptionally fast (Theorem  \ref{descente_rapide}). The proofs rely on  a central limit theorem  for the sum of independent random variables, some Tauberian results  and  coupling arguments. Roughly speaking, we prove that
  $$  \mathbb P_{\infty}(T_0\leq t) \quad \hbox{ behaves as } \ \exp\left(-t^{\frac{1}{1-\rho}}\right),$$
  as $t$ tends to $0$,
where $\rho$ is the index of the variations of $(\mu_n)_n$. 

These results apply in particular to the logistic birth and death process and to  the Kingman coalescent. 
In both cases, we  improve the known results on the coming down from infinity. \\
Lambert \cite{Lambert2005} characterizes the distribution of the  absorption time for the logistic branching process  starting from infinity. Our work extends  to very general death rates, as polynomial increase, which are motivated by ecological data for competition of species, see e.g. Sibly and al \cite{Sibly22072005}.\\
 The proof of the speed of coming down from infinity for Kingman coalescent  has already been obtained in Aldous \cite{Aldous99}.
We complete this result by    estimating the probability that the most recent common ancestor is achieved very fast. Our proof also suggests the way this rare event occurs by considering the associated successive coalescent times.  

Section \ref{application} is devoted to the main application of our results, which is an extinction criterion for time inhomogeneous birth and death processes. These processes have been studied in the framework of randomly varying environment, as described in Cogburn and Torrez  \cite{Cogburn1981}, \cite{Torrez1978}.
Our results allow to obtain extinction results in cases where the environment can be both unfavorable during some periods and favorable during the rest of the time. We quantify the minimal duration of the unfavorable environmental periods leading to eventual extinction. For example, this problem is   relevant  in epidemiology, when the environment influences the parameters of the infection (see Bacaer-Dads \cite{BacaerDads}
and 
van den Broek-Heesterbeek  \cite{BroekHees}). The proof relies on the evaluation of the probability of extinction for time homogeneous birth and death processes  starting from $\infty$ given  above.
\me

The paper is organized as follows. In the next section, we work under the extinction assumption \eqref{condition_extinction} and gather general characterizations of the coming down from infinity, such as \eqref{cv_{S}}. 
Focusing on the subclass of birth and death processes satisfying \eqref{condition_limite}, \eqref{condition_croissance}  and \eqref{condition_CDI_mu} in Section \ref{section_comportement_T_n}, we  describe the hitting time  of large integers
when the process starts from infinity. Under some additional assumption, we can then derive in Section \ref{section_comportement_X} the asymptotic behavior of $X(t)$ when $t$ is small. Moreover, when the death rate has regular variations, we can also
quantify the probability of extinction before a small time (Section \ref{section_fast_extinction}) and the minimal time of competition leading to extinction in varying environment (Section \ref{application}). Finally in Appendix, we illustrate our results by several examples and give some useful details on regularly varying functions.

\section{Coming down from infinity}\label{section_CDI}

\me We first focus on the time spent by the process $(X(t), t\geq 0)$  to go from level $n+1$ to level $n$. We introduce the notation
$$\tau_n:=\inf\{t>T_{n+1};X(t)=n\}-T_{n+1},
\quad G_n(a):=\EE{\exp(-a\tau_n)}, \  (a>0).$$
By the strong Markov property, $\tau_n$ has the law of $T_{n}$ under $\p_{n+1}$ and the random variables $(\tau_n)_{n\geq0}$ are independent.

\begin{prop}
\label{expect-taun}
For each $n\in \mathbb{N}$, we have
\be
\label{moment_{tn}}
\mathbb{E}(\tau_{n}) =  \Esp_{n+1}(T_{n}) = \frac{1}{\lambda_{n}\pi_{n}}\sum_{i\geq n+1}\pi_i
\ee
and for every $a>0$,
 \begin{equation}\label{recurrence_Gn}
G_{n}(a)=\mathbb{E}_{n+1}(\exp(-aT_n))=1+\frac{\mu_n+a}{\lambda_n}-\frac{\mu_n}{\lambda_n}\frac{1}{G_{n-1}(a)}.
 \end{equation}
\end{prop}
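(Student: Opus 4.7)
The plan is to carry out a one-step analysis based on the strong Markov property, treating the two identities in parallel.

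For the Laplace transform recursion, I would start the process at $n$ and decompose $\tau_{n-1}$ according to the first jump. The holding time $T$ at $n$ is exponential with parameter $\lambda_n+\mu_n$, so $\mathbb{E}(e^{-aT})=(\lambda_n+\mu_n)/(\lambda_n+\mu_n+a)$. With probability $\mu_n/(\lambda_n+\mu_n)$ the chain jumps down to $n-1$ and $\tau_{n-1}=T$; with probability $\lambda_n/(\lambda_n+\mu_n)$ it jumps up to $n+1$, and by the strong Markov property the remaining time to hit $n-1$ decomposes as $\tau_n+\tau'_{n-1}$, where $\tau_n$ and $\tau'_{n-1}$ are independent and $\tau'_{n-1}$ is an independent copy of $\tau_{n-1}$. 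Using the independence of the holding time, the jump direction and the post-jump evolution, this yields
$$G_{n-1}(a) \;=\; \frac{\mu_n}{\lambda_n+\mu_n+a} \;+\; \frac{\lambda_n}{\lambda_n+\mu_n+a}\,G_n(a)\,G_{n-1}(a).$$
Isolating the $G_n(a)$ factor (a single linear rearrangement) then produces exactly the claimed identity.

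For the expectation formula, the same conditioning on the first jump (starting from $n+1$ now) gives, for $v_k:=\mathbb{E}_{k+1}(T_k)$, the recursion
$$v_k \;=\; \frac{1}{\mu_{k+1}} \;+\; \frac{\lambda_{k+1}}{\mu_{k+1}}\,v_{k+1}.$$
Iterating produces a telescoping series in which the general term $\lambda_{n+1}\cdots\lambda_{i-1}/(\mu_{n+1}\cdots\mu_i)$ coincides, by the very definition of $\pi_n=\lambda_1\cdots\lambda_{n-1}/(\mu_1\cdots\mu_n)$, with $\pi_i/(\lambda_n\pi_n)$; summation over $i\geq n+1$ thus matches $\frac{1}{\lambda_n\pi_n}\sum_{i\geq n+1}\pi_i$. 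An alternative route is to differentiate the just-proved Laplace transform recursion at $a=0$, using $G_n(0)=1$ and $v_n=-G'_n(0)$, which recovers the same relation on the $v_n$.

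The one delicate point I anticipate is controlling the remainder $\frac{\lambda_{n+1}\cdots\lambda_N}{\mu_{n+1}\cdots\mu_N}\,v_N$ that appears when passing to the limit $N\to\infty$ in the iteration of the recursion for $v_n$. Setting $W_n:=v_n-\sum_{i\geq n+1}\lambda_{n+1}\cdots\lambda_{i-1}/(\mu_{n+1}\cdots\mu_i)$, the recursion propagates as $W_n=(\lambda_{n+1}\cdots\lambda_N)/(\mu_{n+1}\cdots\mu_N)\,W_N$ for every $N$, so I need to know this tail factor times $v_N$ tends to $0$. Under the extinction hypothesis \eqref{condition_extinction} this is straightforward; in the absence of such control the identity is to be read in $[0,\infty]$, with both sides equal when infinite. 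Apart from this tail estimate, the proof reduces to the two algebraic manipulations described above.
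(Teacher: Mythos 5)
Your derivation of the Laplace transform identity coincides with the paper's: both condition on the first jump out of $n$, exploit the independence of the exponential holding time, the jump direction and the post-jump evolution, and the algebraic rearrangement is the same.

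For the expectation identity you take a genuinely different route. You iterate the first-step recursion $v_k = \frac{1}{\mu_{k+1}} + \frac{\lambda_{k+1}}{\mu_{k+1}} v_{k+1}$ (or equivalently differentiate $G$ at $0$), telescope, and identify the generic term with $\pi_i/(\lambda_n\pi_n)$. The paper instead exhibits the Lyapounov function $J(m)=\sum_{n=1}^{m-1}\frac{1}{\lambda_n\pi_n}\sum_{i\geq n+1}\pi_i$, checks $LJ=-1$, and applies optional stopping to the martingale $J(X(t))+t$ at $t\wedge T_n$; passing $t\to\infty$ uses that $J$ is bounded by $S<\infty$ so that bounded and monotone convergence give $J(n)+\mathbb{E}_{n+1}(T_n)=J(n+1)$ directly. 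What the martingale route buys is precisely the issue you flag as "the one delicate point": the tail control is absorbed into the bounded convergence step rather than having to be established separately.

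On that delicate point you should be more careful. Condition \eqref{condition_extinction}, namely $\sum_i 1/(\lambda_i\pi_i)=\infty$, does \emph{not} by itself give the vanishing of the remainder $\frac{\lambda_{n+1}\cdots\lambda_N}{\mu_{n+1}\cdots\mu_N}\,v_N = \frac{\lambda_N\pi_N}{\lambda_n\pi_n}\,v_N$; one needs $\sum_i\pi_i<\infty$ (or $S<\infty$), and in fact the proposition as proved in the paper does implicitly use $S<\infty$ (that is what makes $J$ bounded). A clean way to close the gap within your framework, without appealing to the tail of the not-yet-established formula, is to introduce the truncated quantities $v_k^{(N)}:=\mathbb{E}_{k+1}(T_k\wedge T_{N+1})$, derive the same recursion with terminal condition $v_N^{(N)}$ bounded by $1/\mu_{N+1}$, obtain the finite-sum formula exactly, and then send $N\to\infty$ by monotone convergence. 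Alternatively, just invoke the standing assumption $S<\infty$ and bound $v_N$ a priori by $\sup_m \mathbb{E}_{m}(T_0)<\infty$ from Proposition \ref{CDI}(iii). As written, the tail estimate is asserted rather than proved, and is attributed to the wrong hypothesis.
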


\begin{proof}  The proof of the first part uses the Lyapounov function
defined for all $m\in \mathbb{N}^*$ by
\begin{equation*}
\label{J}
J(m)=\left\{\begin{array}{cc}
\displaystyle\sum_{n=1}^{m-1}\frac{1}{\lambda_n\pi_n}\sum_{i\geq n+1}\pi_i& \ \textrm{if } m\geq 2,\\
0&\ \textrm{ if }m<2.
\end{array}\right.
\end{equation*}
We denote by $L$ the infinitesimal generator of $X$: for any bounded function $f$ on $\Nat$ and any $n\in\Nat^*$,
\begin{equation}
\label{defG}
L(f)(n)=\left(f(n+1)-f(n)\right)\lambda_n+\left(f(n-1)-f(n)\right)\mu_n.
\end{equation}
One easily checks that for any $m\in \mathbb{N}^*$,
$$L J(m) = -1.$$
Thus,  the process $$J(X(t))-\int_0^t LJ(X(u))\dif u = J(X(t)) + t$$
  is a martingale with respect to the natural filtration of $X$. Therefore, we have for all $k\geq0$ and $t\geq0$,
\begin{equation}\label{eq:1}
\mathbb{E}_{n+1}(J(X(t\wedge T_{n})))+\mathbb{E}_{n+1}(t\wedge T_{n}) =J(n+1).
\end{equation}
Adding that $J$ is bounded by $S<+\infty$, we can use  the bounded and  monotone convergence theorems to let $t\rightarrow \infty$ in \eqref{eq:1} and get 
$$ \mathbb{E}_{n+1}(J(X( T_{n})))+\mathbb{E}_{n+1}( T_{n}) = J(n) + \mathbb{E}_{n+1}( T_{n}) =J(n+1).$$
\noindent Thus $\mathbb{E}_{n+1}( T_{n}) =J(n+1)- J(n)$ which concludes the proof of the first part of the proposition. \\

\noindent We consider now the Laplace transform of $\tau_n$ and 
follow  \cite[p. 264]{Anderson1991}.
 By the Markov property, we have
 $$\tau_{n-1}\overset{(\mathrm{d})}=\1{Y_n=-1}E_n+\1{Y_n=1}\left(E_n+\tau_{n}+\tau'_{n-1}\right)$$
 where $Y_n$, $E_n$, $\tau'_{n-1}$ and $\tau_{n}$ are independent random variables,
  $E_n$ is exponentially distributed  with parameter $\lambda_n+\mu_n$ and $\tau'_{n-1}$ is distributed as $\tau_{n-1}$ and $\p(Y_n=1)=1-\p(Y_n=-1)=\lambda_n/(\lambda_n+\mu_n)$.
Hence, we get
 $$G_{n-1}(a)=\frac{\lambda_n+\mu_n}{a+\lambda_n+\mu_n}\left(G_n(a)G_{n-1}(a)\frac{\lambda_n}{\lambda_n+\mu_n}+\frac{\mu_n}{\lambda_n+\mu_n}\right)$$
 and \eqref{recurrence_Gn} follows.
\end{proof}
$\newline$

We now give the usual definition of coming down from infinity, which means that the state $\infty$ is an entrance boundary for the process $X$
\cite[p. 305]{Revuz1999}.
\begin{defi}
  We say that the process $(X(t), t\geq 0)$ comes down from infinity if there exist a positive number $t$ and a non-negative integer $m$ such that
  $$\lim_{k\to+\infty}\p_k(T_m<t)>0.$$
\end{defi}

\me  We give now several necessary and sufficient conditions for $(X(t), t\geq 0)$ to come down from  infinity. The two first ones are   directly taken  from \cite{Cattiaux2009} . We add here an exponential moment criterion which is useful for the forthcoming proofs.  Let us  also mention that  it is equivalent to the existence and uniqueness of a quasistationnary distribution (cf. Van Doorn \cite{Doorn1991}). 
\begin{prop}\label{CDI}
Under condition \eqref{condition_extinction}, the following assertions  are equivalent:
\begin{enumerate}[{\normalfont (i)}]
 \item The process $(X(t), t\geq 0)$ comes down from infinity.
\item $S<+\infty$.
\item $\sup_{k\geq0}\Esp_k [T_0]<+\infty$.
\item For all $a>0$, there exists $k_a\in\Nat$ such that $\sup_{k\geq k_a}\Esp_k\left(\exp(aT_{k_a})\right)<+\infty$.
\end{enumerate}
\end{prop}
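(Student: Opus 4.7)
The plan is to treat (iii) $\Rightarrow$ (iv) as the substantive content and to dispatch the remaining implications quickly. The equivalence (i) $\Leftrightarrow$ (ii) is taken from \cite{Cattiaux2009}. For (ii) $\Leftrightarrow$ (iii), I simply telescope the first identity of Proposition \ref{expect-taun}: by the strong Markov property, $\Esp_k[T_0]=\sum_{n=0}^{k-1}\Esp(\tau_n)$, so $k\mapsto \Esp_k[T_0]$ is non-decreasing and its supremum equals $S$, giving the equivalence at once. For (iv) $\Rightarrow$ (iii), the inequality $e^x\geq 1+x$ yields $\Esp_k[T_{k_a}]\leq(\Esp_k[\exp(aT_{k_a})]-1)/a$, uniformly bounded in $k\geq k_a$; combined with the strong Markov decomposition $\Esp_k[T_0]=\Esp_k[T_{k_a}]+\Esp_{k_a}[T_0]$ (for $k\geq k_a$) and the monotonicity of $k\mapsto \Esp_k[T_0]$ just noted, this gives (iii).

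The core direction is (iii) $\Rightarrow$ (iv), which I would obtain by a Markov-inequality bootstrap on $T_{k_a}$. Set $M_{k_a}:=\sup_{k\geq k_a}\Esp_k[T_{k_a}]$; telescoping Proposition \ref{expect-taun} identifies $M_{k_a}$ with $S-J(k_a)$, and in particular $M_{k_a}\to 0$ as $k_a\to\infty$. Markov's inequality gives $\p_k(T_{k_a}>t)\leq M_{k_a}/t$ for every $k\geq k_a$ and $t>0$, and iterating this bound via the strong Markov property at time $t$ --- using $\{T_{k_a}>t\}\subset \{X(t)>k_a\}$ and applying the same uniform bound from any starting point $j>k_a$ --- yields by induction
$$\p_k(T_{k_a}>nt)\leq (M_{k_a}/t)^n \qquad (n\geq 1).$$

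Integrating this geometric tail, for every $t>M_{k_a}$ and every $k\geq k_a$,
$$\Esp_k\!\left[\exp(aT_{k_a})\right]\leq 1+a t\, e^{at}\sum_{n\geq 0}\bigl(e^{at}M_{k_a}/t\bigr)^n,$$
which is finite and uniform in $k\geq k_a$ as soon as $e^{at}M_{k_a}<t$. Optimising in $t$ (the choice $t=eM_{k_a}$ is essentially optimal) shows that the uniform bound holds whenever $a<1/(eM_{k_a})=1/(e(S-J(k_a)))$, a threshold that tends to $+\infty$ as $k_a\to\infty$; hence any prescribed $a>0$ can be accommodated by choosing $k_a$ large enough. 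The only delicate point is that the Markov iteration must stay uniform in the starting state, which is guaranteed precisely by the observation $\{T_{k_a}>t\}\subset \{X(t)>k_a\}$, so that $\p_j(T_{k_a}>t)\leq M_{k_a}/t$ for every $j>k_a$ can be plugged back into the strong Markov property without any loss.
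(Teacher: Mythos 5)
Your proof of the core implication (iii) $\Rightarrow$ (iv) is correct but takes a genuinely different route from the paper. The paper introduces the exponentially weighted Lyapunov function $J_a$ (with $LJ_a=-1$ above level $k_a$ and $J_a(\infty)\leq 1/a$), applies optional stopping to the martingale $e^{at}J_a(X(t))-\int_0^t e^{au}\left(aJ_a+LJ_a\right)(X(u))\,\dif u$ at $t\wedge T_{k_a}$, and deduces $\Esp_k\!\left[e^{a(t\wedge T_{k_a})}\right]\leq J_a(k)/J_a(k_a)$. Your argument replaces this with an iterated Markov inequality: the uniform bound $\p_j(T_{k_a}>t)\leq M_{k_a}/t$ for $j>k_a$, the skip-free inclusion $\{T_{k_a}>t\}\subset\{X(t)>k_a\}$ and the Markov property at time $t$ give a geometric tail $\p_k(T_{k_a}>nt)\leq (M_{k_a}/t)^n$, and integrating then optimizing over $t$ produces the explicit admissible range $a<1/(eM_{k_a})$, which covers any prescribed $a$ once $k_a$ is large because $M_{k_a}=S-J(k_a)\to 0$. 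Both arguments are sound; yours is more elementary (no generator, no martingale) and is quantitatively sharper in that it exhibits the threshold for $a$ explicitly.

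One gap remains in your (iv) $\Rightarrow$ (iii). You write $\Esp_k[T_0]=\Esp_k[T_{k_a}]+\Esp_{k_a}[T_0]$ and conclude from the uniform bound on the first term, but you never justify that the constant $\Esp_{k_a}[T_0]$ is finite; condition \eqref{condition_extinction} alone only gives $T_0<\infty$ a.s., not finite mean (if $\sum_i\pi_i=\infty$ then $\Esp_n[T_0]=\infty$ for every $n\geq1$). The gap is easily filled: from (iv) and $e^x\geq 1+x$ one gets $\Esp_{k_a+1}[T_{k_a}]<\infty$, hence $\sum_{i\geq k_a+1}\pi_i<\infty$, hence every $\Esp(\tau_n)$ is finite and so is $\Esp_{k_a}[T_0]=\sum_{n<k_a}\Esp(\tau_n)$. (Alternatively, and more simply, one may prove (iv) $\Rightarrow$ (i) directly by Markov's inequality on $e^{aT_{k_a}}$, as the paper does, and let the equivalence of (i), (ii), (iii) close the loop.) A minor point in the same vein: since the sum defining $S$ starts at $n=1$, the telescoping gives $\sup_k\Esp_k[T_0]=S+\Esp_1[T_0]=S+\sum_{i\geq1}\pi_i$, not $S$ exactly; this does not affect the equivalence (ii) $\Leftrightarrow$ (iii), because $S<\infty$ already forces $\sum_i\pi_i<\infty$.
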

\me
This result is the discrete counterpart of Lemma 7.4 in \cite{Cattiaux2009} for Feller diffusion processes
  $\dif Z_t=\sqrt{\gamma Z_t}dB_t+ Z_{t}(r- f(Z_t))\dif t$  and suitable    function $f$ and $r>0$.
Recall that if  \eqref{condition_limite} and \eqref{condition_croissance} are satisfied, Assertion (ii), and then (i), (iii), (iv),  are equivalent to Condition \eqref{condition_CDI_mu}, which can be seen as the discrete counterpart of the criteria in \cite[p.1953]{Cattiaux2009} stating that the process $Z$ comes down from infinity if and only if $\int_1^\infty\frac{\dif x}{xf(x)}<+\infty$.

\begin{proof}[Proof of Proposition \ref{CDI}]
 Assertions (i), (ii), and (iii) are equivalent according to \cite[Prop 7.10]{Cattiaux2009}. Let us now prove that (iv) implies that $X$ comes down from infinity. Indeed, taking $a=1$ in (iv), we have $M:=\sup_{k\geq k_1}\Esp_k\left(\exp(T_{k_1})\right)<+\infty$. Then Markov inequality ensures that  for all  $k\geq k_1$ and $t\geq0$,
$\p_k(T_{k_1}<t)\geq 1-\exp(-t)M$. Choosing  $t$ small enough ensures that the process comes down from infinity.

\noindent Finally, we   prove that (ii) implies (iv)  by adapting the proof of \cite[Prop 7.6]{Cattiaux2009}  to the discrete setting.
We fix $a>0$ and using  $S<+\infty$, there exists $k_a$ such that
\begin{equation*}\label{defi_n_a}\sum_{n\geq k_a-1}\frac{1}{\lambda_n\pi_n}\sum_{i\geq n+1}\pi_i\leq\frac{1}{a}.\end{equation*}
We now define the Lyapounov function  $J_a$ as
$$J_a(m):=\left\{\begin{array}{cc}
\displaystyle\sum_{n=k_a-1}^{m-1}\frac{1}{\lambda_n\pi_n}\sum_{i\geq n+1}\pi_i& \textrm{if } m\geq k_a\,,\\
0&\textrm{ if }m<k_a\,.
\end{array}\right.$$
We note that $J_{a}$ is  non-decreasing,   bounded and recalling the definition of the generator 
$L$ from \eqref{defG}, $LJ_a(m)=-1$ for any $ m\geq k_a$.
 Then,
$$M_t:=e^{at}J_a(X(t))-\int_0^te^{au}\left(aJ_a(X(u))+LJ_a(X(u))\right)\dif u, \qquad (t\geq0)$$
is a martingale with respect to the natural filtration of $X$.  Using 
the stopping time  $T_{k_a}$ and the fact that  $J_a(X(u))\leq J_a(\infty)\leq1/a$ , we have  for all $k\geq k_a$ and $t\geq0$,
\begin{eqnarray*}
\Ek{e^{at\wedge T_{k_a}}J_a(X(t\wedge T_{k_a}))}&=& \Ek{\int_0^{t\wedge T_{k_a}}e^{au}\left(aJ_a(X(u))+LJ_a(X(u))\right)\dif u}+J_a(k)\\
&= &\Ek{\int_0^{t\wedge T_{k_a}}e^{au}\left(aJ_a(X(u))-1\right)\dif u}+J_a(k)\\
&\leq & J_a(k)
\end{eqnarray*}
since  $ u\leq t\wedge T_{k_a}$ ensures that $X(u)\geq k_a$ and $LJ_a(X(u))=-1$. 
Therefore, for any  $k\geq k_a$, $\p_k$-a.s. $J_a(X(t\wedge T_{k_a}))\geq J_a(k_a)$ and
$$\Ek{e^{at\wedge T_{k_a}}}\leq \frac{J_a(k)}{J_a(k_a)}.$$
Then (iv) follows from the monotone convergence theorem and Assumption (ii). 
\end{proof}
\bi Under our additional assumption  \eqref{condition_limite},
 we can now define the process starting from infinity and check that it indeed comes down instantaneously from infinity a.s..
 We set $\overline{\Nat}:=\Nat\cup\{\infty\}$ and for any $T>0$, we denote by $\mathbb{D}_{\overline{\Nat}}([0,T])$ the Skorohod space of c\`adl\`ag functions on $[0,T]$ with values in $\overline{\Nat}$.
 
\begin{prop}\label{defi_P_infini} Let $T>0$. 
Under \eqref{condition_extinction} and \eqref{cv_{S}} and \eqref{condition_limite}, the law of  $X$ under $\p_k$ converges weakly as $k\to+\infty$ in $\mathbb{D}_{\overline{\Nat}}([0,T])$. \\
We denote by $\p_\infty$ the law of the limit and  we have
\begin{equation*}
\label{temps_fini_ps} \p_{\infty}\left(\inf\{t>0 : X(t)<+\infty\}=0\right)=1.
 \end{equation*}
 \end{prop}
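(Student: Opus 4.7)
The plan is to realize $\p_\infty$ as the law of the monotone pointwise limit of a coupled family $(X^{(k)})_{k \geq 1}$, where $X^{(k)}$ has the law of $X$ under $\p_k$, and then to deduce the instantaneous coming down from the convergence of the expected hitting times to $0$ at $+\infty$, which is provided by Proposition \ref{expect-taun} and $S<+\infty$.

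First, I would realize all the processes $X^{(k)}$ on a common probability space through the standard sticky coupling for skip-free chains (independent dynamics when the two trajectories differ, a common clock when they coincide). Stochastic monotonicity ensures that $k \mapsto X^{(k)}(t)$ is pathwise non-decreasing, and one may set $X^{(\infty)}(t) := \lim_{k \to \infty} X^{(k)}(t) \in \overline{\Nat}$. For each fixed $n$, the hitting time $T_n^{(k)}:=\inf\{t:X^{(k)}(t)=n\}$ is non-decreasing in $k$, and by the strong Markov property it has the distribution of a partial sum $\sum_{j=n}^{k-1} \xi_j$ of independent random variables $\xi_j$ distributed as $\tau_j = T_j$ under $\p_{j+1}$. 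Proposition \ref{expect-taun} together with \eqref{cv_{S}} gives $\sum_{j\geq 1}\Esp(\xi_j) = S<+\infty$, so the monotone limit $T_n^{(\infty)}:=\lim_k T_n^{(k)}$ is almost surely finite with the same distribution as $\sum_{j\geq n} \tau_j$.

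To upgrade the pointwise convergence to the weak convergence in $\mathbb{D}_{\overline{\Nat}}([0,T])$, I would then verify tightness. Compact containment is automatic since $\overline{\Nat}$ is the compact one-point compactification of $\Nat$. For the modulus-of-continuity estimate I would slice the paths at the $T_n^{(k)}$: after $T_n^{(k)}$, the trajectory lies in $\{0,\ldots,n\}$ with the $k$-independent law of a birth and death process started from $n$, while before $T_n^{(k)}$ it stays above $n$ but may oscillate. Here \eqref{condition_limite} is crucial, as it yields a uniform bound $\lambda_n/(\lambda_n+\mu_n)\to l/(1+l)<1/2$ at large levels, giving a $k$-uniform geometric tail on the heights of upward excursions and thus the Aldous-type modulus control. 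The weak limit is then forced to coincide with the law of $X^{(\infty)}$, which we name $\p_\infty$.

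Finally, the instantaneous coming down follows from
\[
\Einf{T_n} = \sum_{j \geq n} \Esp(\tau_j) \linf{n} 0,
\]
so that, $T_n$ being non-increasing in $n$ and nonnegative, $T_n \to 0$ almost surely under $\p_\infty$. Since $X(T_n)=n<+\infty$ whenever $T_n>0$, we get $\inf\{t>0:X(t)<+\infty\}\leq T_n$ for every $n$, which yields $\p_\infty(\inf\{t>0:X(t)<+\infty\}=0)=1$. The main obstacle is the modulus-of-continuity half of the tightness step: because the limit starts at the singular state $\infty$ and must immediately become finite, one has to rule out any cluster of oscillations accumulating at $t=0$; this is precisely what \eqref{condition_limite} delivers, by turning the $L^1$ control on hitting times from Proposition \ref{expect-taun} into uniform pathwise control on excursions.
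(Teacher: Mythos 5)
You take a genuinely different route from the paper: instead of invoking Donnelly's entrance-boundary theorem and reducing everything to the equi-boundedness condition $\lim_{A\to\infty}\liminf_{k\to\infty}\p_k(X(t)\leq A)=1$, you build a monotone coupling, take the pointwise limit $X^{(\infty)}$, and try to conclude by tightness plus identification. Your final step --- $\Einf{T_n}\to0$, hence $T_n\downarrow0$ a.s.\ under $\p_\infty$, hence $\inf\{t>0:X(t)<\infty\}=0$ --- is clean and attractive, once the identification $T_n=\sum_{i\geq n}\tau_i$ under $\p_\infty$ is in hand.

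The tightness/identification step, however, has gaps. First, the claim that ``after $T_n^{(k)}$ the trajectory lies in $\{0,\dots,n\}$'' is false: a birth and death chain can climb back above $n$; only the weaker statement that after $T_n^{(k)}$ the path has the $k$-independent law $\p_n$ is usable. Second, and more importantly, excursion heights are not the mechanism behind the modulus control, so the claimed role of \eqref{condition_limite} there is off-target. In $\overline\Nat$ every state above a threshold $N$ lies within an $o(1)$-ball around $\infty$, so the oscillation of the path while it is above $N$ is automatically metrically small whatever the excursions do; the Aldous criterion then reduces to bounding, from a stopping time at a low level, the probability of any jump in a short window, which uses only finiteness of the rates on compacts. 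What \eqref{condition_limite} is actually doing in the paper is producing the drift bound $\lambda_n-\mu_n\leq0$ for $n$ large, hence $\sup_{s\leq t}\Esp_{N_0}(X(s))<\infty$, which is the core of the equi-boundedness estimate. Your sketch never independently establishes the analogue, namely $X^{(\infty)}(t)<\infty$ a.s.\ for $t>0$: $T_n^{(\infty)}\to0$ does not give this without first knowing that after $T_n^{(\infty)}$ the limit evolves as a non-exploding chain started from $n$, which is precisely the identification the tightness argument was meant to supply --- so as written the reasoning is close to circular. You would close the gap by importing the paper's moment bound (or the equi-boundedness it yields) into the coupled picture before asserting that the Skorokhod limit is the law of $X^{(\infty)}$.
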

\begin{proof} First, we show that under Assumption  \eqref{condition_limite} and  for any $N_{0}\in \mathbb{N}^*$ and $t>0$, 
\be
\label{moment}\sup_{s\in[0,t]} \Esp_{N_0}(X(s)) <+\infty.
\ee
Indeed,  there exists $ N\in \mathbb{N}^*$ such that for $n \geq N$, 
$\lambda_{n} - \mu_{n} \leq 0.$ Hence, for $s\leq t$,
\ben \Esp_{N_0}(X_{s}) &=& N_0+  \int_{0}^s \Esp_{N_0}\left(\lambda_{X(u)} - \mu_{X(u)} \right) \dif u\\
&\leq&   N_0+ \int_{0}^s\Esp_{N_0}\left((\lambda_{X(u)} - \mu_{X(u)}){\bf 1}_{X(u)\leq N} \right) \dif u\leq N_0+\sup_{n\leq N} |\lambda_{n} - \mu_{n}| t .
\een

 \noindent To prove the convergence of the sequence of laws  $\p_k$, we use  Theorem 1 in Donnelly \cite{Donnelly91}, which gives conditions under which a sequence of processes will converge (in law) to a Markov process with an entrance boundary. In our setting,
 the birth and death processes under $\p_{k}$ and $\p_{k'}$ only differ by their initial conditions $k$ and $k'$. Thus, we only need to check the equi-boundedness condition : for any $t>0$,
 \begin{equation}\label{lim_liminf}\lim_{A\to+\infty}\liminf_{k\to+\infty}\p_k(X(t)\leq A)=1.
 \end{equation}
For that purpose, we combine the first part of  Proposition \ref{expect-taun} and 
\eqref{cv_{S}} to get  
$\mathbb E (\sum_{i\geq1}\tau_i)<\infty.$
 Then,
 $\sum_{i\geq1}\tau_i$ is a.s. finite and for any $t,\eps>0$,
 there exists $N_0\geq1$ such that
 \begin{equation}\label{minoration_proba}
 \p\left(\sum_{i=N_0}^\infty \tau_i\geq \frac{t}{2}\right)\leq \eps.
 \end{equation}
 We fix $t$ and $\eps>0$. For $k\geq N_0$ and $A>0$,
{\setlength\arraycolsep{2pt}
\begin{eqnarray*}
\p_k(X(t)\geq A)&\leq& \p_k(T_{N_0}\geq t/2)+\p_k(X(t)\geq A,T_{N_0}<t/2)\\
&\leq&\p_k\left(\sum_{i=N_0}^k\tau_i\geq t/2\right)+\sup_{s\in[0,t/2]}\p_{N_0}(X(t-s)\geq A)\\
&\leq&\eps+A^{-1}\sup_{s\in[t/2,t]}\Esp_{N_0}(X(s)),
 \end{eqnarray*}}
using  \eqref{minoration_proba} and the Markov inequality in the last inequality.  Making $A$ tend to infinity and recalling  \eqref{moment}, we get
 \eqref{lim_liminf} and the  weak convergence of  $\p_k$ to $\p_\infty$ . 
 Using again \eqref{lim_liminf}  ensures that  for any $t,\eps>0$, there exists $A$ such that
 $\p_\infty(X(t)>A)\leq\eps$. Therefore,
 $$\p_\infty(\inf\{ s\geq 0 : X(s)<+\infty\}>t)\leq\p_\infty(X(t)>A)\leq\eps,$$
so that $ \inf\{ t\geq 0 : X(t)<+\infty\}=0$ $\p_{\infty}$ a.s. It  ends the proof.
 \end{proof}

\section{Behavior of $T_n$ under $\p_\infty$}\label{section_comportement_T_n}
From now on, we assume that the sequences $(\lambda_n)_n$ and $(\mu_n)_n$ satisfy the hypotheses   \eqref{condition_limite},  \eqref{condition_croissance} and \eqref{condition_CDI_mu}. Thus,  according to Propositions \ref{CDI} and \ref{defi_P_infini},  $X$ comes down from infinity and  $\p_\infty$ is well-defined.

\noindent In this section, we study the asymptotic 
 behavior of $T_n$ as $n\to+\infty$ under $\p_\infty$ by establishing a law of of large numbers and a central limit theorem.
Let us note that under $\p_\infty$, 
$T_n=\sum_{i\geq n}\tau_i$, so that
 \eqref{moment_{tn}} yields
$$\Esp_\infty(T_n)=\sum_{i\geq n}\frac{1}{\lambda_{i}\pi_{i}}\sum_{j\geq {i+1}}\pi_j.$$
Then $S<+\infty$ ensures that $\Esp_\infty(T_n)$ decreases to $0$ as $n\to+\infty$.

\medskip \noindent
In the following theorem, we prove that $T_n$ behaves as its mean $\Einf{T_n}$ as $n\to+\infty$. 
 Two regimes appear depending on whether the ratio of  mean times $\Esp_{n+1}(T_{n})/\Esp_\infty (T_n)$ 
vanishes or not. In the first case, the time $T_n$ can be seen as the contribution of independent random variables and a law of large numbers holds. In the second case, the time $T_n$ is essentially given by the sums of $\tau_i$ for $i$ close to $n$ and renormalizing $T_n$ by its mean yields a random limit.
\begin{thm}\label{Behavior_Tn}
We assume that  \eqref{condition_limite}, \eqref{condition_croissance} and \eqref{condition_CDI_mu} hold.
 \begin{enumerate}[{\normalfont (i)}]
  \item
  If $\Esp_{n+1}(T_n)/\Esp_\infty(T_n)\linf{n}  0$, then
 \begin{equation}\label{CV_Tn_proba}
\frac{T_n}{ \Esp_\infty(T_n)}\linf{n} 1 \qquad \text{ in } \  \p_\infty-\text{probability}.
 \end{equation}
Assuming further that $\displaystyle\sum_{n\geq0}\left(\Esp_{n+1}(T_{n})/\Esp_\infty(T_n)\right)^2<+\infty,$ then  \eqref{CV_Tn_proba} holds $\p_\infty$-a.s.
 \item\label{Behavior_Tn_ii}
 If $\Esp_{n+1}(T_{n})/\Esp_\infty(T_n) \linf{n}\alpha$ with $\alpha\in(0,1]$,  then
 $$\frac{T_n}{\Esp_\infty(T_n)}\overset{\mathrm{(d)}}{\underset{n\to+\infty}\longrightarrow}Z:=\sum_{k\geq0}\alpha\left(1-\alpha\right)^kZ_k$$
 where $(Z_k)_k$ is a sequence of i.i.d. random variables whose common Laplace transform $G(a):=\Esp_\infty\left(\exp(-aZ_0)\right)$ is the unique function $[0,+\infty)\to[0,1]$  that satisfies 
 \begin{equation}\label{equation_G}
 \forall a>0, \quad  G(a)\left[l\big(1-G(a(1-\alpha))\big)+1+a(1-l(1-\alpha))\right]=1.
 \end{equation}
  \end{enumerate}
\end{thm}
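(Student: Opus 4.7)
The plan rests on the decomposition $T_n = \sum_{i \geq n}\tau_i$ (valid under $\p_\infty$), where the $\tau_i$ are independent with means $m_i := \Esp(\tau_i) = \Esp_{i+1}(T_i)$ given in Proposition~\ref{expect-taun}. Setting $q_n := m_n/\Einf{T_n}$, the hypothesis in (i) reads $q_n \to 0$ and in (ii) reads $q_n \to \alpha \in (0,1]$.

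\textbf{Proof plan for (i).} The key estimate is $\Var(\tau_i) \leq C\,m_i^2$, uniform in $i$. Differentiating \eqref{recurrence_Gn} twice at $a=0$ yields the clean recursion $s_{i-1} = (\lambda_i/\mu_i)s_i + 2m_{i-1}^2$ for $s_i := \Esp(\tau_i^2)$; iterating it and using $\lambda_i/\mu_i \to l<1$ from \eqref{condition_limite} together with the comparison $\sup_{j \geq 0}m_{i+j}/m_i \leq C$ (a consequence of the mean recursion $m_i = 1/\mu_{i+1} + (\lambda_{i+1}/\mu_{i+1})m_{i+1}$ combined with \eqref{condition_croissance}) gives the desired bound. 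Using this bound and $m_i/\Einf{T_n} \leq m_i/\Einf{T_i}=q_i$ for $i \geq n$,
\[
\frac{\Vinf{T_n}}{\Einf{T_n}^2} = \sum_{i\geq n}\frac{\Var(\tau_i)}{\Einf{T_n}^2} \leq C\sum_{i\geq n}\Bigl(\frac{m_i}{\Einf{T_n}}\Bigr)^{\!\!2} \leq C\sum_{i \geq n}q_i^2 \leq C\,q_n,
\]
and Chebyshev's inequality gives $\p_\infty(|T_n/\Einf{T_n}-1|>\eps) \leq Cq_n/\eps^2 \to 0$, which is the convergence in probability. Under $\sum q_n^2 < \infty$, the sharper tail $C\sum_{i\geq n}q_i^2$ can be made summable along a suitably chosen subsequence $(n_k)$, so that Borel--Cantelli yields the a.s.\ convergence along $(n_k)$; the monotonicity of $n \mapsto T_n$, together with a dyadic choice of $n_k$ ensuring $\Einf{T_{n_k}}/\Einf{T_{n_{k+1}}}$ stays bounded, then interpolates the convergence to the full sequence.

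\textbf{Proof plan for (ii).} From $q_n \to \alpha$ and $\Einf{T_{n+1}} = \Einf{T_n}-m_n$, an easy induction gives $\Einf{T_{n+k}}/\Einf{T_n} \to (1-\alpha)^k$ and hence $m_{n+k}/\Einf{T_n} \to \alpha(1-\alpha)^k$ for every $k \geq 0$. The mean recursion $\mu_{n+1}m_n = 1+\lambda_{n+1}m_{n+1}$ combined with $m_{n+1}/m_n \to 1-\alpha$ and $\lambda_n/\mu_n \to l$ yields $\mu_n m_n \to M := (1-\alpha)/(1-l(1-\alpha))$. Introducing the rescaled Laplace transforms $h_n(a) := G_n(a/m_n)$ and rewriting \eqref{recurrence_Gn} (divided by $\mu_n$) as $G_{n-1}(b)[1+b/\mu_n+(\lambda_n/\mu_n)(1-G_n(b))]=1$, I then substitute $b=a/m_n$ and use $G_{n-1}(a/m_n) = h_{n-1}(a\,m_{n-1}/m_n)$ with $m_{n-1}/m_n \to 1/(1-\alpha)$ to find that any accumulation point $G$ of $(h_n)$ satisfies
\[
G\!\left(\frac{a}{1-\alpha}\right)\bigl[1+a/M+l(1-G(a))\bigr]=1,
\]
which after the change of variable $a \leftarrow a(1-\alpha)$ and the identity $(1-\alpha)/M = 1-l(1-\alpha)$ is precisely \eqref{equation_G}. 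Uniqueness of the solution $G:[0,\infty)\to[0,1]$ with $G(0)=1$ follows by iterating the explicit inversion $G(a) = 1/[1+a(1-l(1-\alpha))+l(1-G(a(1-\alpha)))]$ down to the boundary value $G(0)=1$; this pins down the full limit, so $h_n \to G$ and $\tau_n/m_n \Rightarrow Z_0$ with Laplace transform $G$. Finally, by independence of $(\tau_i)_{i \geq n}$ under $\p_\infty$,
\[
\Esp_\infty\bigl(e^{-aT_n/\Einf{T_n}}\bigr) = \prod_{k \geq 0} h_{n+k}\bigl(a\,m_{n+k}/\Einf{T_n}\bigr),
\]
and a tail-truncation argument justified by $\sum_k \alpha(1-\alpha)^k = 1$ gives pointwise convergence of this infinite product to $\prod_{k\geq 0}G(a\alpha(1-\alpha)^k)$, the Laplace transform of $\sum_{k\geq 0}\alpha(1-\alpha)^k Z_k$ with i.i.d.\ copies $Z_k \overset{(d)}{=} Z_0$, completing the proof.

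\textbf{Main obstacle.} The two delicate steps are (a) the almost sure refinement in (i), where under the weak assumption $\sum q_n^2 < \infty$ a direct Borel--Cantelli bound is not summable and one really needs the subsequence-plus-monotonicity argument, and (b) in (ii), the identification of $G$ as the unique solution of \eqref{equation_G} (with the iteration towards $G(0)=1$) together with the uniform-in-$n$ control of the tail of the infinite product of Laplace transforms.
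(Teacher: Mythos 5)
Your plan for part (ii) and for the a.s.\ refinement in (i) is sound and close to the paper's in spirit (the paper cites a law-of-large-numbers criterion of Klesov rather than your subsequence-plus-monotonicity interpolation, but both routes work). The genuine gap is in the convergence in probability in (i). Your chain of inequalities asserts
\[
\frac{\Vinf{T_n}}{\Einf{T_n}^2}\leq C\sum_{i\geq n}\left(\frac{m_i}{\Einf{T_n}}\right)^2\leq C\sum_{i\geq n}q_i^2\leq C\,q_n,
\]
but the last step is false, and worse, the quantity $\sum_{i\geq n}q_i^2$ need not even be finite under the hypothesis of (i). The hypothesis only gives $q_n\to0$, and the paper's own Example~\ref{contre_exemple} (with $\mu_n=\exp(n/\log n)\log n$, giving $r_n\sim 1/\log n$) is precisely a case where $q_n\to0$ yet $\sum_n q_n^2=\infty$; so your upper bound is vacuous there and the Chebyshev step does not close.

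What you are missing is the paper's key observation that under $q_n\to0$ the \emph{square of the sum} dominates the \emph{sum of squares} in a quantifiable way: writing $\Einf{T_n}=\sum_{k\geq n}m_k$, one has
\[
\Einf{T_n}^2=\Big(\sum_{k\geq n}m_k\Big)^2\geq 2\sum_{k\geq n}m_k\sum_{l>k}m_l=2\sum_{k\geq n}m_k\,\Einf{T_{k+1}},
\]
and since $\Einf{T_{k+1}}/m_k=1/q_k-1\to\infty$, for any $A>0$ there is $n_0(A)$ with $\Einf{T_{k+1}}\geq A m_k$ for $k\geq n_0(A)$, hence $\Einf{T_n}^2\geq 2A\sum_{k\geq n}m_k^2$ for $n\geq n_0(A)$. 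Combined with $\Var(\tau_k)\leq Cm_k^2$ this gives $\Vinf{T_n}/\Einf{T_n}^2\leq C/(2A)$ for $n\geq n_0(A)$, which vanishes as $A\to\infty$. Replacing your invalid bound by this argument repairs the convergence in probability; the rest of your proposal then stands.
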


\bi We refer to  Appendix C for some examples and counterexamples.  For instance, if $\lambda_{k}=k$, then
  $\mu_k= k^{\gamma}\log(k)^{\beta}$ with $\gamma>1$
obeys to the regime (i),  whereas $\mu_k = \exp(\beta k)$ corresponds to the regime (ii). We also stress  that $\Esp_{n+1}(T_{n})/\Esp_\infty(T_n)$ may not converge (Example \ref{ctr}) and that the a.s. convergence can fail under the assumption (i) (Example \ref{contre_exemple}).

\noindent Before proving Theorem \ref{Behavior_Tn}, we need a lemma dealing with the asymptotic behaviors of the first moments of $\tau_n$ as $n\to+\infty$.
\begin{lem}\label{lemme_moments_tau_n}
 Under hypotheses \eqref{condition_limite}, \eqref{condition_croissance} and \eqref{condition_CDI_mu},
 there exist positive constants $C_1,\ C_2,\ C_3$ such that for $n\geq1$
 $$\frac{i!}{\mu^i_{n+1}}\leq \Einf{\tau_n^i}\leq \frac{C_i}{\mu_{n+1}^i},\quad i=1,2,3.$$
 Moreover, under the additional assumption $l=0$, we have
 $$\Einf{\tau_n^i}\underset{n\to+\infty}\sim\frac{i!}{\mu^i_{n+1}},\quad i=1,2,3.$$
\end{lem}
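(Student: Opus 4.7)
My plan is to reduce both inequalities to a single recurrence obtained by differentiating the Laplace-transform identity \eqref{recurrence_Gn}. Using the strong Markov property at $T_{n+1}$ to identify $\Esp_\infty(\tau_n^i)$ with $\Esp_{n+1}(T_n^i)=:m_n^{(i)}$, I rewrite \eqref{recurrence_Gn} in the form $G_n(a)=\mu_{n+1}/D_n(a)$ with $D_n(a)=\mu_{n+1}+a+\lambda_{n+1}(1-G_{n+1}(a))$ and compute $(-1)^iG_n^{(i)}(0)$ for $i=1,2,3$. Each calculation produces the scalar recurrence
$$m_n^{(i)}=a_n^{(i)}+\frac{\lambda_{n+1}}{\mu_{n+1}}\,m_{n+1}^{(i)},$$
with $a_n^{(1)}=1/\mu_{n+1}$, $a_n^{(2)}=2(1+\lambda_{n+1}m_{n+1}^{(1)})^2/\mu_{n+1}^2$, and $a_n^{(3)}=6(1+\lambda_{n+1}m_{n+1}^{(1)})^3/\mu_{n+1}^3+6\lambda_{n+1}(1+\lambda_{n+1}m_{n+1}^{(1)})m_{n+1}^{(2)}/\mu_{n+1}^2$ — a polynomial expression in $\mu_{n+1}^{-1}$, $\lambda_{n+1}$ and the lower-order moments.

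The lower bound $m_n^{(i)}\ge i!/\mu_{n+1}^i$ follows from a probabilistic interpretation: $\tau_n$ stochastically dominates the total holding time spent at state $n+1$ before the first jump to $n$, and a standard thinning/competing-exponentials computation shows that this total holding time is $\mathrm{Exp}(\mu_{n+1})$ in law (a geometric $\mu_{n+1}/(\lambda_{n+1}+\mu_{n+1})$-sum of iid $\mathrm{Exp}(\lambda_{n+1}+\mu_{n+1})$ random variables collapses to $\mathrm{Exp}(\mu_{n+1})$).

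For the upper bound, I would pick $N$ so that $\theta:=\sup_{m\ge N}\lambda_m/\mu_m<1$ (possible by \eqref{condition_limite}) and set $M:=\sup_{n,k}\mu_n/\mu_{n+k}<\infty$ from \eqref{condition_croissance}. Proposition \ref{CDI}\,(iv) provides a crucial uniform bound $\sup_n m_n^{(i)}<\infty$: for $n\ge k_1$ one has $T_n\le T_{k_1}$ under $\p_{n+1}$, so $m_n^{(i)}\le\sup_{k\ge k_1}\Ek{T_{k_1}^i}<\infty$, and the finitely many remaining $m_n^{(i)}$ are each finite. Iterating the recurrence for $n\ge N$, the residual $\prod_{j=1}^{K}(\lambda_{n+j}/\mu_{n+j})\,m_{n+K}^{(i)}\le\theta^K\sup_n m_n^{(i)}$ vanishes as $K\to\infty$ and yields the closed form
$$m_n^{(i)}=\sum_{k\ge 0}a_{n+k}^{(i)}\prod_{j=1}^{k}\frac{\lambda_{n+j}}{\mu_{n+j}}.$$
Inducting on $i$: if $m_{n+1}^{(j)}\le C_j/\mu_{n+2}^j$ for $j<i$, then each contribution to $a_{n+k}^{(i)}$ is of the form $(\text{const depending on }\theta,M,C_1,\dots,C_{i-1})/\mu_{n+k+1}^i$; applying \eqref{condition_croissance} gives $1/\mu_{n+k+1}^i\le M^i/\mu_{n+1}^i$, and summing the geometric series $\sum_k\theta^k$ delivers $m_n^{(i)}\le C_i/\mu_{n+1}^i$ for $n\ge N$. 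I absorb the finitely many small-$n$ values by enlarging $C_i$.

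For the equivalent $m_n^{(i)}\sim i!/\mu_{n+1}^i$ under $l=0$, I would reuse the closed form. Every correction in $a_n^{(i)}$ carries a factor $\lambda_{n+1}m_{n+1}^{(j)}\le(\lambda_{n+1}/\mu_{n+1})MC_j\to 0$, so $\mu_{n+1}^i a_n^{(i)}\to i!$; and since $\theta_n:=\sup_{m\ge n+1}\lambda_m/\mu_m\to 0$, the tail over $k\ge 1$ is bounded by $M^iK_i\theta_n/(1-\theta_n)\cdot 1/\mu_{n+1}^i=o(1/\mu_{n+1}^i)$. The main technical subtlety is precisely this: a naive one-step bound $u_n\le A+Bu_{n+1}$ with $B=\theta M^i$ would require the smallness condition $\theta M^i<1$, which the hypotheses do not guarantee. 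Keeping the recurrence in telescoped (summation) form is what sidesteps this, because the cumulative bound $\mu_{n+1}/\mu_{n+k+1}\le M$ enters exactly once and produces a single factor $M^i$ rather than $M^{ik}$, after which only $\theta<1$ is needed for convergence.
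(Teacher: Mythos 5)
Your proof is correct and follows essentially the same route as the paper's: differentiate the Laplace-transform recurrence \eqref{recurrence_Gn} to obtain a first-order linear recurrence in the moments, bound $\sup_n m_n^{(i)}<\infty$ via Proposition~\ref{CDI}(iv), and solve the recurrence by telescoping (the paper packages exactly this in Lemma~\ref{lemme_technique}, applied with $a_n=\lambda_{n+1}/\mu_{n+1}$ and $b_n$ the appropriate driving term). The only genuine deviations are minor and both valid: for the lower bound the paper reads $u_n\geq b_n$ off the first term of the summed recurrence (for $i=1$ this is $m_n^{(1)}\geq 1/\mu_{n+1}$, and for $i\geq2$ the driving term $b_n$ involves lower moments squared or cubed, which are in turn $\geq 1/\mu_{n+1}^{j}$), whereas you argue directly that $\tau_n$ stochastically dominates an $\mathrm{Exp}(\mu_{n+1})$ variable, which is a cleaner probabilistic reason but yields the same conclusion. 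You also carry out the induction on $i$ explicitly instead of invoking Lemma~\ref{lemme_technique} together with \eqref{eq:1749}, and your closing remark about why the telescoped form is needed (a naive one-step contraction would demand $\theta M^i<1$, which the hypotheses do not give) is exactly the point that the sum form of Lemma~\ref{lemme_technique} quietly handles. No gap; this is the paper's argument written out in more detail.
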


\begin{proof}
By rewriting \eqref{moment_{tn}}, we have
$\Einf{\tau_n}=\sum_{i\geq n}\frac{1}{\mu_{i+1}}\prod_{j=n}^{i-1}\frac{\lambda_{j+1}}{\mu_{j+1}},$ with the convention  $\prod_{j=n}^{n-1}\frac{\lambda_{j+1}}{\mu_{j+1}}=1$. 
Thus, according to Lemma \ref{lemme_technique} applied to $a_i=\lambda_{i+1}/\mu_{i+1}$ and $b_i=1/\mu_{i+1}$, under \eqref{condition_croissance}, we obtain the expected bounds 
for the first moment $(i=1)$. Moreover
\begin{equation}\label{eq:1749}
 \sup_{n,k\geq0}\frac{\Einf{\tau_{n+k}}}{\Einf{\tau_{n}}}<\infty\medskip
\end{equation}
and we can now deal with  the second moment of $\tau_n$. Differentiating \eqref{recurrence_Gn} twice at $a=0$, we get 
 $$\Einf{\tau_{n-1}^2}=\frac{\lambda_n}{\mu_n}\,\Esp(\tau_{n}^2)+2\,\Einf{\tau_{n-1}}^2, \quad n\geq1.$$
Adding that $\left(\Einf{\tau_n^2}\right)_n$ is bounded from point (iv) of Proposition \ref{CDI}, that $\lim_{n\to+\infty}\lambda_n/\mu_n<1$ and that $(\Einf{\tau_n})_n$ satisfies \eqref{eq:1749}, another use of Lemma \ref{lemme_technique} ensures the desired result for $i=2$. Similarly, the case $i=3$ is obtained  by differentiating \eqref{recurrence_Gn} three times.
\end{proof}
\medskip
\begin{proof}[Proof of Theorem \ref{Behavior_Tn}(i)]
We use the notation $$m_n=\Esp_{n+1}(T_{n}),\quad
r_n:=\frac{\Esp_{n+1}(T_{n})}{\Esp_\infty(T_n)}=\frac{m_n}{\Esp_\infty(T_n)}.$$
Assumption (i) means that $r_n\rightarrow 0$. 
Let $\eps>0$. Using Bienaym\'e-Tchebychev inequality and the independence of  the random variables $(\tau_n)_n$, we have
 \begin{equation}\label{Bienayme_Tchebychev}
 \p_\infty\left(\left|\frac{T_n}{\Esp_\infty(T_n)}-1\right|>\eps\right)\leq\frac{\Var(T_n)}{\eps^2 \Esp_\infty(T_n)^2}=\frac{\sum_{k\geq n}\Var(\tau_k)}{\eps^2 \Esp_\infty(T_n)^2}.
 \end{equation}
 As  $\Esp_\infty(T_{n+1})/m_n=1/r_{n}-1\rightarrow + \infty$ as $n\to+\infty$,
 for all $A>0$, there exists an integer $n_0$ such that, for $n\geq n_0$, $\Esp_\infty(T_{n+1})\geq Am_n$ and
 $$\Esp_\infty(T_n)^2=\left(\sum_{k\geq n}m_k\right)^2\geq2\sum_{k\geq n}m_k\sum_{l>k}m_l\geq2A\sum_{k\geq n}m_k^2,$$
 since $\  \sum_{l>k}m_l = \Esp_\infty(T_{k+1}) \geq A m_{k}$.
 Coming back to \eqref{Bienayme_Tchebychev}, for $n\geq n_0$, we have
 \begin{equation}\label{eq:1553}
\p_\infty\left(\left|\frac{T_n}{\Esp_\infty(T_n)}-1\right|>\eps\right)\leq \frac{1}{2A\eps^2}\frac{\displaystyle\sum_{k\geq n}\Var(\tau_k)}{\displaystyle\sum_{k\geq n}m_k^2}.
 \end{equation}
Moreover, according to Lemma \ref{lemme_moments_tau_n}, for $n\geq1$ 
$$\Vinf{\tau_n}\leq\frac{C_2-1}{\mu_n^2}\leq(C_2-1)m_n^2.$$
Hence, the r.h.s. of \eqref{eq:1553} goes to $0$ as $A\to+\infty$  and the proof of the convergence in probability is complete.\\

\noindent  We now prove the a.s. convergence when the series $\sum_nr_n^{2}$ converges. According to the law of large numbers of Proposition 1 in \cite{Klesov83}, we just need to check that
 \begin{equation}\label{condition_serie}
  \sum_{n\geq0}\frac{\Var(\tau_{n})}{\Esp_\infty(T_n)^2}<+\infty.
 \end{equation}

\noindent From the first part of the proof, we know that $\Var(\tau_{n})\leq \widehat C \EE{\tau_{n}}^2$ for some positive constant $\widehat C$.
 So  $\sum_{n\geq1}r_n^{2}<+\infty$ ensures  \eqref{condition_serie} and the proof is complete.
 \end{proof}
 
\noindent  Before proving point (ii) of Theorem \ref{Behavior_Tn}, we prove the following key lemma focusing on the asymptotic behavior of the time $\tau_n$ (recall that we denote its mean by $m_n$). 
 \begin{lem}\label{equivalents_alpha_non_nul}
  If $\lim_{n\to+\infty} r_n=\alpha\in(0,1]$, we have  
 \begin{equation}\label{equivalents_ratios_alpha}
 \lim_{n\to+\infty}\frac{\Esp_\infty(T_{n+1})}{\Esp_\infty(T_n)}=\lim_{n\to+\infty}\frac{m_{n+1}}{m_n}=1-\alpha, \qquad 
\lim_{n\to+\infty}\mu_nm_{n-1}=\frac{1}{1-l(1-\alpha)}.
\end{equation} 
and
 $$\frac{\tau_n}{m_n}\overset{\mathrm{(d)}}{\linf{n}}\zeta$$
 where the Laplace transform of $\zeta$ is the unique solution of  \eqref{equation_G}.  
 \end{lem}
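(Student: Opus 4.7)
The plan is to treat the three claims in sequence, each feeding the next. The first two limits follow from elementary algebra combined with the identity $\mu_n m_{n-1} = 1 + \lambda_n m_n$ obtained by differentiating \eqref{recurrence_Gn} at $a=0$; the convergence in distribution of $\tau_n/m_n$ is the main work and will rely on viewing the rescaled recurrence as a contracting dynamical system on Laplace transforms.

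First, the identity $\Einf{T_n} = m_n + \Einf{T_{n+1}}$ gives $\Einf{T_{n+1}}/\Einf{T_n} = 1 - r_n \to 1-\alpha$, and writing $m_n = r_n \Einf{T_n}$ yields $m_{n+1}/m_n = (r_{n+1}/r_n)(1-r_n) \to 1-\alpha$; by reindexing, $v_n := m_n/m_{n-1} \to 1-\alpha$ as well. Next, differentiating \eqref{recurrence_Gn} once at $a=0$ and using $G_n(0)=1$, $G_n'(0) = -m_n$ gives $\mu_n m_{n-1} = 1 + \lambda_n m_n$; setting $u_n := \mu_n m_{n-1}$ and noting $\mu_n m_n = u_n v_n$, this becomes $u_n(1 - (\lambda_n/\mu_n) v_n) = 1$, so $u_n \to 1/(1-l(1-\alpha))$ by \eqref{condition_limite}.

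For the convergence in distribution, I set $\psi_n(a) := G_n(a/m_n) = \Einf{\exp(-a\tau_n/m_n)}$. Rearranging \eqref{recurrence_Gn}, dividing by $\mu_n$ and evaluating at $a/m_{n-1}$ yields the backward relation $\psi_{n-1} = T_n \psi_n$ where
\[
T_n F(a) := \frac{1}{1 + \tfrac{\lambda_n}{\mu_n}(1 - F(a v_n)) + \tfrac{a}{\mu_n m_{n-1}}},
\]
whose coefficients, by the two previous steps, converge to those of $TF(a) := 1/[1 + l(1-F(a(1-\alpha))) + a(1-l(1-\alpha))]$. A direct computation gives the contraction $|T_n F - T_n F'|(a) \leq (\lambda_n/\mu_n)\,|F - F'|(a v_n)$ whenever $F,F' \in [0,1]$, and in the limit $|TF - TF'|(a) \leq l\,|F-F'|(a(1-\alpha))$; iterating the latter $k$ times and using that $F, F'$ are continuous at $0$ with $F(0)=F'(0)=1$ gives uniqueness of a Laplace-transform solution $G$ of \eqref{equation_G}.

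The convergence $\psi_n \to G$ (which also provides existence of $G$) will then follow from the bound
\[
|\psi_n - G|(a) \leq \frac{\lambda_{n+1}}{\mu_{n+1}}\,|\psi_{n+1} - G|(a v_{n+1}) + \eta_{n+1}(a),
\]
with $\eta_{n+1}(a) := |T_{n+1}G - G|(a) \to 0$ uniformly on compacts (since all coefficients of $T_n$ converge to those of $T$ and $G$ is continuous). Iterating $k$ times, the purely contractive term decays like $(l')^k$ for some $l' < 1$ eventually bounding $\lambda_j/\mu_j$, while the accumulated error is at most $(1-l')^{-1} \sup_{m \geq n} \eta_m \to 0$; sending first $k$ and then $n$ to infinity yields $\psi_n \to G$ pointwise, hence $\tau_n/m_n \to \zeta$ in distribution. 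Tightness of $\{\tau_n/m_n\}$, ensuring the limit is an honest (non-defective) Laplace transform, follows from $\Einf{\tau_n/m_n} = 1$ and Markov's inequality. I expect the main obstacle to be the careful bookkeeping of this double limit, in particular making sure the accumulated approximation error in the iteration vanishes uniformly.
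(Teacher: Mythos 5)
Your first two steps (the algebraic limits and the uniqueness of the fixed point) follow the paper's proof essentially verbatim: the identity $\mathbb E_\infty(T_{n+1})/\mathbb E_\infty(T_n)=1-r_n$, the factorization $m_{n+1}/m_n=(r_{n+1}/r_n)(1-r_n)$, the differentiated recursion $\mu_n m_{n-1}=1+\lambda_n m_n$, and the contraction of the map $F\mapsto TF$ with Lipschitz constant $l<1$ are all the same ingredients.

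Where you genuinely diverge is in proving $\psi_n\to G$. The paper uses a compactness argument: the $H_n(a)=\mathbb E_\infty[\exp(-a\tau_n/m_n)]$ are $[0,1]$-valued and $1$-Lipschitz (since $|H_n'(a)|\le\mathbb E_\infty(\tau_n)/m_n=1$), so Arzel\`a--Ascoli gives a convergent subsequence, every limit point is shown to satisfy \eqref{equation_G}, and uniqueness forces the full sequence to converge; the final check $H(0^+)=1$ plays the role of your tightness remark. You instead run a direct one-step contraction estimate and iterate, controlling the accumulated error $\eta_m=|T_mG-G|$ by the uniform-on-compacts convergence of the coefficients. This is a valid and arguably more explicit route (it quantifies the rate via the contraction factor $l'$), but it presupposes the existence of $G$: the inequality $|\psi_n-G|\le(\lambda_{n+1}/\mu_{n+1})|\psi_{n+1}-G|+\eta_{n+1}$ already refers to $G$, so the parenthetical claim that the iteration \emph{also provides existence} of $G$ is circular as stated. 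To repair it, either invoke the Banach fixed-point theorem on the complete metric space of $[0,1]$-valued functions with the sup norm (where $T$ is an $l$-contraction, $l<1$) to get existence before iterating, or fall back on a compactness argument as in the paper; then your iteration, together with tightness of $\{\tau_n/m_n\}$, closes the proof. This is a small, fixable gap rather than a failure of the approach, but as written the existence of $G$ is not established.
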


\begin{proof}
We obtain the first part of  \eqref{equivalents_ratios_alpha} by noticing that $\Esp_\infty(T_{n+1})/\Esp_\infty(T_n)=1-r_n$ and that $$\frac{m_{n+1}}{m_n}=\frac{r_{n+1}}{r_{n}}\frac{\Esp_\infty(T_{n+1})}{\Esp_\infty(T_n)}.$$
Moreover,  differentiating \eqref{recurrence_Gn} at $a=0$ yields
\begin{equation*}\label{recurrence_m_n}
1=\frac{\lambda_n}{\mu_n}\frac{m_{n}}{m_{n-1}}+\frac{1}{\mu_n m_{n-1}},
\end{equation*}
which gives the second part of \eqref{equivalents_ratios_alpha} thanks to \eqref{condition_limite}. 
$\newline$

\noindent Let us now prove the uniqueness of  the function satisfying Equation \eqref{equation_G}. 
  For any bounded function $g:[0,+\infty)\to[0,1]$, we define the function $H(g):[0,+\infty)\to[0,1]$ as
 $$H(g):a\longmapsto\frac{1}{1+l(1-g(a(1-\alpha))+a(1-l(1-\alpha))}.$$
 Let $g_1$ and $g_2$ be two solutions of \eqref{equation_G}. We then have $H(g_1)=g_1$, $H(g_2)=g_2$
 and 
 {\setlength\arraycolsep{2pt}
 \begin{eqnarray*}
 \left|g_1(a)-g_2(a)\right| = \left|H(g_1)(a)-H(g_2)(a)\right|  &=& H(g_1)(a)H(g_2)(a) l\left|g_1(a(1-\alpha))-g_2(a(1-\alpha))\right|\\
 &\leq& l\left|g_1(a(1-\alpha))-g_2(a(1-\alpha))\right|
 \end{eqnarray*}}
where we have used that for any $a>0$, $H(g_1)(a)\leq1.$
We then have 
 $\|g_1-g_2\|_\infty\leq l\|g_1-g_2\|_\infty$
 with $l<1$, which entails that $g_1=g_2$ and yields the expected uniqueness. 
$\newline$

\noindent We can now prove the convergence  in distribution of $\tau_n$ as $n\to+\infty$ by a tightness criterion. Indeed, 
for $n\geq0$, let $H_n:[0,+\infty)\longrightarrow[0,1]$ be the function defined as
$$H_n(a)=\Einf{\exp(-a\tau_n/m_n)},\quad a>0.$$
The sequence $(H_n)_n$ is uniformly bounded since $0\leq H_n(a)\leq 1$ for every $n\geq0$ and every $a>0$. Moreover, for $n\geq0$, $H_n$ is differentiable and for $a>0$,
$$\left|H_n'(a)\right|=\Einf{\frac{\tau_n}{m_n}\exp\left(-a\frac{\tau_n}{m_n}\right)}\leq\frac{\Einf{\tau_n}}{m_n}=1.$$
Hence, the family $(H_n)_n$ is equicontinuous since all these functions are $1$-Lipschitzian functions. Then, thanks to Arzel\`a-Ascoli theorem, $(H_n)_{n\geq0}$ is relatively compact.

\noindent We now need to check that $(H_n)_n$ has a unique limit point.  Let us prove that if a subsequence of  $(H_n)$ converges to $H$  uniformly on any compact set of $[0,+\infty)$, then 
$H$ satisfies   \eqref{equation_G} and is then uniquely defined. For that purpose, we use  \eqref{recurrence_Gn}, so  for all  $a>0$ and $n\geq1$, we have
 \begin{equation*}
 G_{n-1}\left(\frac{a}{m_{n-1}}\right)=\left[1+\frac{a}{\mu_n m_{n-1}}+ \frac{\lambda_n}{\mu_n}\left(1-G_{n}\left(\frac{a}{m_{n-1}}\right)\right) \right]^{-1}
  \end{equation*}
that is,
\begin{equation}\label{recurrence_Gn2}
H_{n-1}(a)=\left[1+\frac{a}{\mu_n m_{n-1}}+ \frac{\lambda_n}{\mu_n}\left(1-H_{n}\left(a\frac{m_n}{m_{n-1}}\right)\right) \right]^{-1}.
\end{equation}
According to Lemma \ref{equivalents_alpha_non_nul},  $m_n/m_{n-1}\to1-\alpha$ as $n\to+\infty$.  Thus, if a subsequence  (also denoted by $H_{n}$ for simplicity) converges to $H$ uniformly, we have for every $a>0$
$$\lim_{n\to+\infty}H_{n}\left(a\frac{m_n}{m_{n-1}}\right)=H(a(1-\alpha)).$$
Letting $n\to+\infty$ in \eqref{recurrence_Gn2}, since $\mu_nm_{n-1}\to1/(1-l(1-\alpha)$ and $\lambda_n/\mu_n\to l$, $H$ satisfies \eqref{equation_G}
and  for every $a>0$
$$\lim_{n\to+\infty}\Einf{\exp(-a\tau_n/m_n)}=H(a).$$
Finally, we check   that $H$ is the Laplace transform of some random variable by proving that $H(0^+):=\lim_{a\to0}H(a)=1$. %Indeed, recalling that $H$ is non-decreasing, 
 From \eqref{equation_G}, $H(0^+)$ is a solution of   
$lH(0^+)^2-(1+l)H(0^+)+1=0.$
If $l=0$, this equation has only $1$ as a solution. If $l>0$, the two solutions are $1$ and $1/l$. But  $1/l>1$ and  obviously $H(0^+)\leq1$, so  that $1$ is the only  possible  solution. 
Hence, in all cases, $H(0^+)=1$ and that ends the proof.
 \end{proof}

\noindent We can now proceed with the proof of the second part of the theorem.

\begin{proof}[Proof of Theorem \ref{Behavior_Tn} \eqref{Behavior_Tn_ii}]
 Let $Z=\sum_{k\geq0}\alpha(1-\alpha)^kZ_k$ be defined as in the statement of the theorem.
 We use that $T_n=\sum_{k\geq n}\tau_k$ where the $\tau_k$'s are independent and that
  for all  $a_1,a_2,\dots,a_n,b_1,\dots b_n\in [0,1]$, a simple recursion ensures that
  \begin{equation}
   \label{prod}
\left|\prod_{i=1}^na_i-\prod_{i=1}^nb_i\right|\leq\sum_{i=1}^n\left|a_i-b_i\right|.
  \end{equation}
Then,  for every  $a>0$
 {\setlength\arraycolsep{2pt}
 \begin{eqnarray}&&\left|\Esp_\infty\left(\exp\left(-a\frac{T_n}{\Esp_\infty(T_n)}\right)\right)-\Esp_\infty\left(\exp\left(-aZ\right)\right) \right| \nonumber \\
&&\qquad =\left|\prod_{k\geq n}\Esp_\infty\left(\exp\left(-a\frac{\tau_{k}}{ \Esp_\infty(T_n)}\right)\right)-\prod_{k\geq0}\Esp_\infty\left(\exp\left(-a\alpha(1-\alpha)^kZ_k\right)\right) \right|\nonumber\\
  &&\qquad \leq \sum_{k\geq0}\left|\Esp_\infty\left(\exp\left(-a\frac{\tau_{k+n}}{ \Esp_\infty(T_n)}\right)\right)-\Esp_\infty\left(\exp\left(-a\alpha(1-\alpha)^kZ_k\right)\right) \right|. \label{eq:1816}
   \end{eqnarray}}
From Lemma \ref{equivalents_alpha_non_nul}, we know that in
$\p_\infty$-distribution, 
$\tau_n/ m_n$ converges to $\zeta$. %\overset{\textrm{(d)}}{\linf{n}}\zeta.$$
Then, thanks to \eqref{equivalents_ratios_alpha} and the fact that $r_n\to\alpha$, we have for $k\geq0$
 $$\frac{\tau_{k+n}}{\Esp_\infty(T_n)}=\frac{m_{n+k}}{\Esp_\infty(T_{n+k})}\prod_{i=1}^k\frac{\Esp_\infty[T_{n+i}]}{ \Esp_\infty[T_{n+i-1}]}\cdot\frac{\tau_{k+n}}{m_{n+k}}\overset{\textrm{(d)}}{\linf{n}}\alpha(1-\alpha)^k\mathcal \zeta.$$
 The uniqueness in \eqref{equation_G} ensures that the variables $(Z_k)_k$ are distributed as $\zeta$. Then, with the last display, we get that all the terms of the sum in \eqref{eq:1816} vanish as $n\to+\infty$.
We proceed by bounded convergence. Using that
$1-\exp(-x)\leq x$ for any $x\geq0$,  we get for $k,n\geq0$
  {\setlength\arraycolsep{2pt}
 \begin{eqnarray}
&&  \left|\Esp_\infty\left(\exp\left(-a\frac{\tau_{k+n}}{ \Esp_\infty(T_n)}\right)\right)-\Esp_\infty\left(\exp\left(-a\alpha(1-\alpha)^kZ_k\right)\right) \right|\nonumber \\
&& \qquad \leq 
  \left|1-\Esp_\infty\left(\exp\left(-a\frac{\tau_{k+n}}{ \Esp_\infty(T_n)}\right)\right)\right|+\left|1-\Esp_\infty\left(\exp\left({-a\alpha(1-\alpha)^kZ_k}\right)\right) \right|\nonumber\\
  &&\qquad \leq a\frac{m_{k+n}}{ \Esp_\infty(T_n)}+a\alpha(1-\alpha)^k\Esp_\infty(Z_0).\label{1911}
 \end{eqnarray}}
By differentiating \eqref{equation_G} at $0$, one finds 
$\Esp_\infty[Z_0]=1$. Moreover,
$$\frac{m_{n+k}}{ \Esp_\infty(T_n)}=\frac{ \Esp_\infty(T_{n+1})}{ \Esp_\infty(T_n)}\frac{ \Esp_\infty(T_{n+2})}{ \Esp_\infty(T_{n+1})}\cdots\frac{\Esp_\infty(T_{n+k})}{ \Esp_\infty(T_{n+k-1})}\frac{m_{n+k}}{\Esp_\infty(T_{n+k})}.$$
Since $m_{k+n}/\Esp_\infty[T_{k+n}]\leq 1$ and $\Esp_\infty(T_{n+1})/ \Esp_\infty(T_n)\to1-\alpha<1$ as $n\to+\infty$, there exist  $n_0\in\Nat$, $\beta<1$ and $C>0$ such that
$m_{k+n}/\Esp_\infty(T_n)\leq C\beta^k$ for all $k\geq0, n\geq n_0.$
 Thus, coming back to \eqref{1911}, for $n\geq n_0$, we have 
 $$\left|\Esp_\infty\left(\exp\left(-a\frac{\tau_{k+n}}{ \Esp_\infty(T_n)}\right)\right)-
\Esp_\infty\left(\exp\left(-a\alpha(1-\alpha)^kZ_k\right)\right)\right|\leq C\beta^k+ a\alpha(1-\alpha)^k.$$
 Since the r.h.s. in the last display is summable, the proof is complete.
\end{proof}

\bigskip \noindent 
We end this section by giving a central limit theorem (C.L.T.) satisfied by the sequence $(T_n)_n$.
 \begin{thm}\label{TLC_Tn}
 We suppose that assumptions \eqref{condition_limite}, \eqref{condition_croissance} and \eqref{condition_CDI_mu} hold. Then, if 
  \begin{equation}\label{hypothese_variances}
   \lim_{n\to+\infty}\frac{\Vinf{\tau_n}}{\Var_\infty(T_n)}=0
  \end{equation}
and if
\begin{equation}\label{hypothese_moment_ordre3}
   \lim_{n\to+\infty}\Vinf{T_n}^{-3/2}\sum_{k\geq n}\Einf{\left|\tau_{k}-\Einf{\tau_k}\right|^3}=0,
  \end{equation}
we have 
\begin{equation*}\label{CLT_T_n}
 \frac{T_n-\Einf{T_n}}{\Var_\infty(T_n)^{1/2}}\underset{n\to+\infty}{\overset{\mathrm{(d)}}\longrightarrow} N,
\end{equation*}
where $N$ follows a standard normal distribution.
 \end{thm}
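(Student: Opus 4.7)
The plan is to apply a Lyapunov-type central limit theorem to the family of independent random variables $(\tau_k)_{k\geq n}$ under $\p_\infty$, taking care of the fact that $T_n=\sum_{k\geq n}\tau_k$ is an \emph{infinite} series rather than the finite row of a usual triangular array. The argument proceeds through characteristic functions. Set $\tilde\tau_k:=\tau_k-\Einf{\tau_k}$, $\sigma_k^2:=\Vinf{\tau_k}$, $\rho_k:=\Einf{|\tilde\tau_k|^3}$, and $s_n^2:=\Vinf{T_n}=\sum_{k\geq n}\sigma_k^2$; the latter is finite by Lemma \ref{lemme_moments_tau_n} together with \eqref{condition_CDI_mu}. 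By L\'evy's continuity theorem, it suffices to show that
$$\phi_n(t) := \Einf{\exp\!\big(it(T_n-\Einf{T_n})/s_n\big)} = \prod_{k\geq n}\psi_{n,k}(t), \qquad \psi_{n,k}(t) := \Einf{\exp(it\tilde\tau_k/s_n)},$$
tends to $e^{-t^2/2}$ for every $t\in\R$.

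The classical inequality $|e^{ix}-1-ix+x^2/2|\leq |x|^3/6$ yields
$$\psi_{n,k}(t) = 1 - \frac{t^2\sigma_k^2}{2s_n^2} + r_{n,k}(t), \qquad |r_{n,k}(t)| \leq \frac{|t|^3 \rho_k}{6 s_n^3},$$
so hypothesis \eqref{hypothese_moment_ordre3} directly gives $\sum_{k\geq n}|r_{n,k}(t)|\to 0$. To pass to the logarithm term by term, I need uniform smallness of $|\psi_{n,k}(t)-1|$ in $k\geq n$. Hypothesis \eqref{hypothese_variances} controls the first index, while for $k>n$ Lyapunov's inequality $\sigma_k^3\leq\rho_k$ combined with \eqref{hypothese_moment_ordre3} gives
$$\max_{k\geq n}\frac{\sigma_k^2}{s_n^2} \leq \Big(\sum_{k\geq n}\rho_k\,\big/\, s_n^3\Big)^{2/3} \linf{n} 0,$$
hence $\max_{k\geq n}|\psi_{n,k}(t)-1|\to 0$ for each fixed $t$.

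Using $|\log(1+z)-z|\leq |z|^2$ for $|z|\leq 1/2$, for $n$ large enough I may sum the logarithms and write
$$\log\phi_n(t) = \sum_{k\geq n}(\psi_{n,k}(t)-1) + O\!\Big(\max_{k\geq n}|\psi_{n,k}(t)-1|\cdot \sum_{k\geq n}|\psi_{n,k}(t)-1|\Big).$$
The principal sum equals $-t^2/2 + \sum_{k\geq n}r_{n,k}(t) \to -t^2/2$, while the error is $o(1)$ because $\sum_{k\geq n}|\psi_{n,k}(t)-1|$ stays bounded by $t^2/2 + o(1)$ whereas the maximum vanishes. L\'evy's theorem then yields the desired convergence.

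The main obstacle is precisely the infinite character of the product: unlike in the standard Lyapunov CLT, one cannot truncate at a finite index, so the logarithmic expansion has to hold uniformly over \emph{all} $k\geq n$. This is what forces a Feller-type bound on $\max_{k\geq n}\sigma_k^2/s_n^2$, and extracting it from the assumptions (from \eqref{hypothese_variances} for the first index and from \eqref{hypothese_moment_ordre3} via Lyapunov's inequality for the rest) is the delicate point; the Lyapunov error itself is then controlled directly by \eqref{hypothese_moment_ordre3}.
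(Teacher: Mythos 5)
Your proof is correct, and it takes a route that is genuinely different from the paper's in two respects worth noting. The paper avoids complex logarithms entirely by following Billingsley's Theorem 27.2: it compares the (real) characteristic function to the real product $\prod_{k\geq0}\big(1-\tfrac{t^2}{2}\tfrac{\Vinf{\tau_{k+n}}}{\Vinf{T_n}}\big)$ via the telescoping bound $|\prod a_k-\prod b_k|\leq\sum|a_k-b_k|$, applies the inequality $|\EE{e^{it\xi}}-1+\Var(\xi)t^2/2|\leq\EE{\min(|t\xi|^2,|t\xi|^3)}$, and treats the log of the approximating real product separately. You instead take principal logarithms of the $\psi_{n,k}(t)$ directly and control the remainder by the standard $|\log(1+z)-z|\leq|z|^2$. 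Both variants work; Billingsley's keeps everything real at the price of an extra comparison product.

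The more substantive difference is in how the Feller condition $\max_{k\geq n}\sigma_k^2/s_n^2\to0$ is obtained. The paper first invokes Lemma~\ref{lemme_moments_tau_n} (giving $\Vinf{\tau_n}\leq(C_2-1)/\mu_{n+1}^2$) and Assumption~\eqref{condition_croissance} (giving $\mu_{n+1}\leq K\mu_{n+k+1}$) to establish $\Vinf{\tau_{k+n}}\leq(C_2-1)K^2\,\Vinf{\tau_n}$, and then feeds in hypothesis~\eqref{hypothese_variances} to get the uniform bound. You instead observe that Lyapunov's moment inequality $\sigma_k^3\leq\rho_k$ plus hypothesis~\eqref{hypothese_moment_ordre3} already yields $\max_{k\geq n}\sigma_k^2/s_n^2\leq\big(\sum_{k\geq n}\rho_k/s_n^3\big)^{2/3}\to0$. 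This is cleaner: it shows the conclusion follows from~\eqref{hypothese_moment_ordre3} alone, without invoking the birth-and-death structure through Lemma~\ref{lemme_moments_tau_n} and~\eqref{condition_croissance}, and in fact makes hypothesis~\eqref{hypothese_variances} a formal consequence of~\eqref{hypothese_moment_ordre3}. One small remark on your write-up: the Lyapunov bound applies to $k=n$ as well as $k>n$, so you do not actually need~\eqref{hypothese_variances} to handle the first index separately as you suggest in the prose.
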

\noindent Notice that by applying Lemma \ref{lemme_moments_tau_n} and by using assumption \eqref{condition_croissance}, there is $C>0$ such that
$\frac{\Einf{\tau_n}}{\Einf{T_n}}\leq C\frac{\Vinf{\tau_n}}{\Vinf{T_n}}.$ 
Thus,  hypothesis \eqref{hypothese_variances} implies that we are in the regime (i) of Theorem \ref{Behavior_Tn}.  We refer to the first example in Appendix.
 
\begin{proof}[Proof of Theorem \ref{TLC_Tn}]
First, Lemma \ref{lemme_moments_tau_n} gives that  for every $n\geq0$,
 $ \Vinf{\tau_n} \leq \frac{C_{2}-1}{\mu_{n+1}^2 }$. Recalling
 Assumption \eqref{condition_croissance}, we get for $k,n\geq0$
  $$\Vinf{\tau_{k+n}}\leq \frac{C_{2}-1}{\mu_{n+k+1}^2}\leq \frac{(C_{2}-1)\,K^2}{\mu_{n+1}^2}\leq (C_{2}-1)\,K^2\,\Vinf{\tau_n}$$
  where $K=\sup_{k,n\geq1}\mu_n/\mu_{n+k}$. 
  Thus assumption \eqref{hypothese_variances} entails the uniform convergence
  \begin{equation}
   \label{uniformite}
\sup_{k\geq0}\frac{\Vinf{\tau_{k+n}}}{\Vinf{T_n}}\linf{n}0.
  \end{equation}
Let us now prove that
$$Z_n:=\frac{T_n-\Einf{T_n}}{\Vinf{T_n}^{1/2}}$$ converges in distribution as $n\to+\infty$ toward a standard normal random variable.
We follow ideas of the proof of Theorem 27.2 in \cite{Billingsley1986} where Billingsley establishes a central limit theorem for partial sums of independent random variables thanks to L\'evy theorem.
Let $t$ be a fixed real number.  
 We note that by $(\ref{uniformite})$, 
$$\sum_{k\geq0}\log \left(1-\frac{t^2}{2}\frac{\Vinf{\tau_{k+n}}}{\Vinf{T_n}}\right)\equinf{n}-\frac{t^2}{2}\sum_{k\geq0}\frac{\Vinf{\tau_{k+n}}}{\Vinf{T_n}}=-\frac{t^2}{2},$$
so we just need to prove that
$$U_n:=\Einf{\exp\left(itZ_n\right)}-\prod_{k\geq 0}\left(1-\frac{t^2}{2}\frac{\Vinf{\tau_{k+n}}}{\Vinf{T_n}}\right)$$
vanishes as $n\to+\infty$ to conclude. % and then that the product in the latter converges to $e^{-t^2/2}$.
  First, since the $\tau_n$'s are independent,  for all $t\in\R, n\geq0$
  \begin{equation}\label{eq1711}
   |U_n|=
   \left|\prod_{k\geq0}\Einf{\exp\left(it\frac{\tau_{k+n}-\Einf{\tau_{k+n}}}{\Vinf{T_n}^{1/2}}\right)}-\prod_{k\geq 0}\left(1-\frac{t^2}{2}\frac{\Vinf{\tau_{k+n}}}{\Vinf{T_n}}\right)\right|.
  \end{equation} 
  According to  \eqref{uniformite}, for $n$ large enough and for any $k$,
  all the factors of the second product of \eqref{eq1711} are less than 1. Hence, thanks to \eqref{prod}, we have the %classical 
  inequality
  \begin{equation}\label{eq1727}
   |U_n|\leq \sum_{k\geq0}\left|\Einf{\exp\left(it\frac{\tau_{k+n}-\Einf{\tau_{k+n}}}{\Vinf{T_n}^{1/2}}\right)}-1+\frac{t^2}{2}\frac{\Vinf{\tau_{k+n}}}{\Vinf{T_n}}\right|.
  \end{equation}
  According to equation (27.11) in \cite[p.369]{Billingsley1986}, for any centered random variable $\xi$ with a finite second moment, we have
  $\left|\EE{\exp(it\xi)}-1+\Var(\xi)t^2/2\right|\leq\EE{\min(|t\xi|^2,|t\xi|^3)},\quad t\geq 0.$
 Using this inequality with the random variables $\tau_{n+k}-\Einf{\tau_{n+k}}$, we obtain from \eqref{eq1727} that
 \begin{equation*}\label{eq1745}
   |U_n|\leq |t|^3\sum_{k\geq0}\frac{\Einf{\left|\tau_{k+n}-\Einf{\tau_{k+n}}\right|^3}}{\Vinf{T_n}^{3/2}}.
  \end{equation*}
  and using assumption \eqref{hypothese_moment_ordre3}, $U_n$ goes to $0$ as $n\to+\infty$.
Is completes the proof.
 \end{proof}

\section{Behavior of $X(t)$ as $t$ goes to $0$}\label{section_comportement_X}
\label{SCDI}
From the results of  Section \ref{section_comportement_T_n}, we can describe the behavior of $X$ for small times, when it starts at $+\infty$. 
\subsection{Law of large numbers}
We first prove that under $\p_\infty$, $X(t)$ behaves as $v(t)$ as $t\to0$ where
\begin{equation}\label{defi_v}
v(t):=\inf\{n\geq0; \Esp_\infty(T_n)\leq t\}
\end{equation}
is   the generalized inverse function of 
$n\mapsto \Esp_\infty(T_n)= \frac{1}{\lambda_{n}\pi_{n}}\sum_{i\geq n+1}\pi_i.$

\medskip \noindent The function $v$  is a non-increasing  function which tends to infinity when $t$ tends to $0$. 

\medskip \noindent Two asymptotic behaviors appear, which are inherited 
from  Theorem \ref{Behavior_Tn}. First, we  assume  that $\lambda_n/\mu_n\rightarrow 0$
as $n\rightarrow \infty$ and that the death rate regularly varies in the neighborhood of $+\infty$, see Section \ref{fonctions_variation_reguliere} for details.
Indeed, it ensures that the a.s. convergence of Theorem \ref{Behavior_Tn}(i)  holds and these assumptions are then
 essential  to derive the behavior of $X$ from that $(T_n : n\in \mathbb N)$.

\begin{thm}
 \begin{enumerate}[{\normalfont(i)}]
 \item If $\lim_{n\to+\infty}\lambda_n/\mu_n=0$ and $(\mu_n)_n$ regularly varies with index $\rho>1$, we have 
 \begin{equation*}\label{equivalent_Xt_2}
  \lim_{t\to0}\frac{X(t)}{v(t)}=1 \qquad \p_\infty-\text{a.s}.
 \end{equation*}
 
 \item Under assumptions \eqref{condition_limite}, \eqref{condition_croissance} and \eqref{condition_CDI_mu} and if $\mathbb E_{n+1}(T_n)/\Einf{T_n}\to \alpha>0$ as $n\to+\infty$,
 then 
 \begin{equation*}%\label{equivalent_Xt}
  \lim_{t\to0}\frac{X(t)}{v(t)}=1 \qquad \text{in }\p_\infty-\text{probability}.
 \end{equation*}
 \end{enumerate}\label{Theoprincipal}
  \end{thm}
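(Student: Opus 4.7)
\emph{Plan of proof.} For $t>0$, set $N(t):=\min\{X(s):0\leq s\leq t\}$. Because $X$ is skip-free (it jumps only by $\pm 1$) and, by Proposition \ref{defi_P_infini}, enters $\Nat$ instantaneously under $\p_\infty$, the events $\{N(t)\leq n\}$ and $\{T_n\leq t\}$ coincide; in other words, $N(t)$ is the generalized inverse of the decreasing sequence $n\mapsto T_n$, in exact parallel with the definition of $v(t)$. Since $N(t)\leq X(t)$, the plan is to sandwich $X(t)$ between $N(t)$ and the supremum of the current descent excursion, and to prove separately that (a) $N(t)/v(t)\to 1$ and (b) $X(t)/N(t)\to 1$ as $t\to 0$.

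For (a), I invert the convergences of Theorem \ref{Behavior_Tn} using the regular variation of $\phi(n):=\Einf{T_n}$. In case (i), the hypothesis $l=0$ and $(\mu_n)$ regularly varying with index $\rho>1$, combined with Lemma \ref{lemme_moments_tau_n}, give $\Einf{\tau_n}\sim 1/\mu_{n+1}$ and, by standard regular variation calculus, $\Einf{T_n}\sim n/((\rho-1)\mu_n)$; hence $r_n\sim(\rho-1)/n$ is square-summable, so Theorem \ref{Behavior_Tn}(i) yields $T_n/\Einf{T_n}\to 1$ $\p_\infty$-a.s. Writing $n_\pm:=\lfloor(1\pm\eps)v(t)\rfloor$, the regular variation of $\phi$ (index $1-\rho<0$) shows that $\phi(n_-)/t$ and $t/\phi(n_+)$ are bounded away from $1$ as $t\to 0$, so a.s.\ one has $T_{n_-}>t>T_{n_+}$ for $t$ small enough, and therefore $n_-<N(t)\leq n_+$; taking a countable sequence $\eps_k\downarrow 0$ yields $N(t)/v(t)\to 1$ $\p_\infty$-a.s. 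In case (ii), Theorem \ref{Behavior_Tn}(ii) only gives convergence in distribution, but Lemma \ref{equivalents_alpha_non_nul} forces $\Einf{T_n}$ to decay geometrically, so $v(t)\sim\log(1/t)/\log(1/(1-\alpha))\to\infty$; for $n_\pm$ as above the ratio $t/\Einf{T_{n_\pm}}$ behaves like $(1-\alpha)^{\mp\eps v(t)}\to 0$ or $+\infty$, and combined with the tightness of $T_{n_\pm}/\Einf{T_{n_\pm}}$ this yields $N(t)/v(t)\to 1$ in probability.

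For (b), set $S_n:=\sup\{X(s):T_{n+1}\leq s\leq T_n\}$. By the strong Markov property at $T_{n+1}$ and the classical scale-function formula for birth--death chains,
$$\p_\infty(S_n\geq n+1+k)=\p_{n+1}(T_{n+1+k}<T_n)=\Bigl(\sum_{r=0}^{k}\prod_{j=n+1}^{n+r}\frac{\mu_j}{\lambda_j}\Bigr)^{-1},\qquad k\geq 1.$$
In case (ii), $\mu_j/\lambda_j\to 1/l>1$ makes the right-hand side geometrically small in $k$ uniformly in $n$ large; hence $S_n-(n+1)$ is bounded in probability uniformly in $n$, so $(S_n-(n+1))/v(t)\to 0$ in probability as $t\to 0$, and since $X(t)\leq S_n$ on $\{t\in[T_{n+1},T_n)\}$, this gives $X(t)/N(t)\to 1$ in probability. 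In case (i), $\mu_j/\lambda_j\geq 2$ for $j$ large, so taking $k=\lceil\eps(n+1)\rceil$ gives $\p_\infty(S_n\geq n+1+k)\leq 2^{-\eps(n+1)}$, which is summable in $n$; Borel--Cantelli then delivers $(S_n-(n+1))/(n+1)\to 0$ $\p_\infty$-a.s.\ and hence $X(t)/N(t)\to 1$ a.s. Multiplying (a) and (b) concludes both assertions.

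The main obstacle is (b) in case (ii): since the upward drift $\lambda_n/\mu_n$ does not vanish, one must carefully extract from the potential formula a geometric tail with parameter $1-l$ that is uniform in $n\geq n_0$, and this uniformity is exactly what permits the $O(1)$ excursion height to be dominated by the (only logarithmically growing) scale $v(t)\to\infty$.
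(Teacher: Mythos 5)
Your proof is correct and takes a genuinely different route for the excursion-control step. You unify both cases through the single decomposition $X(t)=N(t)+(X(t)-N(t))$ with $N(t)=Y(t)$ (the paper's notation for the running minimum), whereas the paper handles part (ii) by a separate argument that shows $X(t)\in[v(t)-N_0,v(t)+N_0]$ with high probability for a \emph{fixed} integer $N_0$, using estimates on $T_{v(t)\pm N_0}$. The substantive divergence is in (b): the paper introduces $H_n$, the number of upward jumps on $[T_n,T_{n-1})$, derives the recursion \eqref{recurrence_hat_Gn} for its Laplace transform, differentiates to get the second-moment bound \eqref{majoration_moment2_Hn}, and concludes $H_n/n\to0$ a.s.\ by summability. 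Your route through the classical scale-function identity
$$\p_{n+1}(T_{n+1+k}<T_n)=\Bigl(\sum_{r=0}^{k}\prod_{j=n+1}^{n+r}\tfrac{\mu_j}{\lambda_j}\Bigr)^{-1}$$
to obtain a geometric tail for the excursion height $S_n$ is more elementary and has the added merit that the same estimate covers both regimes (pointwise Borel--Cantelli when $l=0$; uniform-in-$n$ tightness when $l\in(0,1)$). Part (a) is in substance the same as the paper's: it is the inversion of the convergence of $T_n/\Einf{T_n}$ through the regular variation (case (i)) or geometric decay (case (ii)) of $n\mapsto\Einf{T_n}$.

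One small but real imprecision to repair in (a), case (ii): tightness of $T_{n_-}/\Einf{T_{n_-}}$ alone does not give the lower bound $n_-<N(t)$. Tightness controls escape to $+\infty$, but you need $\p_\infty\bigl(T_{n_-}/\Einf{T_{n_-}}<\delta\bigr)$ to be small for small $\delta$, i.e.\ that the weak limit $Z$ has no atom at $0$. This is exactly what the paper establishes from \eqref{equation_G} via $\p_\infty(Z=0)\leq\p_\infty(Z_0=0)=\lim_{a\to+\infty}G(a)=0$, and should be invoked explicitly. With that one supplementation, the argument is complete.
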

\noindent 
We refer to Appendix for some examples.
Further, we  remark that if  $\lambda_n=0$ and $\mu_n=n(n-1)/2$, $X(t)$ is the number of blocks of the  Kingman coalescent at time $t$. We recover here from (i) the speed of coming down from infinity obtained for these processes by Aldous in paragraph 4.2. of \cite{Aldous99}:  $tX(t)\underset{t\to 0}{\longrightarrow}2\as.$

 \noindent The extension to  the general case of $\Lambda$-coalescent has been solved   by Berestycki, Berestycki and Limic \cite{Berestiycki_Limic}, but it is not directly included in our work for simultaneous deaths. 
\begin{proof}[Proof of Theorem \ref{Theoprincipal}(i)] First, we notice that the hypotheses of point (i) imply that assumptions \eqref{condition_limite}, \eqref{condition_croissance} and \eqref{condition_CDI_mu} with $l=0$ are all satisfied.
We now prove   that $\sum_{n}\left( \mathbb E_{n+1}(T_n)/\mathbb E_{\infty}(T_n)\right)^2<+\infty$ to get  the a.s. convergence from  Theorem \ref{Behavior_Tn}(i).

\noindent Since $l=0$ and according to Lemma \ref{lemme_moments_tau_n}, $m_n\sim_{n\to +\infty}1/\mu_{n+1}$, which implies that $(m_n)_n$ regularly varies at $+\infty$ with index $-\rho<-1$.  %
Then, according to Lemma \ref{VR_restes_sommes} in Appendix applied to $(1/\mu_n)_n$, we have
$$\frac{\mathbb E_{n+1}(T_n)}{\mathbb E_{\infty}(T_n)}\equinf{n}\left(\mu_{n+1}\sum_{k\geq n+1}\frac{1}{ \mu_k}\right)^{-1}\equinf{n}\frac{\rho-1}{n+1},$$
which entails that
 $\sum_n \left(\frac{\mathbb E_{n+1}(T_n)}{\mathbb E_{\infty}(T_n)}\right)^2<+\infty$. So  
 Theorem \ref{Behavior_Tn}(i) yields
\begin{equation}\label{eq:1822}
\frac{T_n}{\Esp_\infty(T_n)}\linf{n}1\qquad \p_\infty-\textrm{a.s.}
\end{equation}
\medskip
\noindent The proof  is now organized as follows: firstly we consider the a.s. non-increasing process $Y$ defined by
$$Y(t)=n \quad \text{if} \quad t\in [T_{n},T_{n-1})$$
and prove  that this (more regular) process comes down from infinity at speed $v(t)$.
Secondly, we compare the process  $X(t)$  to $Y(t)$ as $t\rightarrow 0$ to get the result.

\noindent 
Thanks to Proposition \ref{VR_inverse}, $v$ regularly varies at $0$ with index $1/(1-\rho)$
and $v(\Esp_\infty(T_n))\sim n$ as $n\to+\infty$. 
Thus, from \eqref{eq:1822} and Lemma \ref{lemme_equivalents} we obtain that almost surely
$$v(T_n)\equinf{n}v(\Esp_\infty(T_n))\equinf{n}n.$$
Adding that  $v$ is non-increasing, we get  a.s. that for every $\eps>0$, there exists $n_0 \in \mathbb N$ such that for $n\geq n_0$
$$1-\eps\leq \frac{n}{v(T_{n})}\leq\frac{n}{v(T_{n-1})}\leq 1+\eps.$$
Let $t< T_{n_0}$ so that $Y(t)>n_0$, then if $\,t\in [T_{n}, , T_{n-1})$,  $$1-\eps \leq \frac{n}{v(T_{n})} \leq  \frac{Y(t)}{v(t)}\leq \frac{n}{v(T_{n-1})}\leq 1+\eps.$$
That ensures 
\begin{equation}
\label{compY}
 \lim_{t\rightarrow 0} \frac{Y(t)}{v(t)}=1 \as
\end{equation}
Let us now check that $X(t)\sim Y(t)$ as $t\rightarrow 0$ by proving that the  heights of the excursions of $X$ between $T_{n}$ and $T_{n-1}$  are negligible compared to $n$. For that purpose, we introduce
the number of birth events between the times $T_n$ and $T_{n-1}$:
$$H_n:=\#\{  s  \in [T_n,T_{n-1}) : X(s)-X(s-)>0\}, \quad n\geq1.$$
For any $t \in [T_n,T_{n-1})$, $Y(t)=n$ and
$ 0\leq X(t)-Y(t) \leq H_n$, so
\begin{equation}\label{encadrement_X_Y}
0\leq\frac{X(t)}{v(t)}-\frac{Y(t)}{v(t)}\leq \frac{H_{Y(t)}}{v(t)}=\frac{H_{Y(t)}}{Y(t)} \frac{Y(t)}{v(t)}.
 \end{equation}
Using (\ref{compY}), we just need to prove that
${H_n}/{n}\rightarrow 0$ a.s. as $n\rightarrow \infty$ to conclude that $X(t)/v(t)\rightarrow 1$ as $t\rightarrow 0$.

\noindent For that purpose, we consider $\widehat G_n(a)=\Einf{\exp(-aH_n)}$ the Laplace transform of $H_n$. In the same vein as we have obtained \eqref{recurrence_Gn}, by applying the strong Markov property at the first time when $X$ jumps after $T_n$, we have the recursion formula
\begin{equation}\label{recurrence_hat_Gn}
\widehat G_n(a)=\frac{\mu_n}{\lambda_n+\mu_n}+\frac{\lambda_n}{\lambda_n+\mu_n}e^{-a}\widehat G_n(a)\widehat G_{n+1}(a),\quad a\geq 0,\ n\geq1.
\end{equation}
Differentiating \eqref{recurrence_hat_Gn} twice at $a=0$, the second moment of $H_n$ satisfies the following recursion formula
\begin{equation}
\frac{\mu_n}{\lambda_n}\Einf{H_n^2}=\Einf{H_{n+1}^2}+1+2\left(\Einf{H_n}
+\Einf{H_{n+1}}+\Einf{H_n}
\Einf{H_{n+1}}\right).\label{recurrence_Hn2}
\end{equation}
Let us prove that the right hand side of the latter is uniformly bounded in $n\geq0$. Notice that $H_n$ equals the number of positive jumps between time $T_n$ and $T_{n-1}$ of a random walk whose transition probabilities are given by
$p_{i,i+1}=\lambda_i/(\lambda_i+\mu_i)$, $ p_{i,i-1}=\mu_i/(\lambda_i+\mu_i)$ for $i\geq 1.$
Since $\lambda_n/\mu_n$ vanishes as $n\to+\infty$, one can choose $n_0$ large enough so that
$$p:=\sup_{n\geq n_0}\lambda_n/(\lambda_n+\mu_n) <1/2.$$
Moreover, for $n\geq n_0$, $H_n$ is stochastically dominated by $T$, the hitting time of $n-1$ by a simple random walk starting at $n$, with probability transitions $(1-p,p)$.
Thus, 
$\sup_{n\geq n_0}\Einf{H_n^2}\leq \Einf{T^2}<\infty$ because $p<1/2$ and the sequences $(\Einf{H_n})_n$ and $(\Einf{H_n^2})_n$ are bounded. It entails that the right hand side of \eqref{recurrence_Hn2} is bounded and
there is $C>0$ such that
\begin{equation}\label{majoration_moment2_Hn}
 \Einf{H_n^2}\leq C\frac{\lambda_n}{\mu_n},\quad n\geq 1.
\end{equation}
Finally, using that $l=0$,
$\Einf{\sum_{n\geq1}\left(\frac{H_n}{n}\right)^2}\leq C\sum_{n\geq1}\frac{1}{n^2}\frac{\lambda_n}{\mu_n}<\infty.$
In particular, $H_n/n$ almost surely goes to $0$ as $n\to+\infty$ and we get the expected convergence. 
\end{proof}
 \begin{proof}[Proof of Theorem \ref{Theoprincipal}(ii)]
From Theorem \ref{Behavior_Tn}(ii), we know that under $\p_\infty$,
$$\frac{T_n}{\Esp_\infty(T_n)}\overset{\textrm{(d)}}{\linf{n}}Z=\sum_{k\geq0}\alpha\left(1-\alpha\right)^kZ_k$$
with $\alpha\in (0,1]$ and where $(Z_k)_k$ is a sequence of i.i.d. random variables whose Laplace transform satisfies \eqref{equation_G}. From this equation, one deduces 
 $\Esp_\infty[Z_0]=1$, which implies $\Esp_{\infty}(Z)=1<\infty$. In particular, $\p_\infty(Z<+\infty)=1.$
Furthermore, using again \eqref{equation_G}, $$\p_\infty(Z=0)\leq\p_\infty(Z_0=0)=\lim_{a\to+\infty}G(a)=0.$$
Hence, for any $\epsilon >0$, there exist $0<A\leq B$ such that $\p_\infty( Z \in [A,B])\geq 1- \epsilon /2$ and for $n$ large enough,
\begin{equation}\label{eq_1520}
\p_\infty(A\leq  T_n/ \Esp_\infty(T_n)  \leq B)\geq 1-\epsilon.
\end{equation}
Moreover, according to \eqref{equivalents_ratios_alpha}, for $N\geq0$,
\begin{equation*}\label{eq:1632}
\frac{\Esp_\infty[T_{n+N}]}{\Esp_\infty(T_n)}\linf{n}(1-\alpha)^N.
 \end{equation*}
Since $0<\alpha\leq1$, there exists $N_0$ such that for all $n,N\geq N_0$,
\begin{equation}\label{eq_1521}
 \Esp_\infty[T_{n+N}]/ \Esp_\infty(T_n)\leq \min(1/(2B),A/2).
 \end{equation}
By the definition \eqref{defi_v} of the function $v$, we have
$$\Esp_\infty\left(T_{v(t)}\right)\leq t<\Esp_\infty\left(T_{v(t)-1}\right).$$
It implies that for any $t>0$ and $N\geq1$,
$$\pinf{T_{v(t)+N}\leq \frac{t}{2}}\geq \pinf{T_{v(t)+N} \leq  \frac{\Esp_\infty\left(T_{v(t)}\right)}{2}}=\pinf{\frac{T_{v(t)+N}}{ \Esp_\infty\left(T_{v(t)+N}\right)} \leq  \frac{ \Esp_\infty\left(T_{v(t)}\right)}{ 2\Esp_\infty\left(T_{v(t)+N}\right)}}.$$
Hence, using \eqref{eq_1520} and \eqref{eq_1521}, there is $N\geq1$ such that for $t$ small enough  
$$\p_\infty(T_{v(t)+N}\leq t/2)\geq\p_\infty \left(\frac{T_{v(t)+N}}{ \Esp_\infty\left(T_{v(t)+N}\right)}   \leq B  \right )\geq 1-\epsilon.$$
We similarly get that for $t$ small enough
$$\p_\infty(T_{v(t)-N}\geq 2t)\geq\p_\infty \left(\frac{T_{v(t)-N}}{ \Esp_\infty\left(T_{v(t)-N}\right)}   \geq A \right )\geq 1-\epsilon.$$
Then, we have
$$\p_\infty(T_{v(t)+N}\leq t/2, T_{v(t)-N}\geq 2t)\geq 1-2\epsilon.$$
It means that
$\p_\infty(   X(t) \in [v(t)-N,v(t)+N])\geq 1-2\epsilon$
and ensures that
$X(t)/v(t)$ tends to $1$ in probability as $t\rightarrow 0$.
\end{proof}

\subsection{Central limit theorem}
We have proved that $X$ satisfies a strong law of large numbers when $l=0$ and $(\mu_n)_n$ regularly varies. 
Under a little stronger assumption, we are now giving a central limit theorem (C.L.T.). 

\begin{thm}\label{TLC_X}
 If $\lim_{n\to+\infty}\frac{\lambda_n}{\mu_n}=0$, $\sum_{n\geq1}\frac{1}{n}\frac{\lambda_n}{\mu_{n}}<+\infty$ and $(\mu_n)_n$ regularly varies with index $\rho>1$, then
\begin{equation}\label{eq:1844}
\sqrt{(2\rho-1)v(t)}\left(\frac{X(t)}{v(t)}-1\right)\underset{t\to0}{\overset{\mathrm{(d)}}\longrightarrow} N,
\end{equation}
where $N$ follows a standard normal distribution.
\end{thm}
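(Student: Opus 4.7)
The plan is to invert the CLT for $T_n$ (Theorem \ref{TLC_Tn}) to obtain a CLT for the monotone staircase $Y(t):=n$ on $[T_n,T_{n-1})$ used in the proof of Theorem \ref{Theoprincipal}(i), and then transfer the result to $X(t)$ via the bound $0\leq X(t)-Y(t)\leq H_{Y(t)}$ on the excursion heights.

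\emph{Step 1: verifying the hypotheses of Theorem \ref{TLC_Tn}.} Since $l=0$ and $\mu_n$ is regularly varying of index $\rho>1$, Lemma \ref{lemme_moments_tau_n} gives $\Vinf{\tau_n}\sim 1/\mu_{n+1}^2$ (regularly varying with index $-2\rho<-1$) and $\Einf{\tau_n^3}\leq C/\mu_{n+1}^3$. Applying Lemma \ref{VR_restes_sommes} to sum the tails yields
$$\Vinf{T_n}\sim \frac{n}{(2\rho-1)\mu_n^2},\qquad \Einf{T_n}\sim \frac{n}{(\rho-1)\mu_n},\qquad \sum_{k\geq n}\Einf{\tau_k^3}=O\!\left(\frac{n}{\mu_n^3}\right).$$
Hence $\Vinf{\tau_n}/\Vinf{T_n}\sim(2\rho-1)/n\to 0$ and the Lyapunov quantity \eqref{hypothese_moment_ordre3} is $O(n^{-1/2})\to 0$, so Theorem \ref{TLC_Tn} applies.

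\emph{Step 2: replacing $X$ by $Y$.} With $H_n$ the number of upward jumps of $X$ between $T_n$ and $T_{n-1}$, we have $0\leq X(t)-Y(t)\leq H_{Y(t)}$, and the bound $\Einf{H_n^2}\leq C\lambda_n/\mu_n$ from \eqref{majoration_moment2_Hn}, together with the new hypothesis $\sum_n \lambda_n/(n\mu_n)<\infty$, gives $\Einf{\sum_n H_n^2/n}<\infty$, whence $H_n/\sqrt{n}\to 0$ almost surely. Combined with $Y(t)/v(t)\to 1$ a.s. (see \eqref{compY}), this yields $(X(t)-Y(t))/\sqrt{v(t)}\to 0$ a.s., so it suffices to prove \eqref{eq:1844} with $Y(t)$ in place of $X(t)$.

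\emph{Step 3 (main obstacle): the inversion.} Set $c(t):=\sqrt{v(t)/(2\rho-1)}$, fix $a\in\R$, and let $n(t):=\lfloor v(t)+ac(t)\rfloor$. Since $\{Y(t)\leq n\}=\{T_n\leq t\}$,
$$\pinf{\frac{Y(t)-v(t)}{c(t)}\leq a}=\pinf{\frac{T_{n(t)}-\Einf{T_{n(t)}}}{\Vinf{T_{n(t)}}^{1/2}}\leq \frac{t-\Einf{T_{n(t)}}}{\Vinf{T_{n(t)}}^{1/2}}}.$$
The crux is to show the right-hand argument converges to $a$. By definition of $v$, $0\leq t-\Einf{T_{v(t)}}\leq m_{v(t)-1}\sim 1/\mu_{v(t)}$. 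Writing $\Einf{T_{v(t)}}-\Einf{T_{n(t)}}=\pm\sum_k m_k$ over an interval of length $|n(t)-v(t)|\sim|a|c(t)=o(v(t))$, and using regular variation of $\mu$ to obtain $m_k\sim 1/\mu_{v(t)}$ uniformly on this interval, one gets $t-\Einf{T_{n(t)}}\sim ac(t)/\mu_{v(t)}$. Combined with $\Vinf{T_{n(t)}}^{1/2}\sim c(t)/\mu_{v(t)}$ (via Step 1 and $n(t)\sim v(t)$), the ratio tends to $a$. Step 1's CLT together with Polya's theorem (which promotes pointwise convergence of the CDFs to a continuous limit to uniform convergence) then yields $\pinf{T_{n(t)}\leq t}\to\Phi(a)$; combined with Step 2 this proves \eqref{eq:1844}. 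The main technical difficulty lies in carrying out these regular-variation expansions uniformly on the shrinking window $[v(t),n(t)]$ and ensuring the error $t-\Einf{T_{v(t)}}$ is negligible compared to the main term $ac(t)/\mu_{v(t)}$.
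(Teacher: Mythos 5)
Your proof is correct and follows essentially the same strategy as the paper: verify the hypotheses of Theorem \ref{TLC_Tn} via Lemma \ref{lemme_moments_tau_n} and Lemma \ref{VR_restes_sommes}, invert the CLT for $T_n$ by working first with the monotone staircase process $Y$, expand $t-\Einf{T_{n(t)}}$ by isolating the defect $t-\Einf{T_{v(t)}}\leq m_{v(t)-1}$ and summing the $m_k$ over a window of width $O(\sqrt{v(t)})$ with regular-variation uniformity, and finally transfer to $X$ via the bound $0\leq X(t)-Y(t)\leq H_{Y(t)}$ and the extra hypothesis $\sum_n\lambda_n/(n\mu_n)<\infty$ giving $H_n/\sqrt n\to0$ a.s. The only differences are cosmetic: you reorder Steps 2 and 3 (the paper does the $Y$-inversion first, then the $X$-to-$Y$ transfer), you absorb the factor $\sqrt{1/(2\rho-1)}$ into the window width $c(t)$ rather than into the target of the convergence, and you make explicit the appeal to P\'olya's theorem (uniform convergence of CDFs) needed because the index $n(t)$ drifts with $t$ — a point the paper leaves implicit.
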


\begin{proof}
 We first prove that under the assumptions of Theorem \ref{TLC_X}, $T_n$ satisfies the C.L.T. stated in Theorem \ref{TLC_Tn}. We have already shown at the beginning of the proof of Theorem \ref{Theoprincipal}(i) that assumptions \eqref{condition_limite}, \eqref{condition_croissance} and \eqref{condition_CDI_mu} hold if 
 $l=0$ and $(\mu_n)_n$ regularly varies.
  It then remains to check that \eqref{hypothese_variances} and \eqref{hypothese_moment_ordre3} are also satisfied.  
  From Lemma \ref{lemme_moments_tau_n}, $\Vinf{\tau_n}\sim 1/\mu_{n+1}^2$ as $n\to+\infty$, which implies that 
   $(\Vinf{\tau_n})_n$ regularly varies with index $-2\rho$.  Then  the fact that $\Vinf{T_n}=\sum_{i=n+1}^{\infty} \Vinf{\tau_n}$ and Lemma \ref{VR_restes_sommes} ensure that $\Vinf{T_n}$ regularly varies with index $1-2\rho$ and we get
  \be
  \label{varTn}
  \Vinf{T_n} \sim \frac{1}{(1-2\rho)\mu_{n+1}^2}.
  \ee
  Therefore we have
  $\,\frac{\Vinf{\tau_n}}{\Var_\infty(T_n)}\equinf{n}\frac{1-2\rho}{n}$,
  which entails \eqref{hypothese_variances}.  
\noindent
By the triangular inequality and the binomial theorem, we have 
$$\Einf{\left|\tau_{n}-\Einf{\tau_n}\right|^3}\leq\Einf{\tau_n^3}+3m_n\Einf{\tau_n^2}+4m_n^3.$$
Thanks to Lemma \ref{lemme_moments_tau_n}, all the terms of the r.h.s. are of order of magnitude $1/\mu_{n+1}^3$ as $n\to+\infty$.  
  Thus, using  again Lemma \ref{VR_restes_sommes} and \eqref{varTn}, there is a positive constant $C'$ such that
  $$ \frac{\sum_{k\geq n}\Einf{\left|\tau_{k}-\Einf{\tau_k}\right|^3}}{\Vinf{T_n}^{3/2}}\leq C'\,\frac{2\rho-1}{(3\rho -1) \sqrt{n}}$$
and vanishes as $n\to+\infty$. So \eqref{hypothese_moment_ordre3}
holds  and $T_n$ satisfies the following C.L.T. 
\begin{equation}\label{TCL_reecriture}
 \widetilde Z_n:=\frac{T_n-\Einf{T_n}}{\sqrt{\Vinf{T_n}}}\overset{\textrm{(d)}}{\linf{n}} N.
\end{equation}

\noindent We now prove that $X$ satisfies the C.L.T. \eqref{eq:1844}. To do so, we first establish a C.L.T. for the process $Y$ where we recall from the proof of Theorem \ref{Theoprincipal}(i) that $Y$ denotes the a.s. non-increasing process defined as $Y(t)=n$ if $t\in[T_n,T_{n-1})$. 
This process is more tractable than $X$ and we will derive the C.L.T. for $X$ from that of $Y$.\\
As  $Y$ is non-increasing, we can follow the proof of C.L.T for   renewal processes (as suggested by Aldous for Kingman's coalescent, cf. \cite{Aldous99}). More precisely, for any $t>0, x\in\R$, we have
 $$\pinf{\sqrt{v(t)}\left(\frac{Y(t)}{v(t)}-1\right)\geq x}=\pinf{Y(t)\geq s_x(t)}=\pinf{T_{s_x(t)}\geq t}$$ 
 where we denote $s_x(t):=\lfloor v(t)+x\sqrt{v(t)}\rfloor$ ($\lfloor\cdot\rfloor$ is the floor function). We then have
 $$\pinf{\sqrt{v(t)}\left(\frac{Y(t)}{v(t)}-1\right)\geq x}=
 \pinf{\widetilde Z_{ s_x(t)}\geq \frac{t-\Einf{T_{s_x(t)}}}{\sqrt{\Vinf{T_{s_x(t)}}}}}.$$
Using \eqref{TCL_reecriture}, we just need to prove that 
 \begin{equation}\label{eq:1747}
 \frac{t-\Einf{T_{s_x(t)}}}{\sqrt{\Vinf{T_{s_x(t)}}}}\underset{t\to0}\longrightarrow x\sqrt{2\rho-1}
 \end{equation}
to obtain the expected C.L.T. for $Y$. 
From the definition \eqref{defi_v} of the function $v$, we have
\begin{equation}\label{encadrement_1515}
\Einf{T_{v(t)}}-\Einf{T_{s_x(t)}}\leq t-\Einf{T_{s_x(t)}}\leq \Einf{T_{v(t)-1}}-\Einf{T_{s_x(t)}}.
\end{equation}
Let us first deal with the l.h.s. and write
$$\Einf{T_n}-\Einf{T_{\lfloor n+x\sqrt n\rfloor}}=\sum_{k=n}^{\lfloor n+x\sqrt n\rfloor-1}m_k\equinf{n}x\sqrt{n}m_n.$$
Indeed,%
\begin{eqnarray*}
 \left|\sum_{k=n}^{\lfloor n+x\sqrt n\rfloor-1}m_k-(\lfloor n+x\sqrt n\rfloor-n)m_n\right| &\leq& m_n\sum_{k=n}^{\lfloor n+x\sqrt n\rfloor-1}\left|\frac{m_k}{m_n}-1\right|\\
 &\leq&x\sqrt{n}m_n\sup_{u\in[0,x]}\left|\frac{m_{\lfloor n+u\sqrt n\rfloor}}{m_n}-1\right|,
\end{eqnarray*}
and  the second part of Lemma \ref{lemme_equivalents} ensures that the latter supremum vanishes as $n\to+\infty$ since 
 $m_n$ regularly varies.  \\
Then, by successively applying Lemma \ref{lemme_equivalents} with $f(y)=y, g(y)=\lfloor y+u\sqrt y\rfloor$ and $h(n)=\Vinf{T_n}$, Lemma \ref{lemme_moments_tau_n}, Lemma \ref{VR_restes_sommes}(i) and \eqref{varTn}, we have the equivalences
\begin{equation*}
 \frac{\Einf{T_n}-\Einf{T_{\lfloor n+x\sqrt n\rfloor}}}{\sqrt{\Vinf{T_{\lfloor n+x\sqrt n\rfloor}}}}\equinf{n}\frac{x\sqrt{n}m_n}{\sqrt{\Vinf{T_n}}}\equinf{n}x\sqrt{2\rho-1}.
\end{equation*}
Following the same steps for the 
 r.h.s of \eqref{encadrement_1515} ensures that \eqref{eq:1747} holds. 
\medskip \noindent We end the proof by deducing \eqref{eq:1844} from the C.L.T. satisfied by $Y$. 
 Indeed, since 
 $$\sqrt{v(t)}\left(\frac{X(t)}{v(t)}-1\right)=\sqrt{v(t)}\left(\frac{Y(t)}{v(t)}-1\right)+\frac{X(t)-Y(t)}{\sqrt{v(t)}},$$
 it is now sufficient to show that the second term of the latter goes to $0$ in probability as $t\to0$.
 Keeping the same notation as in the proof of Theorem \ref{Theoprincipal}(i), from \eqref{encadrement_X_Y}, we almost surely have
 \begin{equation}\label{encadrement_X_Y_2}
 0\leq \frac{X(t)-Y(t)}{\sqrt{v(t)}}\leq \frac{H_{Y(t)}}{\sqrt{Y(t)}}\frac{\sqrt{Y(t)}}{\sqrt{v(t)}}.
 \end{equation}
From \eqref{majoration_moment2_Hn}, there is $C$ such that
$$\Einf{\sum_{n\geq1}\left(\frac{H_n}{\sqrt{n}}\right)^2}\leq C\sum_{n\geq1}\frac{1}{n}\frac{\lambda_n}{\mu_n}.$$
Since this series converges by hypothesis, $H_n/\sqrt{n}$ almost surely goes to $0$ as $n\to+\infty$.
Hence, since we also have $Y(t)\sim v(t)$ as $t\to0$ with probability 1, the r.h.s. of \eqref{encadrement_X_Y_2} vanishes as $t\to0$, which ends up the proof. 
 \end{proof}

\section{Tail distribution at 0 of the extinction time}\label{section_fast_extinction}
In the following result, we focus on  the probability that the extinction  of the process $X$  occurs for small times.
\begin{thm} 
\begin{enumerate}[{\normalfont(i)}]
 \item If for every $n\geq0$, $\lambda_n=0$ (pure death case) and if $(\mu_n)$ regularly varies at $+\infty$ with index $\rho>1$, then 
 $$t\longmapsto- \log \p_\infty(T_0\leq t)$$ regularly varies at $0$ with index $1/(1-\rho)$.
\item If $\lim_{n\to+\infty}\frac{\lambda_n}{\mu_n}=0$ and $(\mu_n)_n$ regularly varies with index $\rho>1$,
$$ \frac{\log \left( - \log \p_{\infty}(T_0\leq t)\right)}{\log t } \underset{t\to0}\longrightarrow\frac{1}{1-\rho} .$$
 \end{enumerate}\label{descente_rapide}
  \end{thm}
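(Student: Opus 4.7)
I would treat the pure death case (i) first, where $T_0$ is an honest sum of independent exponentials with a tractable Laplace transform, and then bootstrap (ii) by a one-sided comparison of Laplace transforms together with a monotone descent event.

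For (i), $T_0=\sum_{n\geq 1}\tau_{n-1}$ under $\p_\infty$ with $\tau_{n-1}\sim\mathrm{Exp}(\mu_n)$ independent, so
$$\phi(a):=\Einf{e^{-aT_0}}=\prod_{n\geq 1}\frac{\mu_n}{\mu_n+a},\qquad \psi(a):=-\log\phi(a)=\sum_{n\geq 1}\log\!\left(1+\frac{a}{\mu_n}\right).$$
The key analytic step is to prove that $\psi$ is regularly varying at $+\infty$ with index $1/\rho\in(0,1)$. I would compare the series to $\int_0^{+\infty}\log(1+a/\mu(x))\,\dif x$ for a monotone continuous extension of $(\mu_n)_n$, change variable $y=\mu(x)$ and use the regular variation of $\mu^{-1}$ at $+\infty$ with index $1/\rho$ (cf.\ Section \ref{fonctions_variation_reguliere}); after identification of a beta-type integral this yields $\psi(a)\equinf{a}\frac{\pi/\rho}{\sin(\pi/\rho)}\,\mu^{-1}(a)$. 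Plugging this into de Bruijn's Tauberian theorem converts it into $t\mapsto-\log\p_\infty(T_0\leq t)$ being regularly varying at $0$ with the conjugate index $(1/\rho)/((1/\rho)-1)=1/(1-\rho)$, which is (i).

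For (ii), the Chernoff upper bound on $\pinf{T_0\leq t}$ rests on the comparison $G_n(a)\leq \mu_{n+1}/(\mu_{n+1}+a)=:G_n^{pd}(a)$ (by an induction on \eqref{recurrence_Gn}), which gives $\psi(a)\geq\psi^{pd}(a)$ and therefore $-\log\pinf{T_0\leq t}\geq \sup_{a>0}(\psi^{pd}(a)-at)$, of order $t^{-1/(\rho-1)}$ up to slowly varying corrections by (i). For the matching lower bound, I would pick $N:=v(t/4)$ so that $\Einf{T_N}\leq t/4$ and Markov gives $\pinf{T_N\leq t/2}\geq\tfrac12$; by the strong Markov property and restriction to the event that the $N$ successive jumps from $N$ are all deaths,
$$\pinf{T_0\leq t}\geq \tfrac12\,\prod_{k=1}^{N}\frac{\mu_k}{\mu_k+\lambda_k}\cdot \p\!\left(\textstyle\sum_{k=1}^N\sigma_k\leq t/2\right),$$
where $\sigma_k\sim\mathrm{Exp}(\mu_k+\lambda_k)$ are independent. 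The stochastic domination $\sigma_k\leq_{st}\mathrm{Exp}(\mu_k)$ together with inclusion give the last probability $\geq \p^{pd}(\tilde T_0\leq t/2)\geq\exp(-Ct^{-1/(\rho-1)}\tilde L(1/t))$ by (i). Since $\lambda_k/\mu_k\to 0$, Cesaro gives $\sum_{k=1}^N\lambda_k/\mu_k=o(N)$; combined with $v(t)\sim t^{-1/(\rho-1)}$ up to slowly varying, this produces $-\log\pinf{T_0\leq t}=O(t^{-1/(\rho-1)}\hat L(1/t))$. Dividing both bounds by $\log t$ closes the sandwich and gives (ii).

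The main obstacle is the regular variation step in (i): proving precisely that $\psi(a)=\sum_{n\geq 1}\log(1+a/\mu_n)$ is regularly varying at $+\infty$ with index $1/\rho$. It is the bridge from the discrete data $(\mu_n)_n$ to the continuous Tauberian machinery, and requires a controlled sum-to-integral comparison relying on the uniform convergence theorem for regularly varying functions recorded in Appendix \ref{fonctions_variation_reguliere}. Once it is secured, de Bruijn's theorem and the monotone descent construction make both (i) and (ii) fall out.
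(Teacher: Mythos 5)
Your proposal is essentially the paper's proof, with two technical variations that are worth flagging but do not change the structure of the argument. In (i) you both start from $\psi(a)=-\log\phi(a)=\sum_n\log(1+a/\mu_n)$ and feed the regular variation of $\psi$ into the exponential Tauberian theorem (Lemma~\ref{resultat_Bingham}), but you establish that regular variation by a sum-to-integral comparison and a beta-type identity $\psi(a)\sim c_\rho\,\mu^{-1}(a)$, whereas the paper splits the sum at the threshold $\mu_i\asymp a\log a$ and treats the two pieces $h_1,h_2$ separately; both routes deliver the index $1/\rho$, and the explicit constant you identify is superfluous to the conclusion (and should read $\pi/\sin(\pi/\rho)$ rather than $(\pi/\rho)/\sin(\pi/\rho)$, a harmless slip). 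In (ii) the sandwich is the same --- monotone-descent restriction for the lower bound and a pure-death comparison for the upper bound --- but you obtain the upper bound by the Laplace-transform domination $G_n(a)\le\mu_{n+1}/(\mu_{n+1}+a)$ read directly off \eqref{recurrence_Gn}, while the paper uses the first holding time at each level $E_i\sim\mathrm{Exp}(\lambda_i+\mu_i)$; more notably, for the lower bound you control $T_{v(t/4)}$ with Markov's inequality alone, where the paper invokes the a.s. law of large numbers of Theorem~\ref{Theoprincipal}(i) to control $X(t/2)$, so your argument is genuinely lighter in that it does not need the full speed-of-descent result. Minor wording point: at the very end you should take $\log$ of both sides before dividing by $\log t$ (as the statement requires), using Lemma~\ref{croissances_comparees}(ii) to kill the slowly varying corrections; you clearly have this in mind, but the sentence ``dividing both bounds by $\log t$'' as written acts one step too early.
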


\noindent In the pure death case (i), the time of extinction is the sum of independent exponential random variables. That allows us to get an explicit expression of its Laplace exponent and the result comes from a Tauberian theorem
(Lemma \ref{resultat_Bingham}). It's a key point where we needed the regular variation of $(\mu_{n})_{n}$. To prove (ii), we first use the speed of coming down from infinity  obtained in Theorem \ref{Theoprincipal} on time interval $[0,t/2]$. Then we compare the trajectory with
the pure death case via a coupling argument and conclude thanks to (i). 
\begin{proof}[Proof of Theorem \ref{descente_rapide}(i)]
Let $\phi$ denote the Laplace transform of $T_0$
$$\phi(a):=\Esp_\infty\left[ \exp(-aT_0)\right],\quad a>0.$$
Let us prove that $\,- \log \phi$ regularly varies with index $1/\rho$ at $+\infty$. Lemma \ref{resultat_Bingham} will thus imply the result.

\medskip \noindent 
In the pure death case, the times $\tau_i$ are independent exponential random variables with respective parameters $\mu_{i+1}$. Then, for $a>0$,
$$\phi(a)=\prod_{i\geq 0} \Esp\left[\exp(-a \tau_i)\right]=\prod_{i\geq 1} \frac{1}{1+a/\mu_i}.$$
Using that $\log(1+a)=a(1+r(a))$ with $r$ satisfying $\lim_{a\to0}r(a)=0$, we write 
\begin{equation*}\label{taylor}
- \log\phi(a)= \sum_{ i : \mu_i \leq a\log a} \log(1+a/\mu_i) + \left(1+R(a)\right)\sum_{ i : \mu_i \geq a\log a} a/\mu_i,
 \end{equation*}
where
$$0\leq R(a)\leq \sup_{y\in[0,1/\log a]}r(y)\linf{a}0.$$
Moreover, since $(\mu_i)_i$ regularly varies with index $\rho$, we know (cf. \cite[Thm 1.5.3]{Bingham1989}) that the increasing sequence $(\inf_{n\geq i}\mu_n)_{i}$ regularly varies with index $\rho$. Therefore, according to Proposition \ref{VR_inverse}, the application
$$a\longmapsto i_a:=\min\{ i \in \mathbb N :   \inf_{n\geq i}\mu_n \geq a \log a\}$$
is regularly varying at $\infty$ with index $1/\rho$.
Then,
$$h_1(a):=\sum_{i : \mu_i \geq  a \log a} \frac{a}{\mu_i}=a\sum_{i\geq i_a} \frac{1}{\mu_i}$$
is regularly varying at $\infty$ with index $1+(1-\rho)/\rho=\rho$ since $\sum_{i\geq i_a}1/\mu_i$ is the composition of the two functions $a\mapsto i_a$ and $n\mapsto\sum_{i\geq n} 1/\mu_i$, which both regularly vary with respective indices $1/\rho$ and $1-\rho$. 

\noindent Furthermore, for $a\geq 1$, we have
$$
h_2(a)=\sum_{i\leq i_a}\log (1+a/\mu_i)=\sum_{i\leq i_a}\left(\log a+ \log\left(\frac{1}{a}+\frac{1}{\mu_i}\right)\right)=
i_a\log a+\sum_{i\leq i_a}\log\left(\frac{1}{a}+\frac{1}{\mu_i}\right).
$$
The second term in the r.h.s. is less than $\sum_{i\geq1}\log\left(1+1/\mu_i\right)$ which is finite since $(\mu_i)_i$ regularly varies with index $\rho>1$.
Hence, $h_2$ regularly varies at $+\infty$ as $i_a$, that is, with index $1/\rho$.

\noindent Putting all the pieces together, $\log \phi=h_1+h_2(1+R)$ is a negative function which regularly varies with index $1/\rho$ at $+\infty$. Then, according to Lemma \ref{resultat_Bingham}, the function
$$t\longmapsto-\log \p_{\infty}(T_0\leq t)$$
regularly varies at $0$ with index $1/(1-\rho)$, which concludes the proof.
\end{proof}

\bigskip

\begin{proof}[Proof of Theorem \ref{descente_rapide}(ii)]
We now suppose that $l=0$ and that $(\mu_n)_n$ regularly varies with index $\rho>1$.
 Let $(E_i)_{i\geq1}$ be a sequence of independent exponential random variables with respective parameters $\lambda_i+\mu_i$.
First, to go from $+\infty$ to $0$, the process $X$ has to reach each integer  level. That gives the upper bound 
\begin{equation}\label{upper_bound}
 \p_{\infty}(T_0\leq t)\leq \p_\infty \left( \sum_{i=1}^{\infty}  E_i \leq t\right),\quad t>0.
\end{equation}
Moreover, for $n\geq1$ and $t\geq0$, we have
\begin{equation*}
\p_n(T_0\leq t)\geq \p_n(X\downarrow)\p_n(T_0\leq t|X\downarrow)
=\prod_{i=1}^{n}\frac{\mu_i}{\lambda_i+\mu_i}\p_n\left(\sum_{i=1}^{n}E_i\leq t\right),
\end{equation*}
where $X\downarrow:=\{\hbox{The process $X$  is non-increasing}\}$. Indeed, conditionally on $X\downarrow$, $(X(t) : t\geq 0)$ is a pure death process with death rates $\mu_n=\lambda_n+\mu_n$.

\noindent For $\eta \in (0,1)$ and $t>0$, we denote $n_{t}:= v(t)(1+\eta)$.
\noindent Hence, for $t>0$, by applying the Markov property, we get the lower bound
{\setlength\arraycolsep{2pt}
\begin{eqnarray}
 \p_\infty(T_0\leq t)&\geq&\p_\infty(X(t/2)\leq n_{t/2})\p_{n_{t/2}}(T_0\leq t/2)\nonumber\\
 &\geq& \p_{\infty}(X(t/2)\leq n_{t/2})\prod_{i=1}^{n_{t/2}}\frac{\mu_i}{\lambda_i+\mu_i} \p_\infty \left(\sum_{i=1}^{n_{t/2}} E_i \leq t/2\right).  \label{lower_bound}
\end{eqnarray}}
Putting together \eqref{upper_bound} and \eqref{lower_bound}, at a logarithmic scale, we obtain
{\setlength\arraycolsep{2pt}
\begin{eqnarray}
-\log\p_\infty \left( \sum_{i=1}^{\infty}  E_i \leq t\right)&\leq&-\log\p_\infty(T_0\leq t)\leq-\log\p_\infty \left( \sum_{i=1}^{n_{t/2}}  E_i \leq t/2\right)\label{inegalites_proba2}\\
 &&\qquad\qquad\qquad\qquad\qquad+\sum_{i=1}^{n_{t/2}}\log\left(1+\frac{\lambda_i}{\mu_i}\right)-\log\p_{\infty}( X_{t/2} \leq n_{t/2}).\nonumber
\end{eqnarray}}
We know from the pure death case that
$$t\mapsto\log\p_\infty \left( \sum_{i=1}^{n_{t/2}}  E_i \leq t/2\right) \textrm{ and }
t\mapsto\log\p_\infty \left( \sum_{i=1}^{\infty}  E_i \leq t\right)$$
both regularly vary at $0$ with common index $1/(1-\rho)$. Moreover,
according to Theorem \ref{Theoprincipal}(i),
$$\log\p_\infty( X(t/2) \leq n_{t/2})=\log \p_\infty\left(\frac{X(t/2)}{v(t/2)}\leq 1+\eta\right)\underset{t\to0}\longrightarrow0.$$
Let us deal with the remaining term of \eqref{inegalites_proba2}, namely $\sum_{i=1}^{n_{t/2}}\log(1+\lambda_i/\mu_i)$. Since $l=0$, $(\lambda_n/\mu_n)_n$ is bounded
and $\sum_{i=1}^{n_{t/2}}\log(1+\lambda_i/\mu_i)\leq C n_{t/2}$ for some $C>0$.
Then, $\sum_{i=1}^{n_{t/2}}\log(1+\lambda_i/\mu_i)$ is upper bounded by a regularly varying function with index $1/(1-\rho)$ since $n(t/2)=v(t/2)(1+\eta)$.

\me
\noindent Plugging our four estimates into  \eqref{inegalites_proba2} ensures that
there exist two slowly varying functions $\underline{l}$ and $\overline{l}$ such that
$$t^{1/(1-\rho)}\underline{l}(t)\leq-\log\p_\infty(T_0\leq t)\leq t^{1/(1-\rho)}\overline{l}(t).$$
So 
for $t<1$
$$\frac{\log \underline{l}(t)}{\log t}\geq\frac{\log(-\log\p_\infty(T_0\leq t))}{\log t}-\frac{1}{1-\rho}\geq \frac{\log \overline{l}(t)}{\log t}.$$
Adding that the right and left hand sides tend to zero according to Lemma \ref{croissances_comparees}(ii), the proof is complete.
\end{proof}

\section{Application to inhomogeneous birth and death processes}\label{application}

Thanks to the previous results, we can estimate the probability of extinction for birth and death processes. An estimation of  the probability of extinction before a small time  
comes from Theorem \ref{descente_rapide}  and an estimation to be extincted after a  large time $t$ can be obtained from the
exponential moments obtained in  Proposition \ref{CDI} by Markov inequality. \\

\noindent As an application, we can now state some asymptotic results for population dynamics  in varying environment.
We consider a time inhomogeneous birth and death process $\,(Z_{t}, t\geq 0)$, associated with a varying environment. The birth and death rates at time $t$ are respectively
$\lambda_k(t)$ and $\mu_k(t)$ when the population size is equal to $k$.
We say that an environment is favorable if the process persists with positive probability in this environment. \\
In case of random environment and under uniform assumptions on the birth and death rates, which make that either all the environments are favorable or all the environments are non favorable, extinction criteria are known, see e.g. Theorem 3.2 in \cite{Torrez1978} and Theorem 3.3 in \cite{Cogburn1981}.\\
Here we consider a case where favorable and non favorable environments can be mixed, successively in time. The assumption below focuses on time intervals of unfavorable environments.

\paragraph{Assumption A.} There exists a sequence of successive and disjoint time intervals $\,([a_i,a_i+t_i), i\in \mathbb{N})\,$ such that  for each $t\in \cup_{i\geq 0}\, [a_i,a_i+t_i)$,
$$\lambda_k(t) \leq  \lambda_k,  \qquad \mu_k(t)\geq \mu_k,$$
where $\lambda_k/\mu_k\to0$ as $k\to+\infty$ and $(\mu_k)_k$ regularly varies with index $\rho>1$.\\

\noindent  Assumption A means that on the successive intervals $[a_i,a_i+t_i)$, the environment is unfavorable and strongly increases the sub-criticality of the process. The intervals  $[a_i,a_i+t_i)$ can be seen as  competition phases. We will show that  under Assumption A and  even if the population is very large at time $a_{i}$, the process can go down to extinction during the time   $[a_i,a_i+t_i)$, even for some  $t_{i}$ tending  to zero  and whatever happens during the phases $]a_i+t_i, a_{i+1}]$. Nevertheless,   the durations $t_{i}$ cannot go to zero  too fast and we provide a quantitative criterion.   Such a framework is  motivated from ecology  by the fact that favorable environments  can alternate with unfavorable environments. It may be due to variations of the climate conditions and the resources available, which can affect
both the natality, the mortality and  the competition. One particular motivating example is a case with linear birth rates  and $\rho>1$ corresponding to a polynomial competition term  ($\rho=2$ yields the logistic competition). See \cite{Sibly22072005} for discussions about the value of $\rho$.\\
\noindent  Let us also remark that the environment may also model the effect of some predation, when the dynamics of the predator  does not depend on the number of preys (generalist predator),  see e.g. \cite{Coffey}.
As a last motivation, we mention the use of inhomogeneous birth and death processes in  epidemiology, see for example \cite{BroekHees,BacaerDads} in the linear case.
The two forthcoming results give the minimal duration of the competition phases which leads to a.s. extinction.

\medskip
\begin{prop} \label{compmin} Under Assumption A, 
if  there exists $\beta <\rho-1$ such that, for $i$ large enough and some constant $c>0$,
\be \label{ti} t_i\geq c/\log^\beta i,\ee
then for every $n\in\mathbb N$,  the process $Z$ becomes extinct  in finite time $\mathbb P_n$-a.s.
\end{prop}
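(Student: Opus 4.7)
The plan is to couple $Z$ on each unfavorable window $[a_i,a_i+t_i)$ with a time-homogeneous birth and death process of rates $(\lambda_k,\mu_k)$ started from $Z(a_i)$, to invoke the fast-extinction estimate of Theorem~\ref{descente_rapide}(ii) window by window, and to finish by a conditional Borel--Cantelli argument across the windows.

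On the event $\{Z(a_i)>0\}$, I would use the standard monotone coupling of birth and death processes (common Poisson sources of potential births and deaths, thinned appropriately) to produce on $[a_i,a_i+t_i)$ a homogeneous process $X^{(i)}$ with rates $(\lambda_k,\mu_k)$, $X^{(i)}(a_i)=Z(a_i)$, and $Z(t)\leq X^{(i)}(t)$ throughout the window. This uses exactly the inequalities $\lambda_k(t)\leq\lambda_k$ and $\mu_k(t)\geq\mu_k$ of Assumption~A. Since $0$ is absorbing, whenever $X^{(i)}$ reaches $0$ before $a_i+t_i$ so does $Z$. Combining this with the fact that $\p_n(T_0^{X}\leq s)$ is non-increasing in $n$ and letting $n\to+\infty$ yields, on $\{Z(a_i)>0\}$,
\begin{equation*}
\p\bigl(Z(a_i+t_i)=0\,\big|\,\mathcal{F}_{a_i}\bigr)\geq q_i,\qquad q_i:=\p_\infty(T_0\leq t_i),
\end{equation*}
a bound that depends neither on $Z(a_i)$ nor on the dynamics outside the windows.

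Under Assumption~A, the sequences $(\lambda_k)_k$ and $(\mu_k)_k$ satisfy the hypotheses of Theorem~\ref{descente_rapide}(ii), so for every $\epsilon>0$ and every $i$ large enough
\begin{equation*}
-\log q_i\leq t_i^{\,1/(1-\rho)-\epsilon}=t_i^{-(1/(\rho-1)+\epsilon)}.
\end{equation*}
Plugging in $t_i\geq c/(\log i)^\beta$ gives $-\log q_i\leq C(\log i)^{\gamma}$ with $\gamma=\beta(1/(\rho-1)+\epsilon)$, and the assumption $\beta<\rho-1$ allows $\epsilon$ to be chosen so that $\gamma<1$. Then $q_i\geq i^{-1/2}$ for $i$ large, hence $\sum_i q_i=+\infty$. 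Since $\{T_0=+\infty\}\subseteq\bigcap_{i\geq 0}\{Z(a_i+t_i)>0\}$, iterated conditioning on the filtrations $(\mathcal{F}_{a_i})_i$ with the deterministic lower bound $q_i$ gives, for every $n,N$,
\begin{equation*}
\p_n\Bigl(\bigcap_{i=0}^{N}\{Z(a_i+t_i)>0\}\Bigr)\leq \prod_{i=0}^{N}(1-q_i)\linf{N}0,
\end{equation*}
which is the desired conclusion.

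The delicate step is the first one: checking that the monotone coupling can actually be maintained throughout an entire time-inhomogeneous window using only the pointwise rate comparisons supplied by Assumption~A (and, implicitly, that $Z$ does not explode before the window, so that $Z(a_i)$ is a bona fide starting point). Once this is granted, the remainder reduces to a direct application of Theorem~\ref{descente_rapide}(ii) and the elementary fact that $\sum_i\exp(-C(\log i)^\gamma)=+\infty$ whenever $\gamma<1$.
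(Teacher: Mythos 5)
Your proof is correct and follows essentially the same route as the paper's: a monotone coupling on each competition window reduces the per-window extinction probability to $\widetilde\p_\infty(T_0\le t_i)$ for the homogeneous process, Theorem~\ref{descente_rapide}(ii) gives the lower bound $\exp\bigl(-(\log^\beta i/c)^{1/(\rho-1)+\eta}\bigr)$ with the exponent $\beta(1/(\rho-1)+\eta)<1$, and the resulting divergent series forces the infinite product $\prod_i(1-q_i)$ to vanish. The paper phrases the coupling more tersely as "Markov property and a monotonicity argument" and writes the conclusion via $\log\p_n(\forall t>0:Z_t>0)\le\sum_i\log(1-q_i)=-\infty$ rather than Borel--Cantelli language, but the substance is identical.
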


\begin{proof} We  denote by $T_0$ the absorbing time of a process with birth rate $\lambda_k$
and death rate $\mu_k$ and $\widetilde \p_{\infty}$ its law issued from infinity (which is well defined since $l=0$ and $(\mu_k)_k$ regularly varies with index $\rho>1$).
By the Markov property and a monotonicity argument, a simple induction yields  for every $n\geq 0$, $k\geq 1$,
$$\p_n(Z_{a_k+t_k}>0)= \mathbb E_n\left(\p_n \left(Z_{a_k+t_k}>0 \left|    \ (Z_s,  s\leq a_k)  \right.\right) \1{Z_{a_{k-1}+t_{k-1}}>0} \right)\leq \prod_{i \leq k } \widetilde \p_{\infty}(T_0> t_i),$$
so that
$$\log  \p_n(\forall t>0 : Z_t>0)\leq \sum_{i\geq 0} \log \widetilde \p_{\infty}(T_0> t_i)= \sum_{i\geq 0} \log \mathbb (1-\widetilde \p_{\infty}(T_0 \leq t_i)).$$
We know from Theorem  \ref{descente_rapide}(ii) that for every $\eta>0$ and for $t$ small enough,
$$\widetilde  \p_{\infty}(T_0\leq t)\geq \exp\left(-t^{-\left(\frac{1}{\rho-1}+ \eta\right)}\right).$$
Since
$\widetilde \p_{\infty}(T_0\leq t_{i})\geq \widetilde \p_{\infty}(T_0\leq c/ \log^\beta i)$, we get that for any $\eta>0$  and $i$ large enough,
$$\widetilde \p_{\infty}(T_0\leq t_{i})\geq \exp\left(-{\left(\frac{\log^\beta i}{c}\right)^{\frac{1}{\rho -1} +\eta}}\right).$$
By hypothesis, there is $\eta$ small enough such that 
$\beta (\frac{1}{\rho-1}+ \eta)<1$. 
Therefore, $\left(\frac{\log^\beta i}{c}\right)^{\frac{1}{\rho -1} +\eta} < \log i\, $ for $\,i\,$ large enough and
$$ \sum_{i\geq 0} \exp\left(-{\left(\frac{\log^\beta i}{c}\right)^{\frac{1}{\rho -1} +\eta}}\right)=\infty.$$
Finally,  $\p_n(\forall t>0 : Z_t>0)=0$, which  ends up the proof.
\end{proof}

\noindent  The result of Proposition \ref{compmin} is completed by the following example where  \eqref{ti} fails and where the process survives with positive probability. Let us define the birth and death process $Z$  as follows. We assume   that $a_{0}=0$ and that  $\lambda_k(t) = \lambda>0$ for any $k$ (for simplicity). Moreover,   
  for every $t\in \cup_{i\geq 0}\, [a_i,a_i+t_i)$,
$\mu_k(t)= \mu_k$   regularly varies with index $\rho>1$ and
 for every $t\in \cup_{i\geq 0} [a_i+t_i,a_{i+1})$, $\mu_k(t)=0.$

\noindent 
We assume that $t_{i}\leq c/\log^\beta i$ with $\beta>\rho-1$.
Let $\eta>0$ be such that $\beta (\frac{1}{\rho-1} -\eta)>1$.
We now prove that the sequence  $(a_i)_{i\geq 1}$ can be chosen such that the  process  $Z$ survives with positive probability.

\noindent   Let $\epsilon_i \in (0,1)$
such that
$\prod_{i\geq 0} (1-\epsilon_i)>0$.
Let $x_0=1$ and for each $i\geq 1$, choose $x_i\in \mathbb N$ such that 
$$\p_{x_i}(T_0 > t_i)\geq 1-(1-\epsilon_i)\p_{\infty}(T_0>t_i),$$
so that for $i$ large enough, by Theorem \ref{descente_rapide}(ii),
$$\p_{x_i}(T_0>t_i)\geq 1- \epsilon_i\exp\left(-t_i^{-(\frac{1}{\rho-1} -\eta)}\right).$$
Then,
$$\prod_{i\geq 0}\p_{x_i}(T_0>t_i) \geq  \prod_{i\geq 0}\left(1- (1-\epsilon_i)\exp\left(-t_i^{-(\frac{1}{\rho-1} -\eta)}\right)\right)>0.$$
Observing that the process $Z$ is a pure birth process during the time intervals $[a_i+t_i,a_{i+1})$, one can choose the times $a_i$ $(i\geq1)$ such that for every $i\geq 0$,
$\p_1(Z_{a_{i+1}-a_i-t_i} \geq x_{i+1})\geq (1-\epsilon_i)$.
By Markov property for every $n\geq 1$, it yields
$$\p_n(\forall t \geq 0 : Z_t >0)\geq    \prod_{i\geq 0}  \p_{x_i}(T_0>t_i) \mathbb P_1(Z_{a_{i+1}-a_i-t_i} \geq x_{i+1})>0$$
and ends up the proof.

\appendix
\section{Regular varying functions}\label{fonctions_variation_reguliere}
In this section, we give several results that deal with regularly varying functions. The interested reader can see \cite{Bingham1989} for more details.
\begin{defi}
\begin{enumerate}[{\normalfont(i)}]
 \item A function $g:[0,+\infty)\to(0,+\infty)$ has \emph{regular variations} at $L\in\{0,+\infty\}$ if there exists $\rho$ such that for all $a>0$,
 $$\lim_{x\to L}\frac{g(ax)}{g(x)}=a^\rho.$$
\item  A sequence of real non-zero numbers $(u_n)_{n\geq0}$ regularly varies if there exists $\rho$ such that for all $a>0$
  $$\lim_{n\to +\infty}\frac{u_{[an]}}{u_n}=a^\rho$$
  where $[\cdot]$ is the floor function.
\end{enumerate}
In both cases,  the real number $\rho$ is called the \emph{index} and  
when $\rho=0$, one says that variations are \emph{slow}. % at $L$.
 \end{defi}
\noindent
According to \cite[Thm. 1.9.5]{Bingham1989}, $(u_n)_n$ regularly varies if and only if the function $x\mapsto u_{[cx]}$ regularly varies at $+\infty$. Then, all the following results that we state for regularly varying functions also hold for regularly varying sequences. \\

 \noindent  Regularly varying functions can be compared with power functions as it is recorded in the following proposition. 
 \begin{lem}\label{croissances_comparees}
  Let $g$ be a slowly varying function at $L\in\{0,+\infty\}$, with index $\rho$.
 \begin{enumerate}[{\normalfont(i)}]
 \item
 For all $\eps>0$, $\lim_{x\to L}g(x)/(x^{\rho-\eps})=L$ and 
  $\lim_{x\to L}g(x)/(x^{\rho+\eps})=1/L$, 
  where we use the convention $1/(+\infty)=0$ and $1/0=+\infty$.
\item $\lim_{x\to L}\log g(x)/\log x=\rho.$
\end{enumerate}
  \end{lem}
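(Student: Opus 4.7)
The plan is to reduce everything to the classical Karamata/Potter bounds for slowly varying functions, then derive both (i) and (ii) from a single pair of uniform estimates. Write $g(x)=x^{\rho}\ell(x)$ where $\ell$ is slowly varying at $L$ (index $0$). Then $g(x)/x^{\rho-\eps}=x^{\eps}\ell(x)$, $g(x)/x^{\rho+\eps}=x^{-\eps}\ell(x)$, and $\log g(x)/\log x = \rho+\log\ell(x)/\log x$, so it suffices to prove
\begin{equation*}
x^{\eps}\ell(x)\to L,\qquad x^{-\eps}\ell(x)\to 1/L,\qquad \log\ell(x)/\log x\to 0
\end{equation*}
as $x\to L$, for every $\eps>0$.

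Treat first the case $L=+\infty$. I would invoke Potter's bound (Theorem 1.5.6 in Bingham--Goldie--Teugels): for any $\delta,\eta>0$ there exists $X_{0}$ such that for all $x,y\geq X_{0}$,
\begin{equation*}
(1-\delta)\min\bigl((x/y)^{\eta},(x/y)^{-\eta}\bigr)\ \leq\ \frac{\ell(x)}{\ell(y)}\ \leq\ (1+\delta)\max\bigl((x/y)^{\eta},(x/y)^{-\eta}\bigr).
\end{equation*}
Fixing $y=X_{0}$ and picking $\eta<\eps$ yields two constants $c_{1},c_{2}>0$ with $c_{1}x^{-\eta}\leq \ell(x)\leq c_{2}x^{\eta}$ for $x\geq X_{0}$. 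Multiplying by $x^{\pm\eps}$ and letting $x\to+\infty$ gives $x^{\eps}\ell(x)\to+\infty$ and $x^{-\eps}\ell(x)\to 0$, which is (i) at $L=+\infty$. Taking logarithms in the same bound gives $-\eta + O(1/\log x)\leq \log\ell(x)/\log x\leq \eta+O(1/\log x)$, and since $\eta$ is arbitrary this proves (ii).

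The case $L=0$ reduces to the previous one via the substitution $\tilde\ell(x):=\ell(1/x)$, which is slowly varying at $+\infty$. Indeed $x^{\eps}\ell(x)=(1/x)^{-\eps}\tilde\ell(1/x)\to 0$ and $x^{-\eps}\ell(x)=(1/x)^{\eps}\tilde\ell(1/x)\to+\infty$ as $x\to 0$ by the previous paragraph, matching the conventions $L=0$, $1/L=+\infty$. For (ii) we have $\log\ell(x)/\log x=-\log\tilde\ell(1/x)/\log(1/x)\to 0$ as $x\to 0$.

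The only real content is the uniform Potter-type bound for slowly varying functions; everything else is algebraic manipulation. I would therefore either cite this bound directly from Bingham--Goldie--Teugels or sketch it via the Karamata representation $\ell(x)=c(x)\exp\!\bigl(\int_{1}^{x}\varepsilon(u)/u\,du\bigr)$ with $c(x)\to c>0$ and $\varepsilon(u)\to 0$. The main obstacle, strictly speaking, is justifying this representation (or equivalently the uniform convergence theorem for slowly varying functions on compact subsets of $(0,\infty)$); since the paper already refers the reader to \cite{Bingham1989} for background, the cleanest route is to quote it and spend the body of the proof on the short deductions above.
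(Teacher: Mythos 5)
Your proof is correct and complete, and it reaches the result by a genuinely (if mildly) different route than the paper. The paper argues directly from the definition of regular variation: for fixed $\eps>0$ and $a>0$, it bounds $g(ax)/(ax)^{\rho-\eps}\geq a^{\eps}g(x)/(2x^{\rho-\eps})$ for $x$ large enough (depending on $a$), and then ``lets $a\to+\infty$'' to conclude $\lim_{x\to+\infty}g(x)/x^{\rho-\eps}=+\infty$; point (ii) and the case $L=0$ are dispatched as ``same argument''. You instead factor $g(x)=x^{\rho}\ell(x)$ and invoke Potter's bound to sandwich $\ell(x)$ between $c_{1}x^{-\eta}$ and $c_{2}x^{\eta}$ for arbitrarily small $\eta$, from which everything follows by elementary manipulation; you then reduce $L=0$ to $L=+\infty$ via $x\mapsto 1/x$. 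The two approaches rest on the same underlying fact — a slowly varying function is eventually dominated by and dominates every positive power — but Potter's theorem is precisely the uniform, quantitative version of that fact, and citing it sidesteps the subtlety the paper glosses over (the threshold ``$x$ large enough'' in $g(ax)\geq g(x)a^{\rho}/2$ depends on $a$, so ``letting $a\to+\infty$'' needs either the uniform convergence theorem or an iteration with fixed $a_{0}$ to be watertight). Your version is therefore slightly more rigorous and more explicit; the paper's is slightly more self-contained in the sense of not quoting an auxiliary theorem. Both derive (ii) from (i) by taking logarithms, as you do.
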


 \begin{proof}
  We prove the first point when $L=+\infty$. By the definition of a regularly varying function, for $a>0$ and $x$ large enough,
  $g(ax)\geq g(x)a^\rho/2$. Then, for $\eps>0$,
  $$\frac{g(ax)}{(ax)^{\rho-\eps}}\geq a^\eps \frac{g(x)}{2x^{\rho-\eps}}$$
  and by letting $a\to+\infty$, we obtain $\liminf_{n\to+\infty}g(x)/(x^{\rho-\eps})=+\infty$.
  
 \noindent   The second point stems from the first point. Indeed, if for instance $L=+\infty$, for all $\eps>0$ and $t$ large enough, we have $t^{\rho-\eps}\leq g(t)\leq t^{\rho+\eps}$ and at a logarithmic scale, we have the result. The proof is the same if $L=0$.
 \end{proof}

\noindent  In the two following results, one sees that the class of regularly varying functions is stable by inversion and summation.
 \begin{lem}{\cite[Thm 1.5.12]{Bingham1989}}\label{VR_inverse}
  Let $g$ be a regularly varying function at $+\infty$ with index $\alpha>0$. Then, the generalized inverse $$g^{-1}(t):=\inf\{s\geq0;\ g(s)\geq t\},\quad t\geq0$$ is well-defined, regularly varies at $+\infty$ with index $1/\alpha$ and as $t\to+\infty$, $$g(g^{-1}(t))\sim g^{-1}(g(t))\sim t.$$
  The same result holds if $g$ regularly varies at 
  $0$ with a negative index and if $g^{-1}(t)=\inf\{s\geq0;\ g(s)\leq t\}$.
 \end{lem}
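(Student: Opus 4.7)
The plan is to reduce everything to the Uniform Convergence Theorem for slowly varying functions and then chase definitions carefully; the only delicate point is that $g$ is not assumed monotone, so the generalized inverse must be handled with some care.

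First I would establish that $g^{-1}$ is well-defined and diverges. Writing $g(x)=x^{\alpha}L(x)$ with $L$ slowly varying and using Lemma \ref{croissances_comparees}(i), we get $g(x)\to+\infty$ as $x\to+\infty$. Hence for every $t$ large enough the set $\{s\geq 0:g(s)\geq t\}$ is non-empty, so $g^{-1}(t)$ is finite, and clearly $g^{-1}(t)\to+\infty$ as $t\to+\infty$.

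Next I would prove the asymptotic inversion relation $g(g^{-1}(t))\sim t$. By definition $g(g^{-1}(t))\geq t$, so one only needs the matching upper bound. Fix $\varepsilon>0$. By the Uniform Convergence Theorem for regularly varying functions, there exists $x_{0}$ such that for $x\geq x_{0}$ and $\lambda\in[1-\varepsilon,1]$ we have $g(\lambda x)\leq (1+\varepsilon)\lambda^{\alpha}g(x)$. Applied to $x=g^{-1}(t)$ and $\lambda=1-\varepsilon$, this gives $g((1-\varepsilon)g^{-1}(t))<t$ eventually (by the infimum definition), and then $g(g^{-1}(t))\leq (1+\varepsilon)(1-\varepsilon)^{-\alpha}t$ for $t$ large. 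Letting $\varepsilon\to 0$ yields $g(g^{-1}(t))\sim t$. The symmetric relation $g^{-1}(g(t))\sim t$ follows by the same argument with the roles of $g$ and $g^{-1}$ reversed, once step three is completed.

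The heart of the proof is then showing that $g^{-1}$ is regularly varying with index $1/\alpha$. Fix $c>0$, set $y_{t}=g^{-1}(t)$, $z_{t}=g^{-1}(ct)$, and $\lambda_{t}=z_{t}/y_{t}$. By the previous step $g(y_{t})\sim t$ and $g(z_{t})\sim ct$, so $g(\lambda_{t}y_{t})/g(y_{t})\to c$. The step I expect to be the main obstacle is showing that $\lambda_{t}$ is asymptotically trapped in a compact subinterval of $(0,+\infty)$, because the Uniform Convergence Theorem only yields $g(\lambda x)/g(x)\to\lambda^{\alpha}$ uniformly on such compacts. To get this, I would use the Potter bounds: for any $\delta>0$ there is $x_{1}$ such that for all $x,\lambda x\geq x_{1}$,
\[
(1-\delta)\min(\lambda^{\alpha-\delta},\lambda^{\alpha+\delta})\leq\frac{g(\lambda x)}{g(x)}\leq (1+\delta)\max(\lambda^{\alpha-\delta},\lambda^{\alpha+\delta}).
\]
Plugging $x=y_{t}$, $\lambda=\lambda_{t}$ and using $g(\lambda_{t}y_{t})/g(y_{t})\to c$, a short calculation forces $\lambda_{t}$ to lie in a fixed compact of $(0,+\infty)$ for $t$ large. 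The Uniform Convergence Theorem on this compact then gives $\lambda_{t}^{\alpha}\to c$, hence $\lambda_{t}\to c^{1/\alpha}$, which is exactly regular variation of $g^{-1}$ with index $1/\alpha$.

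For the final sentence of the lemma, the case where $g$ regularly varies at $0$ with negative index $-\beta<0$ reduces to the previous one by the change of variables $\tilde g(x):=1/g(1/x)$. Then $\tilde g$ regularly varies at $+\infty$ with positive index $\beta$, its generalized inverse $\tilde g^{-1}$ satisfies the conclusions just proved, and one checks directly from the definitions that $g^{-1}(t)=1/\tilde g^{-1}(1/t)$, so all statements transfer.
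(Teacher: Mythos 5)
The paper does not prove this lemma at all; it is a direct citation to Bingham--Goldie--Teugels, Theorem~1.5.12, so there is no in-paper argument to compare against. Your sketch is the standard BGT route (Potter bounds to trap $\lambda_t=g^{-1}(ct)/g^{-1}(t)$ in a compact, then the Uniform Convergence Theorem to pin down the limit), and the core of steps two and three is sound. One slip in step two: you invoke the UCT \emph{upper} bound $g(\lambda x)\leq(1+\varepsilon)\lambda^{\alpha}g(x)$ but the conclusion $g(g^{-1}(t))\leq(1+\varepsilon)(1-\varepsilon)^{-\alpha}t$ actually requires the matching \emph{lower} bound $g(\lambda x)\geq(1-\varepsilon)\lambda^{\alpha}g(x)$ applied with $\lambda=1-\varepsilon$ (combined with the definitional inequality $g((1-\varepsilon)g^{-1}(t))<t$ this gives $g(g^{-1}(t))\leq(1-\varepsilon)^{-1-\alpha}t$). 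The stated inequality points the wrong way; as written, the step does not close, though it is trivially repaired.

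The genuine error is in the last paragraph, the reduction for the second case. If $g$ regularly varies at $0$ with index $-\beta<0$, then $\tilde g(x):=1/g(1/x)$ does \emph{not} have positive index: a direct computation gives $\tilde g(ax)/\tilde g(x)=g(y)/g((1/a)y)$ with $y=1/x\to 0$, and since $g((1/a)y)/g(y)\to(1/a)^{-\beta}=a^{\beta}$, one finds $\tilde g(ax)/\tilde g(x)\to a^{-\beta}$. So $\tilde g$ regularly varies at $+\infty$ with index $-\beta$, not $+\beta$, and the first part of the lemma does not apply to $\tilde g$. The map that turns regular variation at $0$ with negative index into regular variation at $+\infty$ with positive index is $x\mapsto g(1/x)$ (no reciprocal), but then the generalized inverse transforms by a reciprocal-and-sup rather than the clean identity you wrote, and the bookkeeping has to be redone. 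Note also that, as actually used in the paper (for $v(t)=\inf\{n:\Esp_\infty(T_n)\leq t\}$), the relevant situation is $g$ regularly varying at $+\infty$ with negative index and tending to $0$, and there the correct reduction is simply $\tilde g=1/g$, since then $\tilde g^{-1}(u)=g^{-1}(1/u)$ holds exactly from the two infimum definitions.
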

\begin{lem}\label{VR_restes_sommes}
  Let $g$ be a function that regularly varies at $+\infty$ with index $\rho<-1$. Then $R(n) = \sum_{k\geq n}g(k)$ regularly varies with index $\rho+1$ and 
 $$\sum_{k\geq n}g(k) \equinf{n} -\frac{ng(n)}{\rho+1}.$$
 \end{lem}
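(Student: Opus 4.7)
The plan is to reduce the claim to its continuous analogue, which is a direct application of Karamata's theorem, and then to compare the discrete sum with the integral thanks to the uniform convergence theorem for regularly varying functions.

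First, write $g(x)=x^{\rho}L(x)$ with $L$ slowly varying at $+\infty$. Since $\rho<-1$, the classical Karamata theorem for integrals (Theorem 1.5.11 in \cite{Bingham1989}) asserts that $g$ is locally integrable on $[A,+\infty)$ for some $A$, that
$$I(t):=\int_t^{+\infty}g(x)\,\dif x\equinf{t}-\frac{tg(t)}{\rho+1},$$
and that $I$ is itself regularly varying at $+\infty$ with index $\rho+1$.

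Second, I compare $R(n)=\sum_{k\geq n}g(k)$ with $I(n)$. By the uniform convergence theorem for regularly varying functions (Theorem 1.5.2 in \cite{Bingham1989}), the ratio $g(x+u)/g(x)$ converges to $1$ as $x\to+\infty$ uniformly in $u\in[0,1]$. Hence, for every $\varepsilon>0$, there exists $N$ such that for any $k\geq N$ and $x\in[k,k+1]$, $(1-\varepsilon)g(k)\leq g(x)\leq (1+\varepsilon)g(k)$. Integrating on $[k,k+1]$ and summing over $k\geq n\geq N$ yields
$$(1-\varepsilon)R(n)\ \leq\ \int_n^{+\infty}g(x)\,\dif x\ \leq\ (1+\varepsilon)R(n),$$
so that $R(n)\sim I(n)$ as $n\to+\infty$. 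Combining with the first step gives the announced equivalence $R(n)\sim -ng(n)/(\rho+1)$, from which the regular variation of $R$ at $+\infty$ with index $\rho+1$ follows immediately (as $n\mapsto ng(n)$ regularly varies with index $\rho+1$).

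The only delicate point is the sum/integral comparison: one must be sure not to lose control of the tail. The uniform convergence theorem is exactly what is needed and makes the argument above rigorous without requiring any monotonicity of $g$ itself.
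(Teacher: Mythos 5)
Your proof is correct, but you handle the series--integral comparison differently from the paper. The paper first invokes \cite[Thm 1.5.3]{Bingham1989} to replace $g$ by an asymptotically equivalent non-increasing function, and then uses the elementary comparison of a monotone series with its integral ($1 - g(n)/I_n \leq R_n/I_n \leq 1$). You instead bypass monotonicity entirely and compare $g(x)$ to $g(k)$ on each interval $[k,k+1]$ via the Uniform Convergence Theorem \cite[Thm 1.5.2]{Bingham1989}. Both routes then feed into the same Karamata step \cite[Thm 1.5.11]{Bingham1989} to get $I(t)\sim -t g(t)/(\rho+1)$ and the regular variation of the integral.

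Your approach has a mild advantage: the paper's ``without loss of generality, $g$ is non-increasing'' implicitly requires the (true but unstated) fact that replacing $g$ by an equivalent sequence leaves the tail sums $\sum_{k\geq n}g(k)$ asymptotically unchanged, whereas your argument works directly with $g$. One small point you could make more explicit: the UCT gives $g(\lambda x)/g(x)\to\lambda^\rho$ uniformly for $\lambda$ in compacts of $(0,\infty)$, and obtaining $g(x+u)/g(x)\to1$ uniformly in $u\in[0,1]$ requires the short additional observation that $\lambda=1+u/x$ ranges over $[1,1+1/x]\subset[1,2]$ and tends to $1$ uniformly. This is routine but worth a sentence. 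Otherwise the proposal is sound and slightly more self-contained than the paper's version.
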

 \begin{proof}
 We only prove the first point since the proof of the second one uses similar arguments.
   First, since $\rho<-1$, according to Lemma \ref{croissances_comparees}, $\sum_{k\geq0}g(k)$ and $\int_0^{+\infty}g(x)\dif x$ are both convergent. Moreover, 
   thanks to \cite[Thm 1.5.3]{Bingham1989}, any regularly varying function with a negative index is equivalent to a non-increasing function. Then, without loss of generality, one can suppose that $g$ is non-increasing. If $I_n:=\int_n^{+\infty}g(x)\dif x$, since $g$ is now non-increasing, a classical comparison between series and integrals entails that
 $1-\frac{g(n)}{I_n}\leq \frac{ R_n}{I_n}\leq1.$
Using that $g$ regularly varies, according to \cite[Thm 1.5.11]{Bingham1989}, 
  \begin{equation}\label{eq:1536}
   \lim_{n\to+\infty}\frac{ng(n)}{I_n}=-(\rho+1).
   \end{equation}
Hence, from the last two displays, we get $I_n\sim R_n$ as $n\to+\infty$.
 We also see from \eqref{eq:1536} that $I$ regularly varies at $+\infty$ with index $1+\rho$. Since $I$ and $R$ are equivalent, $R$ also regularly varies with the same index.
 \end{proof}

 \noindent We end this section by giving two results that involve regularly varying functions. The second one is a Tauberian theorem, which is a key result in the proof of Theorem \ref{descente_rapide}(i).

 \begin{lem}\label{lemme_equivalents}
Let $x_0\in[0,+\infty]$ and let $f$ and $g$ be two positive functions such that
$$f(x) \underset{x\to x_0}\longrightarrow L\in\{0,+\infty\}, \qquad \frac{f(x)}{g(x)} \underset{x\to x_0}\longrightarrow 1.$$
If $h$ regularly varies  at $L$, then
$$\frac{h(f(x))}{h(g(x))}\underset{x\to x_0}\longrightarrow 1.$$
Moreover, if $f(x)=f(x,t)=g(x)(1+t\eps(x))$ with $\lim_{x\to x_0}\eps(x)=0$, the previous convergence holds uniformly in $t$ in any compact subset of $\R$.
\end{lem}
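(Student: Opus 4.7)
The plan is to reduce both statements to the uniform convergence theorem of Karamata (\cite[Thm.~1.2.1 and 1.5.2]{Bingham1989}): if $h$ regularly varies at $L=+\infty$ with index $\rho$, then $h(\lambda y)/h(y)\to\lambda^{\rho}$ as $y\to+\infty$ \emph{uniformly} for $\lambda$ in any compact subset of $(0,+\infty)$. The case $L=0$ is handled by the change of variable $\widetilde h(y):=h(1/y)$, which regularly varies at $+\infty$ with index $-\rho$, reducing the problem to the previous one.

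For the first claim, I set $\lambda(x):=f(x)/g(x)$, so that $\lambda(x)\to 1$ as $x\to x_0$, and rewrite
\[
\frac{h(f(x))}{h(g(x))}=\frac{h(\lambda(x)\,g(x))}{h(g(x))}.
\]
Fix $\eta\in(0,1)$; since $\lambda(x)\to 1$ there exists a neighborhood $U$ of $x_0$ on which $\lambda(x)\in[1-\eta,1+\eta]$. Since also $g(x)\to L$ (because $f(x)\to L$ and $f/g\to1$), I may apply the uniform convergence theorem on the compact set $[1-\eta,1+\eta]$: for any $\delta>0$, there exists a neighborhood $V\subset U$ of $x_0$ such that
\[
\sup_{\mu\in[1-\eta,1+\eta]}\Bigl|\frac{h(\mu\,g(x))}{h(g(x))}-\mu^{\rho}\Bigr|<\delta,\qquad x\in V.
\]
Taking $\mu=\lambda(x)$ and letting $\eta\to0$ then $\delta\to0$ yields $h(f(x))/h(g(x))\to 1^{\rho}=1$.

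For the uniform-in-$t$ strengthening, I use exactly the same argument with $\lambda(x,t)=1+t\eps(x)$. For $t$ in a compact set $[-M,M]$ and $x$ close enough to $x_0$, we have $\lambda(x,t)\in[1-M|\eps(x)|,1+M|\eps(x)|]\subset[1-\eta,1+\eta]$, so the bound above holds simultaneously for all such $t$, which is exactly the uniform statement. The main conceptual obstacle is thus just invoking the uniform convergence theorem correctly: the only mild care needed is the boundary case $L=0$, where I first pass to $\widetilde h(y)=h(1/y)$ and apply the $+\infty$ version of the theorem before translating back.
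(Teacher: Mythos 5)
Your proof is correct and follows essentially the same route as the paper: both hinge on the uniform convergence theorem for regularly varying functions, writing $h(f(x))/h(g(x)) = h(\lambda(x) g(x))/h(g(x))$ with $\lambda(x)=f(x)/g(x)\to 1$ and invoking uniformity of $h(\lambda y)/h(y)\to\lambda^\rho$ on compact $\lambda$-sets as $y\to L$. You are in fact slightly more careful than the paper, which asserts the uniform bound ``by definition of a regularly varying function'' when it really needs Karamata's Uniform Convergence Theorem, and you also spell out the $L=+\infty$ case and the reduction $L=0\mapsto L=+\infty$ via $\widetilde h(y)=h(1/y)$, whereas the paper only treats $L=0$.
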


 \begin{proof}
We only prove the case $L=0$. Let fix $\eps>0$.
By definition of a regularly varying function, there exist $\eta,\eta' >0$ such that for every $a \in [1-\eta, 1+\eta]$ and $y \in (0,\eta')$,
$$1-\eps \leq \frac{h(ay)}{h(y)} \leq 1+\eps.$$
Furthermore, for $x$ close enough to $x_0$, we have $g(x)\leq \eta'$ and $(1-\eta)\leq f(x)/g(x) \leq (1+\eta)$,
so that $$\left|\frac{h\left(g(x)\cdot \frac{f(x)}{g(x)}\right)}{h(g(x))}-1\right|\leq\eps,$$ which ends up the first part of the proof.
The second part follows in the same way since $1+t\eps(x)$ goes to $1$ uniformly in $t$ in any compact set.
\end{proof}

 \begin{lem}{\cite[Thm. 4.12.9]{Bingham1989}}\label{resultat_Bingham}
 Let $\nu$ be a positive measure on $(0,+\infty)$ whose Laplace transform
 $$\phi(a):=\int_0^\infty e^{-ax}\dif \nu(x)$$
 converges for all $a>0$. Let $\rho<1$.
 Then, $a\mapsto-\log\phi(a)$ regularly varies at $+\infty$ with index $\rho$
 if and only if 
 $x\mapsto-\log\nu(0,x]$ regularly varies at $0$ with index $\rho/(\rho-1)$.
\end{lem}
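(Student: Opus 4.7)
The plan is to recognise the equivalence as a Legendre--Fenchel duality between the tail of $\nu$ near $0$ and the Laplace transform at $+\infty$. Setting $N(x):=\nu(0,x]$ and $\psi(x):=-\log N(x)$, integration by parts yields
$$\phi(a) \;=\; a\int_0^\infty e^{-ax}N(x)\,\dif x \;=\; a\int_0^\infty e^{-(ax+\psi(x))}\,\dif x.$$
Since $\psi$ is decreasing with $\psi(0^+)=+\infty$, Laplace's method suggests $-\log\phi(a)\sim\inf_{x>0}(ax+\psi(x))$, which is the Legendre--Fenchel transform of $-\psi$ at $a$. The core fact I would exploit is that this transform swaps regular variation of index $\rho/(\rho-1)$ at $0^+$ with regular variation of index $\rho$ at $+\infty$, the two indices being Legendre conjugates via $\rho=\beta/(\beta+1)$ with $\beta=\rho/(1-\rho)$. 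I will focus on the main case $\rho\in(0,1)$, the case $\rho\leq 0$ being handled as a limit or by direct arguments since then $\phi(a)\to 1$.

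For the Abelian direction, I would assume $\psi$ regularly varies at $0$ with index $-\beta$, $\beta>0$. Using Karamata's smooth variation theorem, I would replace $\psi$ by an asymptotically equivalent $C^1$ representative whose derivative $-\psi'$ regularly varies at $0$ with index $-\beta-1$. Inverting, the saddle point $x^*(a)$ defined by $-\psi'(x^*(a))=a$ varies regularly at $+\infty$ with index $-1/(\beta+1)$. A standard Laplace split of the integral around $x^*(a)$, exploiting local convexity of $x\mapsto ax+\psi(x)$ together with an estimate of its curvature at the minimiser, should then yield
$$-\log\phi(a)\sim ax^*(a)+\psi(x^*(a)),$$
a quantity regularly varying at $+\infty$ of index $\beta/(\beta+1)=\rho$, as required.

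The converse (Tauberian) direction is where I expect the main obstacle. The trivial bounds $e^{-1}N(1/a)\leq\phi(a)\leq N(x_0)+e^{-ax_0}$, valid for any fixed $x_0>0$, quickly force $-\log N(1/a)$ and $-\log\phi(a)$ to be of comparable order of magnitude, but recovering the precise regularly varying behaviour of $-\log N$ with the right index requires the full strength of de~Bruijn's exponential Tauberian theorem. The monotonicity of $N$ supplies the required Tauberian regularity, and the key technical step is to pass to the convex envelope of $\psi$, apply the Legendre bijection on convex regularly varying functions with conjugate indices $\rho\leftrightarrow\rho/(\rho-1)$, and transport the asymptotic back to $N$ via Karamata-type estimates. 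This convexification-and-inversion step is the delicate part, and is where I would expect to spend most of the effort.
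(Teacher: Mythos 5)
The paper does not prove this lemma at all: it is cited verbatim from Bingham, Goldie and Teugels, Theorem~4.12.9, which is precisely de~Bruijn's exponential Tauberian theorem. So there is no proof in the paper to match your argument against; the authors take the result as a black box.

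Your conceptual framing is correct (the Legendre--Fenchel duality between $\psi(x)=-\log\nu(0,x]$ near $0$ and $-\log\phi$ at $+\infty$, the conjugacy $\rho\leftrightarrow\rho/(\rho-1)$, the saddle point $x^*(a)$ solving $-\psi'(x^*)=a$), and the Abelian half is a sensible sketch: with Karamata's smooth variation theorem one can indeed pass to a $C^1$ representative and carry out the Laplace split around $x^*(a)$; the details (uniform control of the slowly varying factor near the saddle, a lower bound from a single short interval, an upper bound by splitting $(0,\infty)$ at $x^*$ and using monotonicity of $\psi$) are routine but must be written out, since ``Laplace's method suggests'' is not a proof.

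The genuine gap is in the Tauberian direction, and it is a circularity: you write that recovering the regular variation of $-\log N$ ``requires the full strength of de~Bruijn's exponential Tauberian theorem.'' But de~Bruijn's exponential Tauberian theorem \emph{is} the statement you are asked to prove. Invoking it there proves nothing. Moreover that is exactly the half the paper uses: in the proof of Theorem~5.1(i) the authors compute that $-\log\phi$ is regularly varying of index $1/\rho$ at $+\infty$ and then deduce the regular variation of $-\log\p_\infty(T_0\leq t)$ at $0$. Your subsequent sketch (convexify $\psi$, apply a Legendre bijection on convex regularly varying functions, transport back) gestures at the right mechanism, but the Legendre bijection between regularly varying convex functions with conjugate indices is itself the substance of the theorem (it is essentially Kasahara's theorem, Theorem~4.12.7 in the same reference). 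To have an independent proof you would need to establish that bijection directly from the trivial two-sided bounds $e^{-1}N(1/a)\leq\phi(a)$ and $\phi(a)\leq N(x_0)+e^{-ax_0}$, combined with the monotonicity of $N$; as written, the key step is assumed rather than proved.
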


\section{Proof of Technical results} 

We first prove the equivalence of \eqref{condition_CDI_mu} and $S<\infty$ under the assumptions \eqref{condition_limite} and \eqref{condition_croissance}.
\begin{lem}\label{equivalence_CDI}
 The series
\begin{equation}\label{deux_sommes}
S=\sum_{i\geq1}\pi_i+\sum_{n\geq1}\frac{1}{\lambda_n\pi_n}\sum_{i\geq n+1}\pi_i \qquad \text{and} \qquad \displaystyle\sum_{n\geq1}\frac{1}{\mu_n}
 \end{equation}
have the same behavior.
\end{lem}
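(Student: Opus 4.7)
The plan is to reduce everything to comparisons with $\sum 1/\mu_n$ via the telescoping identity
\[
\frac{\pi_i}{\lambda_n \pi_n} \;=\; \frac{1}{\mu_i}\prod_{j=n+1}^{i-1}\frac{\lambda_j}{\mu_j}, \qquad i\geq n+1,
\]
(empty product set to $1$), together with its boundary analogue $\pi_i = \mu_i^{-1}\prod_{j=1}^{i-1}\lambda_j/\mu_j$. By \eqref{condition_limite} one can fix $N \in \mathbb N$ and $r\in(l,1)$ such that $\lambda_j/\mu_j\leq r$ for every $j\geq N$, so that for $n\geq N-1$ and $i\geq n+1$,
\[
\frac{\pi_i}{\lambda_n \pi_n}\;\leq\;\frac{r^{\,i-n-1}}{\mu_i},
\]
and $\pi_i\leq C_N\, r^{i}/\mu_i$ for $i\geq N$, where $C_N$ depends only on the first finitely many ratios $\lambda_j/\mu_j$.

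To prove $\sum_i 1/\mu_i<\infty \Rightarrow S<\infty$ (where $S$ here denotes the sum in \eqref{deux_sommes}), I would handle the tail of the double sum by swapping the order of summation:
\[
\sum_{n\geq N-1}\frac{1}{\lambda_n\pi_n}\sum_{i\geq n+1}\pi_i\;\leq\;\sum_{i\geq N}\frac{1}{\mu_i}\sum_{n=N-1}^{i-1} r^{\,i-n-1}\;\leq\;\frac{1}{1-r}\sum_{i\geq N}\frac{1}{\mu_i}.
\]
Similarly $\sum_{i\geq N}\pi_i\leq C_N\sum_{i\geq N}r^i/\mu_i\leq C_N\sum_{i\geq N}1/\mu_i<\infty$. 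The finitely many residual terms with $n\leq N-2$ are then finite, because each inner sum $\sum_{i\geq n+1}\pi_i$ is bounded by $\sum_i\pi_i<\infty$ and each factor $1/(\lambda_n\pi_n)$ is a positive constant.

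Conversely, $\sum_i 1/\mu_i=\infty \Rightarrow S=\infty$ is immediate: retaining only the $i=n+1$ term of the inner sum gives $\pi_{n+1}/(\lambda_n\pi_n)=1/\mu_{n+1}$, so
\[
S\;\geq\;\sum_{n\geq 1}\frac{\pi_{n+1}}{\lambda_n\pi_n}\;=\;\sum_{n\geq 1}\frac{1}{\mu_{n+1}}.
\]

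The crux of the argument is the uniform geometric rate supplied by \eqref{condition_limite}, which makes the double sum collapse upon interchanging summations. No quantitative use of \eqref{condition_croissance} seems to be required; it is presumably included only to align with the standing hypotheses of the section. The only mildly delicate point is isolating the finite number of leading terms $n<N-1$ (where the geometric bound is not yet in force) and checking their contribution is finite, which follows from $\sum_i\pi_i<\infty$.
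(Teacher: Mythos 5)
Your proof is correct, and it takes a genuinely different route from the paper. The paper derives a termwise two-sided bound
\[
\frac{1}{\mu_{n+1}}\;\leq\;A_n:=\frac{1}{\lambda_n\pi_n}\sum_{i\geq n+1}\pi_i\;\leq\;\frac{M}{(1-l')\,\mu_{n+1}}
\qquad\text{for $n$ large,}
\]
with $l'=(1+l)/2$ and $M=\sup_{k,n\geq1}\mu_n/\mu_{n+k}$, so the upper bound draws explicitly on \eqref{condition_croissance}; the series equivalence then follows from comparison term by term. You instead rewrite $A_n$ via the telescoping ratio $\pi_i/(\lambda_n\pi_n)=\mu_i^{-1}\prod_{j=n+1}^{i-1}\lambda_j/\mu_j$, dominate the product geometrically using only \eqref{condition_limite}, and then apply Tonelli to exchange the order of summation, collapsing the double sum to $\tfrac{1}{1-r}\sum_{i\geq N}1/\mu_i$; the reverse inequality is the same trivial lower bound. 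Your observation that \eqref{condition_croissance} is not used is accurate: the Fubini swap absorbs the shift $\mu_{n+j}\mapsto\mu_i$ that forces the paper to invoke $M$, so your argument establishes the equivalence of the two series under \eqref{condition_limite} alone, a mild strengthening of the lemma as stated. What the paper's route buys, and yours does not, is the pointwise estimate $A_n\asymp 1/\mu_{n+1}$ — a strictly stronger conclusion than series-equivalence — though in the paper that kind of estimate is actually sourced from Lemma~\ref{lemme_technique} rather than from this lemma, so your alternative is a clean substitute here.
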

\begin{proof}  First, according to \eqref{condition_limite}, as $n\to+\infty$, $\lambda_n/\mu_{n}\to l<1$ and the first term of the r.h.s. of \eqref{deux_sommes} converges.
  It remains to study the convergence of the series $\sum_{n\geq1}A_n$ where for $n\geq1$,
  $$A_n:=\sum_{j\geq1}\frac{\lambda_{n+1}\cdots\lambda_{n+j-1}}{\mu_{n+1}\cdots\mu_{n+j}}.$$
We have $A_n\geq1/\mu_{n+1}$ since it is the first term of the sum.
Moreover, according to assumption \eqref{condition_limite}, for $n$ large enough, we have
$\lambda_n / \mu_n\leq l':=(1+l)/2$ and
$$A_n\leq\sum_{j\geq1}l'^{j-1}\frac{1}{\mu_{n+j}}\leq\frac{1}{1-l'}\frac{M}{\mu_{n+1}}$$
where $M=\sup \{\mu_{n}/ \mu_{j+n} :j,n\geq1 \}$ is finite thanks to assumption \eqref{condition_croissance}. Putting all pieces together, for $n$ large enough, we have 
$$\frac{1}{\mu_{n+1}}\leq A_n\leq \frac{1}{1-l'}\frac{M}{\mu_{n+1}}$$ and the series $\sum_n\frac{1}{\mu_{n}}$ and $\sum_n A_n$ have the same behaviors.
\end{proof}

\noindent Now we consider   a bounded sequence  $(u_n)_{n\geq1}$ satisfying for every $n\geq 1$
 \begin{equation}\label{recurrence}
u_n=a_nu_{n+1}+b_n, \qquad \text{where} \quad a_n\geq 0, \quad \limsup_{n\to+\infty}a_n<1, \quad b_n>0.
 \end{equation}
We prove the following result used in particular in our work to control the moments of $\tau_n$.

\medskip
\begin{lem}\label{lemme_technique}
Assuming \eqref{recurrence}, then for every  $n\geq1$,
\begin{equation}\label{formule u_n}
u_n=\sum_{i\geq n}b_i\prod_{j=n}^{i-1}a_j \quad \hbox{ (where by convention }  \prod_{j=n}^{n-1}a_j =1).
\end{equation}
Moreover, if $\sup \{ b_{n+k}/b_n : k,n\geq1\}<\infty$,
there exists  $C>0$ such that for every  $n\geq1$
 \begin{equation}\label{encadrement}
 b_n\leq u_n\leq C b_n \quad \hbox{ and thus } \qquad \sup_{k,n\geq1}\frac{u_{n+k}}{u_n}<\infty.
  \end{equation}
If in addition we  assume that $a_n\linf{n}0$, then 
$\ u_n\equinf{n}b_n.$
\end{lem}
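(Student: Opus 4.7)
The plan is to first derive \eqref{formule u_n} by iterating the recursion and controlling the remainder using the boundedness of $(u_n)$ together with $\limsup a_n < 1$; the two-sided bound \eqref{encadrement} and the asymptotic equivalence will then follow by comparing the series representation to a geometric sum, using the growth control on $(b_n)$ and on $(a_n)$ respectively.

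Concretely, iterating $u_n = a_n u_{n+1} + b_n$ exactly $N$ times yields
\[
u_n = \sum_{i=n}^{n+N-1} b_i \prod_{j=n}^{i-1} a_j + u_{n+N} \prod_{j=n}^{n+N-1} a_j.
\]
Pick $l' \in (\limsup_n a_n , 1)$ and $n_0$ such that $a_n \leq l'$ for all $n \geq n_0$. For $n \geq n_0$, the remainder term is bounded by $(\sup_k u_k)(l')^{N-n_0}$ and vanishes as $N \to \infty$; the partial sums are non-decreasing and bounded, so they converge to a finite limit, giving \eqref{formule u_n} for $n \geq n_0$. For $n < n_0$, applying finitely many more iterations of the recurrence propagates the formula down to $n=1$.

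For \eqref{encadrement}, the lower bound $b_n \leq u_n$ is obtained by keeping only the $i=n$ term of \eqref{formule u_n}. For the upper bound, with $M := \sup_{n,k\geq 1} b_{n+k}/b_n < \infty$ and the geometric control $\prod_{j=n}^{i-1} a_j \leq (l')^{i-n}$ valid for $n \geq n_0$, we get
\[
u_n \leq M b_n \sum_{i\geq n} (l')^{i-n} = \frac{M}{1-l'}\, b_n.
\]
Since only finitely many $n < n_0$ remain, and for those $u_n$ is bounded while $b_n > 0$, enlarging the constant $C$ takes care of them; the relation $\sup_{n,k} u_{n+k}/u_n < \infty$ is then immediate from $u_{n+k} \leq C b_{n+k} \leq CM b_n \leq CM u_n$.

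Finally, under the extra assumption $a_n \to 0$, for any $\varepsilon > 0$ one has $a_n \leq \varepsilon$ eventually, and the same geometric argument gives $u_n - b_n \leq \frac{M\varepsilon}{1-\varepsilon}\, b_n$ for $n$ large, which yields $u_n \sim b_n$. The only delicate point is the remainder in the iteration: this is precisely why one needs $\limsup a_n < 1$ rather than the weaker $a_n < 1$ for every $n$, together with the global boundedness of $(u_n)$ — otherwise the tail $u_{n+N} \prod a_j$ need not vanish.
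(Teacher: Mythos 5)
Your proof is correct and follows essentially the same approach as the paper: iterating the recursion and killing the remainder via $\limsup a_n<1$ and boundedness of $(u_n)$, then bounding the resulting series by a geometric sum using $\sup_{n,k} b_{n+k}/b_n<\infty$. The only cosmetic differences are that you handle the finitely many indices $n<n_0$ explicitly (the paper leaves the constant enlargement implicit) and you close the final equivalence with a direct geometric estimate where the paper invokes dominated convergence; both are equally valid.
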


\medskip
\begin{proof}
By the recurrence property \eqref{recurrence}, for all $N\geq n\geq1$,
\begin{equation}\label{reccurence2}
 u_n=\sum_{i=n}^Nb_i\prod_{j=n}^{i-1}a_j+u_{N+1}\prod_{j=n}^Na_j.
 \end{equation}
Since $\limsup a_n<1$ and $(u_n)_n$ is bounded, the second term of the r.h.s. of \eqref{reccurence2} vanishes as $N\to+\infty$ and \eqref{formule u_n} is proved.

\noindent We now suppose $K:=\sup \{ b_{n+k}/b_n : k,n\geq1\}<+\infty$. Moreover $\limsup_n a_n<1$ ensures that there exists  $\eps>0$ and $n_0$ such that for $n\geq n_0$, $a_n\leq 1-\eps$.
So writing \eqref{formule u_n} as
\begin{equation}\label{eq:12}
u_n=b_n\left(1+\sum_{i\geq1}\frac{b_{i+n}}{b_n}\prod_{j=n}^{i+n-1}a_{j}\right),
\end{equation}
 Then, for $n$ large enough,
$$b_n\leq u_n\leq b_n\left(1+K\sum_{i\geq1}(1-\eps)^{i-1}\right),$$
which ends the proof of \eqref{encadrement}.
If we also have $\lim_{n\to+\infty}a_n=0$, we obtain $u_n\sim b_n$ as $n\to+\infty$ from \eqref{eq:12} thanks to the dominated convergence theorem.
\end{proof}

\section{Examples}\label{Exemples}
In this paragraph, we give examples that fulfill the Assumptions \eqref{condition_limite}, \eqref{condition_croissance}  and \eqref{condition_CDI_mu}. They   illustrate the law of large numbers 
and central limit theorems of this paper. Special attention is payed to the examples motivated by population dynamics, such as Example $1$.
For these  motivations and convenience, we assume here  that the birth  rate satisfies $\lambda_n\leq C. n$ for some constant $C>0$ and every $n\in \mathbb N$. It captures the linear branching rate and 
allows for example to take into account 
cooperation for small populations, as Allee effect. We are also focusing on the case  $l=0$, which means that the death rate prevails for large population  owing to 
the  competition. Let us note from   Lemma \ref{lemme_moments_tau_n}
that   such assumptions ensure that   $\Einf{\tau_n}\sim 1/\mu_{n+1}$ as $n\to+\infty$. 
\\

\begin{example} We assume that $\mu_n=n^\rho\log^\gamma n$ with either $\rho>1$ or $\rho=1$ and $\gamma>1$. This death rate regularly varies with index $\rho$, so
 that  the  almost sure convergence   of Theorem \ref{Behavior_Tn} holds.  Thus, $T_n$ satisfies a strong law of large numbers with speed% and the C.L.T. stated in Proposition \ref{TLC_X} holds. 
$$\Einf{T_n}\equinf{n}\sum_{k\geq n+1}\frac{1}{k^\rho\log^\gamma k}\equinf{n}\left\{
\begin{array}{ll}
\frac{1}{(\rho-1)n^{\rho-1}\log^\gamma n}&\textrm{ if }\rho>1\\
\frac{1}{(\gamma-1)\log^{\gamma-1} n}&\textrm{ if }\rho=1
\end{array}\right..$$
Moreover, since $\Vinf{T_n}\sim 1/[(2\rho-1)n^{2\rho-1}\log^{2\gamma} n]$ as $n\to+\infty$,
according to Theorem \ref{TLC_Tn}, $T_n$ satisfies the C.L.T.
\begin{gather*}
\frac{\sqrt{2\rho-1}}{\rho-1}\sqrt{n}\left\{(\rho-1)n^{\rho-1}\log^\gamma n\, T_n-1\right\}\overset{\textrm{(d)}}{\linf{n}} N\quad (\rho>1)\\
\frac{\sqrt{n}\log n}{(\gamma-1)}\left\{(\gamma-1)\log^{\gamma-1} n \,T_n-1\right\}\overset{\textrm{(d)}}{\linf{n}} N\quad (\rho=1).
\end{gather*}

\noindent 
Concerning the asymptotic behavior of $X(t)$ as $t\to0$, when $\rho>1$, $v$ regularly varies at $0$ with index $1/(1-\rho)$ and is generally not explicit. However, if $\gamma=0$, that is, if $\mu_n=n^\rho$, we have
$v(t)\sim \left((\rho-1)t\right)^{1/(1-\rho)}$ as $t\to0$.
\end{example}

\begin{example}
Let us illustrate the regimes (ii) of Theorems \ref{Behavior_Tn} and \ref{Theoprincipal}.

\medskip \noindent If $\mu_n=(n!)^\gamma$ with $\gamma>0$, $\Einf{T_n}\sim ((n+1)!)^{-\gamma}$. Hence, $\lim_{n\to+\infty} \Einf{\tau_n}/\Einf{T_n}=1$ and Theorem \ref{Behavior_Tn}(ii) yields
$$((n+1)!)^\gamma T_n\overset{\textrm{(d)}}{\linf{n}}E$$ where $E$ is exponential with parameter $1$.

\medskip \noindent If  $\mu_n=e^{-\beta n}$ with $\beta>0$, $\Einf{T_n}\sim e^{-\beta(n+1)}/(1-e^{-\beta})$. Thus, the conditions of Theorem \ref{Behavior_Tn}(ii) hold true with $\alpha=1-e^{-\beta}$ and
$$e^{\beta(n+1)} T_n\overset{\textrm{(d)}}{\linf{n}}\sum_{k\geq0}e^{-\beta k}E_k$$ where the $E_k$'s are i.i.d. exponential with parameter $1$.
In that case, we can explicit the speed $v$ of Theorem \ref{Theoprincipal} and  we get 
$\,X(t)\sim-(\log t)/\beta\,$ as $\,t\to0$,  in probability.
\end{example}

\begin{example} \label{ctr}
In Theorem \ref{Behavior_Tn}, we did not consider the case where $r_n=\Einf{\tau_n}/\Einf{T_n}$ does not converge. Then one can (only) state  analogous results along the convergent
subsequences.
For instance, if $\mu_{2n}=\mu_{2n+1}=3^{-2n}$, we have 
$$r_{2n}\linf{n}\frac{4}{9} \quad \textrm{ and } \quad r_{2n+1}\linf{n}\frac{4}{5}.$$
Theorem \ref{Behavior_Tn}(ii) still holds in that case, but  for the two subsequences $(T_{2n}/\Einf{T_{2n}})_n$ and $(T_{2n+1}/\Einf{T_{2n+1}})_n$, which converge in distribution to different limits.

\medskip \noindent One can also find examples where $0=\liminf r_n<\limsup r_n$. Then, $(T_n)_n$ has two subsequences satisfying the two regimes (i) and (ii) of Theorem \ref{Behavior_Tn}.
\end{example}

\begin{example}\label{contre_exemple} 
In this last example, we exhibit a sequence of death rates $(\mu_n)_n$ such that the law of large numbers of Theorem \ref{Behavior_Tn}(i) holds in probability but not almost surely.\\
For that purpose, we set $\mu_n=\exp(n/\log n)\log n$. One can check that  $l=0$ and  
$$\Einf{\tau_n}\sim 1/\mu_{n+1}, \qquad \Einf{T_n}\sim S(n+1):=\sum_{k\geq n+1}1/\mu_k$$ as $n\to+\infty$. Moreover, as
 $\mu_n$ is non-decreasing, 
 \begin{equation}%\label{iam}
 \int_n^\infty\frac{e^{-x/\log(x)}}{\log x}\dif x\leq  S(n)\leq \int_n^\infty\frac{e^{-x/\log(x)}}{\log x}\dif x+\frac{e^{-n/\log(n)}}{\log n}. \nonumber
  \end{equation}
and
 $$\int_n^\infty\frac{e^{-x/\log(x)}}{\log x}\dif x\equinf{n}\int_n^\infty\left(\frac{1}{\log x}+\frac{1}{(\log x)^2}\right)e^{-x/\log(x)}\dif x=e^{-n/\log(n)}.$$
Combining the two last displays and recalling $r_n=\Einf{\tau_n}/\Einf{T_n}$, we have 
 $$ S(n)\sim \exp(-n/\log n), \qquad
r_n\sim 1/\log n, \qquad r_n\to 0,  \qquad \sum_nr_n^{2}=\infty,$$ 
so that   $T_n/S(n+1)$ goes to $1$ in probability but the almost sure convergence is not guaranteed.

 \noindent Indeed, let us assume now  that  $V_n:=T_n/ S(n+1)$ does converge a.s. toward $1$ and   find a contradiction. We have
 \begin{equation}\label{eq:1658}
 V_{n+1}-V_n=V_{n+1}\left(1-\frac{ S(n+2)}{ S(n+1)}\right)-\frac{\tau_{n}}{ S(n+1)}.
   \end{equation}
By hypothesis, the left hand side of the latter a.s. vanishes as $n\to+\infty$. Moreover, simple computations leads to $S(n+1)/S(n)\to1$ and the first term in 
the r.h.s. of the last display a.s. goes to $0$ since our assumption implies that $V_n$ is bounded a.s.
Hence, putting all pieces together, the term $\tau_{n}/S(n+1)$ of \eqref{eq:1658} has to go to $0$ a.s.\\
 To get a contradiction thanks to Borel-Cantelli's Lemma, it suffices to prove that for $\eps$ small enough, 
$$\sum_{n\geq0}\pinf{\tau_{n} / S(n+1)>\eps}=\infty,$$ 
recalling  that  the r.v. $\tau_n$ are independent.
By a coupling argument, $\tau_{n}$ stochastically dominates $\hat\tau_{n}$ the hitting time of $n$ by a pure death process with death rates $(\mu_{k})_{k}$ and
 starting at $n+1$. In other words, $\tau_n$ is stochastically larger than the    exponential r.v. $\hat \tau_{n}$ with parameter $\mu_{n+1}$.
 Then,
 $\pinf{\tau_{n}/S(n+1)>\eps}\geq \exp(-\eps\mu_{n+1} S(n+1))$. Thanks to previous computations, $\mu_nS(n)\sim\log n$ as $n\to+\infty$. Then, there is $C>0$ such that
 $$\pinf{\tau_{n}/ S(n+1)>\eps}\geq e^{-\eps C\log n}=\frac{1}{n^{C\eps}},$$
 which completes the proof because  $\sum_{n\geq0}\p\left(\tau_{n+1}/ S(n)>\eps\right)$ is infinite as soon as $\eps$ is small enough.
\end{example}

$\newline$

\textbf{Acknowledgement.} 
This work  was partially funded by Chaire Mod\'elisation Math\'ematique et Biodiversit\'e VEOLIA-\'Ecole Polytechnique-MNHN-F.X., the professorial chair Jean Marjoulet, the project MANEGE `Mod\`eles
Al\'eatoires en \'Ecologie, G\'en\'etique et \'Evolution'
09-BLAN-0215 of ANR (French national research agency).

\end{document}